\newtheorem{theorem}{Theorem}[section]
\newtheorem{lemma}[theorem]{Lemma}
\newtheorem{remark}[theorem]{Remark}
\newtheorem{example}{Example}[section]
\newtheorem{corollary}[theorem]{Corollary}
\newtheorem{definition}{Definition}[section]
\newtheorem{proposition}[theorem]{Proposition}
\newtheorem{assumption}{Assumption}[section]
\numberwithin{equation}{section}
\newcommand{\qed}{\hfill\rule{2mm}{3mm}\vspace{4mm}}
\def\qed{\hfill $\square$ \bigskip}
\def\beq{\begin{equation}}               
\def\eeq{\end{equation}}                 
\def\bea{\begin{eqnarray}}             
\def\eea{\end{eqnarray}}               
\def\be*{\begin{eqnarray*}}             
\def\ee*{\end{eqnarray*}}               
\def\ba{\begin{array}}                  
\def\ea{\end{array}}                    
\def\;{\vspace{3mm} \\ }
\def\R{\mathbb{R}}
\def\P{\mathbb{P} }
\def\~{\widetilde}
\def\beqlb{\begin{eqnarray}} \def\eeqlb{\end{eqnarray}}
\def\beqnn{\begin{eqnarray*}} \def\eeqnn{\end{eqnarray*}}
\def\<{\langle}  \def\>{\rangle}
\def\bde{\begin{definition}}
\def\ede{\end{definition}}
\def\bth{\begin{theorem}}
\def\eth{\end{theorem}}
\def\ble{\begin{lemma}}
\def\ele{\end{lemma}}
\def\bcor{\begin{corollary}}
\def\ecor{\end{corollary}}
\def\pf{\noindent{\bf Proof.} }
\def\qed{{\hfill $\Box$ \bigskip}}
\begin{document}

\title
{\Large\bf Weak extinction versus global exponential growth of total mass for superdiffusions
}

\author{ \bf J\'{a}nos Engl\"{a}nder, \hspace{1mm}\hspace{1mm} Yan-Xia Ren
\footnote{The research of this author is supported by NSFC
(Grant No. 10971003, 11271030 and 11128101) and
Specialized Research Fund for the Doctoral Program of Higher Education.
\hspace{1mm} } \hspace{1mm}\hspace{1mm} and
Renming Song\thanks{Research supported in part by a
grant from the Simons Foundation (208236).}
\hspace{1mm} }
\date{}
\maketitle

\begin{abstract}
Consider a superdiffusion $X$ on $\mathbb R^d$ corresponding to
the semi-linear operator $\mathcal{A}(u)=Lu+\beta u-ku^2,$ where
$L$ is a second order elliptic operator, $\beta(\cdot)$ is in the Kato class,
and $k(\cdot)\ge 0$ is bounded on compact subsets of $\R^d$
and is positive on a set of positive Lebesgue measure.

The main purpose of this paper is to complement the results obtained in \cite{Englander:2004},
in the following sense.
Let $\lambda_\infty $ be  the $L^\infty$-growth bound of the
semigroup corresponding to the Schr\"odinger-type operator $L+\beta $.
If $\lambda_\infty \neq0$, then we prove that, in some sense,  the exponential growth/decay
rate of $\|X_t\|$, the total mass of $X_t$, is $\lambda_\infty $. We also describe the
limiting behavior of $\exp(-\lambda_\infty t)\|X_t\|$, as $t\to\infty$, in these cases. This should be
compared to the result in \cite{Englander:2004}, which says that the generalized principal eigenvalue $\lambda_2$ of
the operator gives the rate of {\it local} growth when it is positive, and implies local
extinction otherwise. It is easy to show that $\lambda_{\infty}\ge \lambda_2$, and we
discuss cases when $\lambda_{\infty}> \lambda_2$ and when $\lambda_{\infty}= \lambda_2$.

When $\lambda_\infty =0$, and under some conditions on $\beta$,  we give a sufficient
and necessary condition for the
superdiffusion  $X$ to exhibit weak extinction. We show that the branching intensity $k$
affects  weak extinction; this should be compared to the known result that $k$ does not
affect weak {\it local} extinction. (The latter  depends on the sign of $\lambda_2$ only,
and it turns out to be equivalent to local extinction.)
\end{abstract}
\tableofcontents

\medskip
\noindent {\bf AMS Subject Classifications (2000)}: Primary 60J80;
Secondary 60G57, 60J45

\medskip

\noindent{\bf Keywords and Phrases}: superdiffusion, superprocess,
measure-valued process, gauge theorem, Kato class, growth bound, principal eigenvalue,
$h$-transform, weak extinction, total mass.

\bigskip\bigskip

\baselineskip=6.0mm

\section{Introduction}
\setcounter{equation}{0}
\subsection{Model}
For any positive integer $i$ and $\eta\in (0, 1]$,
let $C^{i,\eta}(\R^d)$ denote the space of $i$ times
continuously differentiable functions with all their $i$-th order
derivatives belonging to $C^{\eta}(\R^d)$. (Here $C^{\eta}(\R^d)$
denotes the usual H\"older space.) For any
$x\in \R^d$, we will use $\{\xi_t, \Pi_{x}, t\ge
0\}$ to denote  the $L$-diffusion  with $\Pi_{x}(\xi_0=x)=1$, where
$$
L:=\frac{1}{2}\nabla\cdot
a\nabla+b\cdot\nabla\quad \mbox{ on }\R^d,
$$
and $a, b$ satisfy the following
\begin{description}
\item{(1)} the symmetric matrix $a=\{a_{i,j}\}$  satisfies
$$
A_1|v|^2\le\sum^d_{i,j=1}a_{i,j}(x)v_iv_j\le A_2|v|^2,\quad\mbox{ for
all }v\in\R^d\mbox{ and }x\in \R^d
$$
with some  $A_1, A_2>0$, and
 $a_{i,j}\in C^{1,\eta}, i,j=1,\cdots,d,$ for
some $\eta$ in $(0,1]$;
\item{(2)} the coefficients $b_i$, $i=1, \cdots, d$, are measurable functions satisfying
$$
\sum^d_{i=1}|b_i(x)|\le C(1+|x|), \qquad \mbox{ for
all } x\in \R^d
$$
with some $C>0$;
\item{(3)} there
exists a differentiable function $Q:\R^d\to \R$ such that $b=a\nabla
Q$.
\end{description}

\begin{remark}\label{conserv}\rm
Under (1)--(3) above, the diffusion process $\xi$ is {\it conservative} on $\R^d$. That is,
$$\Pi_x\left(\xi_t\in\R^d,\ \forall t>0\right)=1,$$
for all $x\in\R^d$; equivalently, the semigroup corresponding to $\xi$ leaves
the function $f\equiv 1$ invariant.
For a proof, see, for
instance, \cite[Theorem 10.2.2]{SV}.
It is well known that $\xi$ has a transition density $p(t, x, y)$ with respect to the Lebesgue
measure.
\end{remark}

Define
\begin{equation}\label{e:m}
m(x)=e^{2Q(x)}, \qquad x\in \R^d.
\end{equation}
Then $\xi$ is an $m$-symmetric Markov process, that is, the
semigroup of $\xi$ in $L^2(\R^d, m(x)\,\mathrm{d}x)$ is
symmetric in the sense that for any $t>0$ and $f, g\in L^2(\R^d, m(x)\,\mathrm{d}x)$,
$$
\int_{\R^d}f(x)\Pi_xg(\xi_t)m(x)\, \mathrm{d}x=\int_{\R^d}g(x)\Pi_xf(\xi_t) m(x)\, \mathrm{d}x.
$$
If $C^{\infty}_c(\R^d)$ denotes the space of infinitely differentiable functions with compact support,
then the
Dirichlet form $({\cal E}, D({\cal E}))$ of $\xi$ in $L^2(\R^d,
m(x)\,\mathrm{d}x)$ is the closure of the form given by
$$
{\cal E}(u, v)=\frac{1}{2}\int_{\R^d}(\nabla ua\nabla v)\exp(2Q)\mathrm{d}x,
\quad u,v\in C^{\infty}_c(\R^d).
$$
For any measurable space $(E, {\cal B})$, we denote by $M(E)$ the
set of all finite measures on ${\cal B}$, equipped with the weak topology.
We denote by ${\cal M}$ the Borel $\sigma$-field on $M(E)$, and so
${\cal M}$ is generated by all the functions $f_B(\mu)=\mu(B)$ with $B\in{\cal B}$.
The space of finite measures with compact support will be denoted by $M_c(E)$. The expression
$\langle f,\mu\rangle$ stands for the integral of $f$ with respect
to $\mu.$

With $\beta$ belonging to a certain Kato class (see Definition \ref{Kato}) and $k$
being locally  bounded
from above and nonnegative, we will define the fundamental quantity $\lambda_2$ in (\ref{e:l2def})
and show that $\lambda_2<\infty$.  We will use   $(\{X_t\}_{t\ge 0};
\P_{\mu},\,\mu\in M(\mathbb R^d))$
 to denote the {\it superprocess} (a measure-valued Markov process) with
$\P_{\mu}(X_0=\mu)=1$, corresponding to the semi-linear elliptic  operator
$\mathcal{A}(u):=Lu+\beta u-ku^2$ on $\R^d$.
For the precise definition, see Definition \ref{finiteMVP} below.
As we will see in Theorem \ref{existence},  the superprocess is  well defined.

\subsection{Motivation}
The main purpose of this paper is to complement the results obtained in
\cite{Englander:2004}. In particular, we study the growth/decay rate of
the total mass of $X$ and weak extinction\footnote{Some authors prefer to
say that $X$ `extinguishes.'} of $X$. Whereas in \cite{Englander:2004},
the local behavior of the mass has been shown to be
intimately related to the generalized principal eigenvalue  $\lambda_2$ corresponding to
the semigroup,
here we will show that the global behavior of the mass is linked to another important
quantity $\lambda_\infty$, the $L^{\infty}$-bound for the semigroup.

\subsection{Known results}
We first recall some definitions from Engl\"ander and Kyprianou
\cite{Englander:2004}.

\begin{definition}\label{extinction}\rm
Fix a nonzero $\mu\in{ M}(\R^d)$ with
compact support.

(i) We say that $X$ {\it exhibits local extinction} under $\P_{\mu}$ if for
every bounded Borel set $B\subset \R^d$, there exists a random time
$\tau_B$ such that
$$
\P_{\mu}(\tau_B<\infty)=1\quad \mbox{ and }\quad  \P_{\mu}(X_t(B)=0
\mbox{ for all } t\ge \tau_B)=1.$$

(ii) We say that $X$ {\it exhibits weak local extinction} under $\P_{\mu}$
if for every bounded Borel set $B\subset\R^d$,
$\P_{\mu}(\lim_{t\to\infty}X_t(B)=0)=1$.

(iii) We say that $X$ {\it exhibits extinction} under $\P_{\mu}$ if  there exists a stopping time
$\tau$ such that
$$
\P_{\mu}(\tau<\infty)=1\quad \mbox{ and }\quad  \P_{\mu}(X_t(\R^d)=0
\mbox{ for all } t\ge \tau)=1.$$

(iv) We say that $X$  {\it exhibits weak extinction} under $\P_{\mu}$ if
$\P_{\mu}(\lim_{t\to\infty}X_t(\R^d)=0)=1.$
\end{definition}

Let $\lambda_2$ be the growth bound of the semigroup in $L^2(\R^d, m)$
corresponding to the operator $L+\beta$ (see \eqref{e:l2def} and \eqref{prob-lambda}).
In \cite{Pinsky:1996}, Pinsky gave a criterion  for the local
extinction of $X$ under the assumption that $\beta$ is H\"older
continuous, namely, he proved that $X$ exhibits local
extinction if and only if $\lambda_2 \le 0$. In particular, local
extinction does not depend on the starting measure $\mu$ or the branching intensity $k$, but it
does depend on $L$ and $\beta$. (Note that, in regions where
$\beta>0$, $\beta$ can be considered as mass creation, whereas in
regions where $\beta<0$, $\beta$ can be considered as mass
annihilation.) Since local extinction depends on the sign of
$\lambda_2$, therefore, heuristically, it depends
on the competition between the outward speed of particles and the
mass creation.  The  main tools of \cite{Pinsky:1996} are  PDE
techniques.

In  \cite{Englander:2004}, Engl\"ander and Kyprianou presented
probabilistic (martingale and spine) arguments  for the fact that
$\lambda_2\le 0$ implies weak local extinction under $\P_{\mu}$ for any $\mu\in M(\R^d)$ with compact support, while $\lambda_2>0$
implies that, for any $\lambda <\lambda_2$ and any nonempty
relatively compact open set $B$,
$$
\P_{\mu}\left( \limsup_{t\to\infty}e^{-\lambda t}X_t(B)=\infty\right)>0
$$
holds for any nonzero initial measure $\mu$.

Putting things together, one concludes that in this case {\it local
extinction is in fact equivalent to weak local extinction} and there
is a  dichotomy in the sense that the process either exhibits local
extinction (when $\lambda_2\le 0$), or there is local exponential
growth with positive probability (when $\lambda_2>0$).

We will see that, on the other hand, extinction and weak extinction
are different in general. The intuition behind this is that the
total mass $\|X_t\|$ may stay positive but decay to
zero, {\it while   drifting out} (local extinction) and on its way
obeying changing  branching laws. (For a concrete example see
Example \ref{ex:new}.) This could not be achieved in a fixed compact
region with fixed branching coefficients.

Hence, weak extinction without extinction contrasts with the case without
spatial motion (continuous state branching processes), where such a
phenomenon requires a branching mechanism  which does not satisfy
the `Grey property' \cite{Grey:1974}.

In \cite{Englander:2004}
branching diffusions were studied besides superdiffusions, by using spine and
martingale methods. (Note that for
branching diffusions, weak (local) extinction and (local) extinction are
obviously the same, because the local/total mass is an integer.)
The main results concerned local extinction and local growth,
and  it was already noted that the growth rate of the  total mass may exceed
$\lambda_2 $ (see \cite[Remark 4]{Englander:2004}).

\subsection{Our main results}
It is important to point out that weak extinction, unlike local
extinction, depends on  the branching intensity $k$ as well
(see the $\lambda_\infty =0$ case below).
We will prove that the exponential growth rate of the
total mass is $\lambda_\infty $, defined by \eqref{def-Lambda}. More
precisely, there are three cases:
\begin{enumerate}
\item If mass creation is large enough so that $\lambda_\infty >0$,
then the total mass of $X$ tends to infinity exponentially with rate
$\lambda_\infty >0$, with positive probability. (Note that extinction always has a positive probability.)
\item  If annihilation is strong enough so that $\lambda_{\infty} <0$,
then the total mass of $X$ tends to zero exponentially with  rate
$\lambda_\infty <0$, a.s., even under survival.  (See Example \ref{ex:new} for
 a super-Brownian motion, where $\lambda_{\infty} <0$, but the process survives
with positive probability. Interestingly, as we will see in that example, having a
small $k$ term makes extinction avoidable,
while it cannot prevent weak extinction.)
\item If $\lambda_\infty =0$,
then weak extinction depends on  $k$.
\end{enumerate}
Concerning the third case, under some further conditions on $\beta$,
we will give a necessary and sufficient condition for $X$ to exhibit
weak extinction (see Remark \ref{r:1.4}).

Applying  our findings to the super-Brownian ($L=\frac{1}{2}\Delta$) case will
yield some interesting results; see  Section \ref{BM}.

In all the work mentioned above, $\beta$ is assumed to be H\"older
continuous. In this paper, we relax this condition by using results
of \cite{Chen:2002, Song:2002, Gesztesy:1991, Gesztesy:1995,
Zhao:1992} on Schr\"odinger operators.
The results of this paper are new even under the assumption that $\beta$
is H\"older-continuous. Furthermore, even under the H\"older-continuity assumption,
the arguments of this paper can not be
simplified by much.

Before we give the main results of this paper, let us introduce
some definitions and notation.

\begin{definition}[Kato class]\label{Kato}\rm  A measurable
function $q$ on $\R^d$ is said to be in the {\it Kato class} ${\bf K}(\xi)$  if
$$
\lim_{t\downarrow
0}\sup_{x\in\R^d}\Pi_x\left(\int^t_0|q(\xi_s)|\,\mathrm{d}s\right)=0.
$$
\end{definition}

It is easy to see that any bounded function is in the Kato class
${\bf K}(\xi)$. For any $q\in {\bf K}(\xi)$, denote
\begin{equation}
e_{q}(t):=\exp\left(\int^t_0q(\xi_u)\,\mathrm{d}u\right),
\end{equation}
and define
\begin{equation}e_{q}(\infty):=\exp\left(\int^{\infty}_0q(\xi_s)\,\mathrm{d}s\right),
\end{equation}
whenever the integral on the righthand side makes sense.
\begin{assumption}\label{Kato.assumption}
\rm In the remainder of this article, we will always assume that $\beta\in{\bf K}(\xi)$.
 \end{assumption}

One may define a semigroup $\{P^\beta_t\}_{t\ge 0}$ on
$L^p(\R^d, m)$, for any $p\in [1, \infty]$, by
$$
P^{\beta}_tf(x):=\Pi_x[e_{\beta}(t)f(\xi_t)].
$$
For any $p\in [1, \infty]$, $\|\cdot\|_p$ stands for the norm in $L^p(\R^d, m)$,
while $\|\cdot\|_{p, p}$ stands for
the operator norm from $L^p(\R^d, m)$ to $L^p(\R^d, m)$.
It follows from \cite[Theorem 3.10]{Chung:1995} that, for any $t>0$ and $p\in [1, \infty)$,
$\|P^{\beta}_t\|_{p, p}\le \|P^{\beta}_t\|_{\infty, \infty} \le e^{c_1t + c_2}$
for some constants $c_1, c_2$,
and that $\{P^\beta_t\}_{t\ge 0}$ is a strongly
continuous semigroup in $L^p(\R^d, m)$ for any $1\le p<\infty$. We
define
\begin{equation}\label{e:l2def}
\lambda_2(\beta):=\lim_{t\to\infty}\frac1{t}\log \|P^\beta_t\|_{2, 2}.
\end{equation}
\begin{remark}[Probabilistic representation]\rm
In fact, the following probabilistic characterization holds (see Appendix B):
\begin{equation}\label{prob-lambda}
\lambda_2(\beta) =\sup_{A\subset\subset \R^d}\lim_{t\to\infty}\frac{1}{t}
\log\sup_{x\in A}\Pi_x\left(
e_{\beta}(t); \tau_A>t\right).
\end{equation}
(Here $A\subset\subset \R^d$ means that $A$ is a bounded set in $\R^d$.)
In particular, $\lambda_2(0)$ is the `rate of escape from compacts' for the diffusion $\xi$.
In general, when $\beta$ is H\"older-continuous, $\lambda_2(\beta)$ coincides with the
so-called {\it generalized principal eigenvalue} of $L+\beta$ defined in \cite{Pinsky:1995}.
In our symmetric setting however,
for such a $\beta$, the situation is even simpler:  $\lambda_2(\beta)$ is the
{\it supremum of the $L^2$-spectrum} for the self-adjoint realization of the
symmetric operator $L+\beta$ on $\R^d$, obtained via the Friedrichs extension theorem.
(See \cite[Chap. 4]{Pinsky:1995}, especially Proposition 4.10.1 there, for more explanation).
\end{remark}

Now we recall the definition of an $(L,\beta, k)$-superprocess. For background
material on superprocesses, see
\cite{Dawson:1993,  Dynkin:1993, Dynkin:1994, Dynkin:2003, Li:2011}.

\begin{definition}[$(L,\beta, k)$-superprocess]\label{finiteMVP}\rm
An {\it $(L,\beta, k)$-superprocess}  is a measure-valued Markov process
$(\{X_t\}_{t\ge 0};\P_{\mu},\,\mu\in M(\mathbb R^d))$ such that
$\P_{\mu}(X_0=\mu)=1$, and  for any  bounded  Borel  $f\ge 0$ on $\R^d$,  one has
\begin{equation}\label{inhom-fund}
\P_{ \mu}\exp\langle -f,
X_{t}\rangle=\exp\langle -u(t, \cdot),\mu\rangle,
\end{equation} where $u$ is the minimal nonnegative solution to
\begin{equation}\label{inhom-int}
u(t, x)+\Pi_{x}\int^{t}_0k(\xi_s)(u(t-s, \xi_s))^2\mathrm{d}s-\Pi_{
x}\int^{t}_0\beta(\xi_s)u(t-s, \xi_s)\mathrm{d}s=\Pi_{x}f(\xi_{t}).
\end{equation}
\end{definition}
We will also say that $(\{X_t\}_{t\ge 0};\P_{\mu},\,\mu\in M(\mathbb R^d))$ is the
superprocess  `corresponding to the semi-linear elliptic  operator $\mathcal{A}(u):=Lu+\beta
u-ku^2$ on $\R^d$.'
\begin{theorem}[Existence]\label{existence}
 Suppose that $\beta\in{\bf K}(\xi)$ and $k\ge 0$ is locally  bounded.
Then the  $(L,\beta, k)$-superprocess exists.
\end{theorem}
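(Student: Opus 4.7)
The plan is to construct the superprocess via its Laplace functional in the standard Dynkin/Fitzsimmons framework. The main steps are: (i) establish existence, uniqueness, measurability and the semigroup property of the minimal nonnegative solution $u(t,x)$ of the log-Laplace equation \eqref{inhom-int}; (ii) verify that the candidate $\exp\langle-u(t,\cdot),\mu\rangle$ really is the Laplace functional of a probability measure on $M(\R^d)$ enjoying the branching (infinite divisibility) property; (iii) apply Kolmogorov's extension theorem to obtain $(X_t)_{t\ge 0}$ with the prescribed finite-dimensional distributions.

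For step (i) I would first rewrite \eqref{inhom-int} in Feynman--Kac form by a variation-of-constants argument against the multiplicative functional $e_\beta$, obtaining the equivalent equation
\[
V_t f(x) \;=\; \Pi_x\!\left[e_\beta(t)f(\xi_t)\right] - \Pi_x\!\int_0^t e_\beta(s)\,k(\xi_s)\bigl(V_{t-s}f(\xi_s)\bigr)^{\!2}\,\mathrm{d}s,
\]
where $V_t f(x):=u(t,x)$. Because $\beta\in\mathbf{K}(\xi)$, Khasminskii's lemma (as used in \cite{Chung:1995}) gives $\sup_{x\in\R^d,\,s\le t}\Pi_x e_\beta(s)\le e^{c_1 t+c_2}$, providing the $L^\infty$ control $0\le V_t f\le \Pi_x[e_\beta(t) f(\xi_t)]\le \|f\|_\infty e^{c_1t+c_2}$. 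I would then solve the equation by monotone Picard iteration: starting from the upper bound $u_0(t,x):=\Pi_x[e_\beta(t)f(\xi_t)]$, the iterates $u_{n+1}$ defined by substituting $u_n$ into the quadratic term form a bounded, monotone sequence whose limit is the minimal nonnegative solution. Joint measurability in $(t,x)$ is inherited from the iteration, and Gronwall applied to the difference of two nonnegative solutions gives uniqueness within the class of bounded solutions.

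For step (ii), the semigroup property $V_{t+s}f = V_t(V_s f)$ follows from the Markov property of $\xi$ and the minimality characterization. To establish that $\exp\langle -V_tf,\mu\rangle$ is a Laplace functional of a probability measure with the branching property, I would use a truncation argument: set $k_n:=k\wedge n$, so that the $(L,\beta,k_n)$-superprocess falls under the standard existence theory for cumulant semigroups with bounded branching rate (e.g.\ via the approximating branching-particle construction, which extends from bounded $\beta$ to $\beta\in\mathbf{K}(\xi)$ using the $P^\beta$-semigroup estimates cited from \cite{Chung:1995}). Since $V^{(n)}_t f$ is monotone in $n$ (monotonicity of the minimal solution in $k$) and converges to $V_tf$, the Laplace functionals converge monotonically, and both positive definiteness and infinite divisibility are preserved under pointwise limits of Laplace functionals, giving the result for the limit.

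Step (iii) is then routine: the semigroup property of $V_t$ yields Chapman--Kolmogorov consistency of the finite-dimensional distributions defined through Laplace functionals, and Kolmogorov's extension theorem produces the process. The main obstacle I anticipate is the technical handling of the Kato-class hypothesis on $\beta$ together with only local (rather than global) boundedness of $k$: all the moment and convergence estimates must be carried out using the Feynman--Kac bound on $e_\beta$ rather than a naive $\|\beta\|_\infty$ bound, and care is needed in the truncation step to ensure the approximating superprocesses can actually be constructed in the Kato-class setting before passing to the monotone limit.
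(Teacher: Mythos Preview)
Your overall architecture (build the log-Laplace semigroup, check it defines transition kernels on $M(\R^d)$, then Kolmogorov) is reasonable, but two of your key steps do not survive the hypothesis that $k$ is only \emph{locally} bounded, and your approximation scheme differs from the paper's in a way that matters.

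First, the ``monotone Picard iteration'' you describe is neither monotone nor a priori well-defined. With $u_0=P^\beta_t f$ and $u_{n+1}=P^\beta_t f-\Pi_x\int_0^t e_\beta(s)k(\xi_s)u_n^2(t-s,\xi_s)\,\mathrm d s$, the map $u\mapsto -ku^2$ is anti-monotone, so the iterates oscillate (odd terms increase, even terms decrease) rather than decrease. Worse, since $k$ may be unbounded, $\Pi_x\int_0^t e_\beta(s)k(\xi_s)u_0^2\,\mathrm d s$ can be $+\infty$, so already $u_1$ may fail to be a nonnegative function. Second, your uniqueness claim via Gronwall requires a Lipschitz bound on $u\mapsto ku^2$ that is uniform in $x$, which again needs $k$ bounded (or $k\in\mathbf K(\xi)$). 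The paper is explicit about this: under the stated hypotheses only a \emph{minimal} nonnegative solution is produced, and uniqueness is asserted only under the extra assumption $k\in\mathbf K(\xi)$ (see the Remark following the theorem).

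The paper's construction avoids both problems by a different approximation: exhaust $\R^d$ by smooth bounded domains $D_n$ and truncate the potential to $\beta\wedge n$, but leave $k$ alone. On $D_n$ the locally bounded $k$ is genuinely bounded, so Dynkin's theory yields a unique bounded $u_n$ solving the cumulant equation for the killed diffusion. The weak parabolic maximum principle gives $u_n\uparrow u$, everything in the Feynman--Kac form is monotone increasing, and one passes to the limit by monotone convergence. Minimality is then immediate: any nonnegative solution $v$ on $\R^d$ is a supersolution on each $D_n$, hence $v\ge u_n$ for all $n$, so $v\ge u$. Finally one invokes the Engl\"ander--Pinsky appendix to upgrade the cumulant semigroup to an $M(\R^d)$-valued Markov process. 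Your truncation $k_n=k\wedge n$ goes in the opposite direction ($u^{(n)}$ would be \emph{decreasing}), and identifying the limit as the minimal solution for the original $k$ then requires exactly the kind of dominated-convergence control on $\Pi_x\int_0^t e_\beta(s)k(\xi_s)\,\mathrm d s$ that is unavailable for merely locally bounded $k$. The domain-exhaustion trick is the missing idea.
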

\begin{remark}[Minimality and uniqueness]\rm Under our general condition on $k$,
we do not claim the uniqueness of the solution to the cumulant equation (\ref{inhom-int}).
In the Appendix, we will construct a minimal solution instead. If, however,
$k\in{\bf K}(\xi)$ holds as well, then the solution is unique, see Remark \ref{uniqueness}.
\end{remark}

Right after the construction of the superprocess, one of course would like to know
what regularity properties of the paths one can assume.
\begin{theorem}[Path regularity]\label{cadlag} Assume that $\beta\in{\bf K}(\xi)$
and is bounded from above, and $k\ge 0$ is locally  bounded. Then the superprocess
constructed in Theorem \ref{existence} has a version which possesses
$c\grave{a}dl\grave{a}g$ paths (that is, right continuous paths with
left limits, in the weak topology of measures).
\end{theorem}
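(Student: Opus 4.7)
The plan is the standard two-stage regularization argument for measure-valued Markov processes: first establish c\`adl\`ag modifications for the real-valued processes $t \mapsto \langle f, X_t \rangle$ for sufficiently many test functions $f$, and then combine these to produce a version of $X$ that is c\`adl\`ag in the weak topology of $M(\R^d)$.

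For the total mass, set $c := \sup_{\R^d} \beta < \infty$. The first-moment formula $\E_\mu \langle 1, X_t\rangle = \langle P_t^\beta 1, \mu\rangle$, together with $P_t^\beta 1 \leq e^{ct}$ and the Markov property of $X$ (which follows from Theorem \ref{existence}), shows that $\{e^{-ct}\langle 1, X_t\rangle\}_{t \geq 0}$ is a nonnegative $\P_\mu$-supermartingale. The standard supermartingale regularization theorem then yields a c\`adl\`ag modification of $\langle 1, X_\cdot\rangle$. For a general $f \in C_b^+(\R^d)$, one uses the exponential martingales arising from the Laplace functional: for fixed $T, \theta > 0$, let $u_\theta^f(t, x) := -\log \E_{\delta_x}\exp(-\theta\langle f, X_t\rangle)$ be the minimal nonnegative solution of (\ref{inhom-int}) with initial data $\theta f$. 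The Markov property forces
$$
M_s^{T,\theta,f} := \exp\bigl(-\langle u_\theta^f(T-s, \cdot), X_s\rangle\bigr), \qquad 0 \leq s \leq T,
$$
to be a bounded $\P_\mu$-martingale; it therefore admits a c\`adl\`ag modification, and so does $s \mapsto \langle u_\theta^f(T-s, \cdot), X_s\rangle$.

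Letting $T$ and $\theta$ range over countable dense subsets of $(0, \infty)$ and $f$ over a countable convergence-determining family $\mathcal{D} \subset C_b^+(\R^d)$, a diagonal argument provides simultaneous c\`adl\`ag versions on a common null set. Using the continuity of $u_\theta^f(t, x)$ in $t$ and the identity $u_\theta^f(0+, \cdot) = \theta f$ --- both consequences of the strong continuity of $\{P_t^\beta\}$ noted before (\ref{e:l2def}) together with a standard fixed-point analysis of (\ref{inhom-int}) --- one transfers the c\`adl\`ag property from $\langle u_\theta^f(T-s, \cdot), X_s\rangle$ to $\langle f, X_s\rangle$ itself, for all $f \in \mathcal{D}$. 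A standard combining step, based on tightness of $\{X_t : t \in [0, T]\}$ provided by the supermartingale bound on the total mass together with tightness on compact subsets of $\R^d$ obtained via test functions supported in large balls, then upgrades the family of scalar c\`adl\`ag modifications to a single measure-valued c\`adl\`ag version of $X$ in the weak topology of $M(\R^d)$.

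I expect the principal obstacle to lie in the transfer step: establishing the fine continuity of $u_\theta^f$ at $t = 0$ within the Kato-class framework. In classical treatments with $\beta$ H\"older continuous, this reduces to parabolic regularity; here, with $\beta$ only in ${\bf K}(\xi)$ and bounded above, one must rely instead on the operator-norm estimates for $\{P_t^\beta\}$ from \cite{Chung:1995} together with a Picard-iteration analysis of (\ref{inhom-int}) to control $\|u_\theta^f(t, \cdot) - \theta f\|_\infty$ uniformly as $t \downarrow 0$, and then promote sup-norm convergence to the needed convergence against $X_s$.
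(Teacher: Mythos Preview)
Your approach differs substantially from the paper's. The paper does not work with the exponential martingales $M_s^{T,\theta,f}$ at all; instead it invokes the Ray--Knight compactification machinery of Fitzsimmons. A preliminary lemma shows that for every $\alpha$-supermedian $f$ (relative to the bare semigroup $P^0_t$) the process $e^{-(B+\alpha)t}\langle f,X_t\rangle$ is a $\P_\mu$-supermartingale, and that the total mass is a.s.\ locally bounded along rationals. With this in hand one can run Fitzsimmons' construction: compactify $\R^d$ via a Ray cone, obtain a c\`adl\`ag realization in the Ray topology on $M_r(\overline{\R^d})$, and then use continuity of the underlying diffusion $\xi$ to descend to c\`adl\`ag paths in the ordinary weak topology on $M(\R^d)$. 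The point is that the Ray cone supplies a \emph{determining} family of supermedian functions for which the supermartingale regularization directly yields the measure-valued regularity; no separate ``transfer'' or ``combining'' step is needed.

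Your route has two soft spots that you should not underestimate. First, the transfer step: a c\`adl\`ag modification of $s\mapsto\langle u_\theta^f(T-s,\cdot),X_s\rangle$ on $[0,T)$, combined with $\|u_\theta^f(t,\cdot)-\theta f\|_\infty\to0$, gives right continuity at $s_0$ for the \emph{fixed} test function $u_\theta^f(T-s_0,\cdot)$, not for $f$. To reach $f$ you must let $T\downarrow s_0$ through your countable grid, but the c\`adl\`ag modification on $[0,T)$ may change with $T$, and you have no a~priori consistency among these modifications --- this is exactly what the Ray-cone framework packages for you. Second, the combining step: c\`adl\`ag total mass plus c\`adl\`ag integrals against a countable determining class in $C_b^+(\R^d)$ does not by itself preclude mass escaping to infinity along a sequence of times; you need a uniform tightness statement for $\{X_s:s\in[0,t]\}$ that is not delivered by the supermartingale bound on $\|X_s\|$ alone. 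The paper sidesteps this by compactifying first and proving c\`adl\`ag in $M_r(\overline{\R^d})$, then arguing separately that left limits actually live in $M(\R^d)$.
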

The proofs of Theorems \ref{existence} and \ref{cadlag} are relegated to Appendix A.

Throughout this paper, the following assumption will be in force:
\begin{assumption}[Regularity assumption]\rm  The superprocess
$X$ has $c\grave{a}dl\grave{a}g$ paths.
\end{assumption}

\begin{remark}\rm
Note that, by Theorem \ref{cadlag}, the condition that $\beta$ is
bounded from above is a sufficient condition for the existence of a
regular version of $X$. What we need in the rest of this paper is the
existence of a regular version of $X$. With Assumption 1.2 in force,
we do not need to assume that $\beta$ is bounded from above
in the rest of this paper.
\end{remark}

Returning now to the analytic tools needed, another very important quantity besides
$\lambda_2$, is given in the following definition.
\begin{definition}[$L^{\infty}$-growth bound]\label{growth.bound}\rm
Define
\begin{equation}\label{def-Lambda}
\lambda_\infty(\beta) :=\lim_{t\to\infty}\frac{1}{t}\log\|P^\beta_t\|_{\infty, \infty}=
\lim_{t\to\infty}\frac{1}{t}\log\sup_{x\in
\R^d}\Pi_xe_{\beta}(t).
\end{equation}
We call $\lambda_{\infty}=\lambda_{\infty}(\beta)$ the {\it $L^{\infty}$-growth bound}.
\end{definition}

It follows from \eqref{prob-lambda} and \eqref{def-Lambda}
that $\lambda_\infty(\beta)\ge\lambda_2(\beta)$.
In fact,
$\lambda_\infty(\beta) =\lambda_2(\beta) $ and
$\lambda_\infty(\beta) >\lambda_2(\beta) $ are both possible. For
conditions  under which $\lambda_\infty(\beta) =\lambda_2(\beta) $,
we refer to  Chen \cite[Section 4]{Chen:2011} and the references
therein. We will give some examples  of $\lambda_\infty(\beta)
>\lambda_2(\beta) $ in Section \ref{sec-example}.

For simplicity, we will
write $\lambda_2(\beta)$ as $\lambda_2$, and $\lambda_\infty(\beta)$
as $\lambda_\infty$ when the potential $\beta$ is fixed.

The following notion is of fundamental importance.

\begin{definition}[Gauge function]\label{def.of.Gauge.function}
\rm For any $\beta\in {\bf K}(\xi)$, we define
\begin{equation}\label{definition-g}
g_\beta(x)=\Pi_x(e_{\beta}(\infty)),
\quad x\in\R^d,
\end{equation}
when the right hand side is well defined. The function $g_\beta$, called
the {\it gauge function}, is very useful in studying the potential theory of
the Schr\"odinger-type operator $L+\beta$.
\end{definition}

We are now ready to state the main results of this paper, the first of which treats the
`over-scaling' and  `under-scaling' of the total mass  $\|X_t\|:=\langle 1,
X_t\rangle$.
 \begin{theorem}[Over- and under-scaling]\label{th1} Let  $\mu\in {M}(\R^d)$
be nonzero.
\begin{description}
 \item{(1)}
 For any $\lambda> \lambda_\infty$,
\begin{equation}
\P_{\mu}\left(\lim_{t\to\infty}e^{-\lambda t}\|X_t\|=0\right) =1.
\end{equation}
In particular, if $\lambda_{\infty}<0$, then $X$ suffers weak extinction.
\item{(2)} Assume that $k$ is bounded. If $\lambda_\infty>0$ and
\begin{equation}\label{uniform-bd}
\liminf_{t\to\infty}
\frac{\Pi_xe_{\beta}(t)}{\sup_{y\in \R^d}\Pi_ye_{\beta}(t)}>0
\quad \mbox{ for all } x\in \R^d
\end{equation}
holds, then for any $\lambda< \lambda_\infty$,
\begin{equation}\label{limsup=infty2}
\P_{\mu}\left(\limsup_{t\to\infty}e^{-\lambda t}\|X_t\|=\infty\right) >0.
\end{equation}
\end{description}
\end{theorem}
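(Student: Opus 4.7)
I would prove Part~(1) via a first-moment argument and Part~(2) via a second-moment (Paley--Zygmund) argument followed by reverse Fatou.

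For Part~(1): the first-moment formula for the $(L,\beta,k)$-superprocess gives $\E_\mu\|X_t\|=\int_{\R^d}\Pi_xe_\beta(t)\,\mu(dx)\le\|\mu\|\,r_t$, where I write $r_t:=\sup_y\Pi_ye_\beta(t)=\|P^\beta_t\|_{\infty,\infty}$. By Definition~\ref{growth.bound}, $(\log r_t)/t\to\lambda_\infty$, so for any $\epsilon\in(0,\lambda-\lambda_\infty)$ there is a constant $C$ with $r_t\le Ce^{(\lambda_\infty+\epsilon)t}$, making $\E_\mu[e^{-\lambda n}\|X_n\|]$ decay geometrically in $n$. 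Markov's inequality and Borel--Cantelli then yield $e^{-\lambda n}\|X_n\|\to 0$ a.s.\ along integers. To upgrade to continuous $t$, I would use that $\beta$ is bounded above (our standing regularity assumption), so with $c:=\|\beta^+\|_\infty$ the process $(e^{-ct}\|X_t\|)_{t\ge 0}$ is a nonnegative supermartingale; Doob's maximal inequality gives $\P_\nu(\sup_{s\in[0,1]}\|X_s\|\ge M)\le e^c\|\nu\|/M$, and applied at each unit interval via the Markov property, combined with a second use of Borel--Cantelli, this upgrades the convergence along integers to the full statement $e^{-\lambda t}\|X_t\|\to 0$ almost surely.

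For Part~(2): write $\phi(t,x):=\Pi_xe_\beta(t)$. Subadditivity $r_{t+s}\le r_tr_s$ and Fekete's lemma give $r_t\ge e^{\lambda_\infty t}$ for all $t$. Condition~\eqref{uniform-bd} and Fatou's lemma first yield
\[
\liminf_{t\to\infty}\frac{\E_\mu\|X_t\|}{r_t}\ge\int\liminf_{t\to\infty}\frac{\phi(t,x)}{r_t}\,\mu(dx)=:c_\mu>0,
\]
so $\E_\mu\|X_t\|\ge(c_\mu/2)r_t$ for large $t$. Differentiating the cumulant equation~\eqref{inhom-int} twice in a scalar parameter produces the variance formula
\[
\operatorname{Var}_\mu\|X_t\|=2\int\mu(dx)\,\Pi_x\!\int_0^t k(\xi_s)\,e_\beta(s)\,\phi(t-s,\xi_s)^2\,ds;
\]
bounding one factor of $\phi(t-s,\xi_s)$ by $r_{t-s}$ and then using the Markov-property identity $\Pi_x[e_\beta(s)\phi(t-s,\xi_s)]=\phi(t,x)$ collapses this to
\[
\operatorname{Var}_\mu\|X_t\|\le 2\|k\|_\infty\,\E_\mu\|X_t\|\int_0^t r_u\,du.
\]

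Paley--Zygmund then delivers
\[
\P_\mu\bigl(\|X_t\|\ge(c_\mu/2)r_t\bigr)\ge\frac{1/4}{1+(2\|k\|_\infty/c_\mu)\int_0^t r_u\,du/r_t}.
\]
Once the right-hand side is shown to be bounded below by a positive constant along a sequence $t_n\to\infty$, reverse Fatou gives $\P_\mu(\limsup_n\{\|X_{t_n}\|\ge(c_\mu/2)r_{t_n}\})>0$; on this event $\limsup_ne^{-\lambda t_n}\|X_{t_n}\|\ge(c_\mu/2)\limsup_ne^{(\lambda_\infty-\lambda)t_n}=\infty$, because $r_{t_n}\ge e^{\lambda_\infty t_n}$ and $\lambda<\lambda_\infty$. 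The main technical obstacle is precisely controlling $\int_0^t r_u\,du$ relative to $r_t$: a priori one only has $r_t=e^{\lambda_\infty t+o(t)}$, and the crude bounds $r_u\le e^{(\lambda_\infty+\epsilon)u}$, $r_t\ge e^{\lambda_\infty t}$ make this ratio grow like $e^{\epsilon t}$, which is useless. Condition~\eqref{uniform-bd} is what rescues the argument: applied to the identity $\phi(t+s,x_0)=\Pi_{x_0}[e_\beta(s)\phi(t,\xi_s)]\le\phi(s,x_0)r_t$ together with $\phi(t+s,x_0)\ge c(x_0)r_{t+s}$, it yields the uniform estimate $r_{t+s}/r_t\le\phi(s,x_0)/c(x_0)$, which forces $r_\cdot$ to be comparable with $\phi(\cdot,x_0)$ up to a multiplicative constant and makes $\int_0^{t_n}r_u\,du/r_{t_n}$ bounded along a suitable subsequence.
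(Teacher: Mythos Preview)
Your Part~(1) is correct and close in spirit to the paper's argument, though you interpolate between lattice times differently: you exploit that $e^{-ct}\|X_t\|$ is a nonnegative supermartingale (from $\beta\le c$) and apply the standard Doob maximal inequality on unit intervals, whereas the paper uses only the Kato condition on $\beta$ to obtain a short-time ``almost-submartingale'' estimate $\P_\mu(M^{(n)}_t\mid\mathcal F^{(n)}_s)\ge aM^{(n)}_s$ and then a modified Doob inequality (Lemma~\ref{modified.Doob}). Your route is simpler under the standing assumption that $\beta$ is bounded above; the paper's route works for any $\beta\in\mathbf K(\xi)$.

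Your Part~(2) follows the paper's second-moment strategy (a Paley--Zygmund-type inequality together with the variance bound $\mbox{Var}_\mu\|X_t\|\le 2\|k\|_\infty\,\P_\mu\|X_t\|\int_0^t r_u\,du$), but there is a real gap at the ``main technical obstacle'' you correctly flag: establishing
\[
\liminf_{t\to\infty}\,r_t^{-1}\int_0^t r_u\,du<\infty.
\]
Your proposed fix does not close it. The estimate $r_{t+s}/r_t\le\phi(s,x_0)/c(x_0)$ you derive from~\eqref{uniform-bd} only bounds \emph{future} values of $r$ in terms of present ones; to control $\int_0^t r_u\,du/r_t$ you need the opposite direction, an upper bound on $r_u$ for $u\le t$ in terms of $r_t$. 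Passing to $\phi(\cdot,x_0)$ via comparability does not help either---the same ratio problem reappears for $\phi$. In fact~\eqref{uniform-bd} is not what controls this ratio at all: the paper dispatches it by a short Gronwall argument (Lemma~\ref{domi-g(t)}) that uses only $\lambda_\infty>0$. Namely, if the ratio diverged, then for any $K>0$ eventually $r_t<K^{-1}\int_0^t r_u\,du$, so the antiderivative $F(t):=\int_0^t r_u\,du$ satisfies $F'<K^{-1}F$, forcing $r_t\le C_Ke^{t/K}$ and hence $\lambda_\infty\le 1/K$ for every $K$, a contradiction. Condition~\eqref{uniform-bd} enters the proof only through the lower bound on $\P_\mu\|X_t\|/r_t$, which you already have via Fatou.
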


Condition \eqref{uniform-bd} is  rather restrictive. It is certainly satisfied when $\beta$ is
a constant. Using Lemma \ref{limsup-infty} below, one can come up with many examples of
non-constant functions
satisfying this condition.

The next two theorems give some insight as to what happens when the scaling
of the total mass is exactly at $\lambda_{\infty}$.
Obviously, the conditions in the next two theorems are not optimal. We
plan to establish more general
versions of these two theorems in an upcoming paper.

\begin{theorem}[Scaling  at $\lambda_{\infty}$]\label{th2}
Let  $\mu\in {M}(\R^d)$ be nonzero.
\begin{description}
\item{(1)} Assume that $\lambda_\infty>0$ and  that \eqref{uniform-bd} holds.
If
\begin{equation}\label{g(t)-to-infty}
\lim_{t\to\infty}\Pi_xe_{\beta-\lambda_\infty}(t)=\infty
\quad \mbox{ for all } x\in\R^d,
\end{equation}
then
\begin{equation}\label{limsup=infty1}
\P_{\mu}\left(\limsup_{t\to\infty}e^{-\lambda_\infty t}\|X_t\|=\infty\right) >0.
\end{equation}
\item{(2)} If $g_{\beta-\lambda_\infty}(x)\equiv 0$ in $\R^d$ and
\begin{equation}\label{upper-domi}
\sup_{x\in\R^d}\Pi_{x}\left(\sup_{t\ge 0}e_{\beta-\lambda_\infty }(t)\right)<\infty,
\end{equation}
then
\begin{equation}\label{liminf=0}
\P_{\mu}\left(\liminf_{t\to\infty}e^{-\lambda_\infty t}\|X_t\|
=0\right)=1.
\end{equation}
If, in addition, $\beta\le 0$ on $\mathbb R^d$, then
the superprocess suffers weak extinction.
\end{description}
\end{theorem}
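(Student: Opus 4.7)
The plan is to treat the two parts separately: part (1) by a second-moment method, and part (2) by a dominated convergence / supermartingale argument.

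\textbf{Part (1).} Write $Y_t := e^{-\lambda_\infty t}\|X_t\|$ and $\phi_t(x) := \Pi_x e_\beta(t)$. From the mean formula one has
\[
\E_\mu Y_t = \int \Pi_x e_{\beta-\lambda_\infty}(t)\,\mu(dx),
\]
which tends to $+\infty$ by \eqref{g(t)-to-infty} and Fatou's lemma. I will use the standard superprocess second-moment identity,
\[
\E_\mu \|X_t\|^2 = \langle \phi_t, \mu\rangle^2 + 2\int_0^t \langle P_s^\beta(k\,\phi_{t-s}^2), \mu\rangle\, ds,
\]
together with $k$ bounded and \eqref{uniform-bd} to prove $\E_\mu Y_t^2 \le C(\E_\mu Y_t)^2$ for all $t$ large. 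With this bound, Paley--Zygmund gives $\P_\mu(Y_t \ge \tfrac12 \E_\mu Y_t) \ge 1/(4C)$ for large $t$. Since $\E_\mu Y_t\to\infty$, the reverse Fatou inequality applied to the events $\{Y_t > N\}$ produces $\P_\mu(Y_t>N\text{ i.o.}) \ge 1/(4C)$ for every $N>0$, whence $\P_\mu(\limsup_t Y_t = \infty) \ge 1/(4C)>0$, which is \eqref{limsup=infty1}.

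\textbf{Hardest step.} The crux is the second-moment estimate. Write $M_u := \sup_y \phi_u(y)$ and $\sigma_u := \langle \phi_u, \mu\rangle$. Condition \eqref{uniform-bd} combined with Fatou yields $\sigma_u \asymp M_u$ for $u$ large. Using $\phi_{t-s}^2 \le M_{t-s}\phi_{t-s}$, boundedness of $k$, and the semigroup identity $P_s^\beta\phi_{t-s} = \phi_t$, the integral term is bounded by $2\|k\|_\infty \sigma_t \int_0^t M_u\,du$, and dividing by $\sigma_t^2$ the task reduces to showing that $\int_0^t M_u\,du/\sigma_t$ remains bounded. Here I expect to use that $\lambda_\infty > 0$ is the exact exponential rate of $M_u$ (so $M_u \ge e^{\lambda_\infty u}$ by subadditivity of $\log M$), together with the fact that $\phi_t(x)/e^{\lambda_\infty t}\to\infty$ from \eqref{g(t)-to-infty}, to show the integral is dominated by its tail near $t$. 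Making this uniform bound rigorous without further regularity of $\phi_t$ is the main technical obstacle.

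\textbf{Part (2).} Under \eqref{upper-domi}, $\sup_{t\ge 0} e_{\beta-\lambda_\infty}(t)$ is $\Pi_x$-integrable uniformly in $x$, while $g_{\beta-\lambda_\infty}\equiv 0$ forces $e_{\beta-\lambda_\infty}(t)\to e_{\beta-\lambda_\infty}(\infty)=0$ $\Pi_x$-a.s.\ for each $x$. Dominated convergence therefore gives $\Pi_x e_{\beta-\lambda_\infty}(t)\to 0$ pointwise with uniform bound, and integrating against the finite measure $\mu$ yields $\E_\mu Y_t\to 0$. Fatou's lemma then produces $\E_\mu[\liminf_t Y_t] \le \liminf_t \E_\mu Y_t = 0$, giving \eqref{liminf=0}.

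For the weak-extinction addendum, $\beta\le 0$ forces $P_t^\beta 1\le 1$ and hence $\lambda_\infty\le 0$. If $\lambda_\infty<0$, Theorem~\ref{th1}(1) already delivers weak extinction. If $\lambda_\infty=0$, then $Y_t=\|X_t\|$ and the Markov property together with $\E_\mu[\|X_t\|\mid\mathcal{F}_s] = \langle P_{t-s}^\beta 1, X_s\rangle\le\|X_s\|$ show that $(\|X_t\|)_{t\ge 0}$ is a nonnegative supermartingale; it therefore converges $\P_\mu$-a.s., and since its $\liminf$ is zero by \eqref{liminf=0}, the limit is zero, proving weak extinction.
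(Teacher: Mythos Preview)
Your approach matches the paper's in structure: Part~(1) via second moments and a Paley--Zygmund lower bound, Part~(2) via dominated convergence plus the supermartingale property of $\|X_t\|$. Part~(2) is correct and essentially identical to the paper's argument (the paper does not split into the cases $\lambda_\infty<0$ and $\lambda_\infty=0$, but simply notes that $\beta\le 0$ gives $\lambda_\infty\le 0$, hence $\|X_t\|\le e^{-\lambda_\infty t}\|X_t\|$, so the liminf of $\|X_t\|$ is zero; then the supermartingale convergence upgrades this to a limit).

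The genuine gap is in Part~(1), exactly at the step you flag as hardest. You correctly reduce matters to controlling $\int_0^t M_u\,du / M_t$, where $M_u = \|P_u^\beta 1\|_\infty$ (and indeed $\sigma_t \asymp M_t$ for large $t$ under \eqref{uniform-bd}, since $\sigma_t\le \|\mu\|M_t$ and Fatou gives $\liminf \sigma_t/M_t>0$). But your proposed route---the subadditivity lower bound $M_u \ge e^{\lambda_\infty u}$ together with \eqref{g(t)-to-infty}---does not close the argument. A \emph{lower} bound on $M_u$ gives no upper control on $\int_0^t M_u\,du$; and the only available upper bound, $M_u\le e^{(\lambda_\infty+\epsilon)u}$ for large $u$, yields $\int_0^t M_u\,du/M_t \lesssim e^{\epsilon t}$, which blows up. Nothing in your hypotheses prevents $M_u/e^{\lambda_\infty u}$ from being an arbitrary subexponentially growing function, and your ``tail domination'' heuristic needs regularity you do not have.

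The paper's fix (its Lemma~\ref{domi-g(t)}) is a short Gronwall argument: if $\int_0^t M_s\,ds/M_t\to\infty$, then for every $K>0$ one eventually has $M_t\le K^{-1}\int_0^t M_s\,ds$, and Gronwall forces $M_t$ to grow at most like $e^{t/K}$; taking $K>2/\lambda_\infty$ contradicts $\tfrac1t\log M_t\to\lambda_\infty>0$. Hence $\liminf_{t\to\infty}\int_0^t M_s\,ds/M_t<\infty$, which together with $\liminf\sigma_t/M_t>0$ gives a uniform lower bound on the Paley--Zygmund probability along a subsequence $t_n\to\infty$; Fatou on the events $\{Y_{t_n}\ge K\}$ then yields \eqref{limsup=infty1}. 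Note too that you only need the second-moment ratio bounded along a subsequence, not for all large $t$ as you stated; this is precisely why the $\liminf$ conclusion of the Gronwall lemma suffices.
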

\begin{remark}\rm Assuming $g_{\beta-\lambda_{\infty}}\equiv\infty$ would automatically imply \eqref{g(t)-to-infty}.
\end{remark}

Unlike in the previous two results, the next two involve the coefficient $k$ as well.

The result below relates scaling and positive solutions (in the sense of distributions) of
 $(L+\beta-\lambda_\infty)h=0$. Recall that a function $h$ is a solution to
 $(L+\beta)h=0$ in the sense of distributions if the generalized derivative
$\nabla h$ is locally $L^2$-integrable with respect to $m(x)\mathrm{d}x$ and for any
$\varphi\in C_c^\infty(\mathbb R^d)$,
$$
\frac{1}{2}\int_{\R^d}(\nabla ha\nabla \varphi)\exp(2Q)\mathrm{d}x-\int_{\R^d}
h(x)\varphi(x)\beta(x)\mathrm{d}x=0.
$$

\begin{theorem}\label{th3}
Assume that there is a bounded solution
$h>0$ of $(L+\beta-\lambda_\infty)h=0$ in $\R^d$ in the sense of distributions.
If there exists an $x_0\in\R^d$ such that
\begin{equation}\label{<infty2}\Pi_{x_0}\int^{\infty}_0e_{\beta-2\lambda_\infty
}(s)k(\xi_s)\mathrm{d}s<\infty,
\end{equation}
then
$\lim_{t\to\infty}e^{-\lambda_\infty t}\langle h, X_t\rangle$
exists $\P_\mu$-a.s. and in $L^2(\P_{\mu})$,
and $\P_\mu(\|X_t\|>0,\ \forall t>0)>0$ for
all nonzero measures $\mu\in M_c(\R^d)$.
If, in addition, $h$ satisfies that
\begin{equation}\label{bounded-h}
\inf_{x\in\R^d}h(x)>0,
\end{equation}
then the scaling at $\lambda_{\infty}$ is the correct one in the sense that for every
nonzero $\mu\in{M}_c(\R^d)$,
\begin{equation}\label{limsup=infty}
\P_{\mu}\left(\limsup_{t\to \infty}e^{-\lambda_{\infty} t}\|X_t\|<\infty\right)=1
\end{equation}
and
\begin{equation}\label{limsup=0}
\P_{\mu}\left(\liminf_{t\to \infty}e^{-\lambda_{\infty}t}\|X_t\|>0\right)>0.
\end{equation}
\end{theorem}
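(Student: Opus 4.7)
The plan is to construct a nonnegative $\P_\mu$-martingale from $h$, prove it is bounded in $L^2$ using the second-moment formula and \eqref{<infty2}, and then transfer the resulting convergence of $\langle h,X_t\rangle$ to the behavior of $\|X_t\|$ by sandwiching $\|X_t\|$ between constant multiples of $\langle h,X_t\rangle$ when $\inf h>0$.

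First, I would upgrade the distributional identity $(L+\beta-\lambda_\infty)h=0$ to the Feynman--Kac identity
$$P_t^\beta h(x)=e^{\lambda_\infty t}h(x),\qquad x\in\R^d,\ t\ge 0.$$
Both $P_t^\beta h$ and $e^{\lambda_\infty t}h$ are bounded solutions of the Cauchy problem $\partial_t u=(L+\beta)u$, $u(0,\cdot)=h$, and for Schr\"odinger operators with Kato-class potential the bounded solution of this Cauchy problem is unique. Combining this identity with the first-moment formula $\P_\mu\langle f,X_t\rangle=\langle P_t^\beta f,\mu\rangle$ and the Markov property, one sees that
$$M_t:=e^{-\lambda_\infty t}\langle h,X_t\rangle,\qquad t\ge 0,$$
is a nonnegative $\P_\mu$-martingale with $\E_\mu M_t=\langle h,\mu\rangle$.

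Next, I would apply the standard second-moment formula for the $(L,\beta,k)$-superprocess,
$$\mathrm{Var}_\mu\langle\phi,X_t\rangle=2\int_0^t\bigl\langle P_{t-s}^\beta\bigl(k(P_s^\beta\phi)^2\bigr),\mu\bigr\rangle\,ds,$$
with $\phi=h$. Using $P_s^\beta h=e^{\lambda_\infty s}h$ and the identity $e^{-2\lambda_\infty s}P_s^\beta=P_s^{\beta-2\lambda_\infty}$, a change of variables yields
$$\E_\mu M_t^2=\langle h,\mu\rangle^2+2\int_0^t\bigl\langle P_r^{\beta-2\lambda_\infty}(kh^2),\mu\bigr\rangle\,dr.$$
Because $h$ is bounded, $L^2$-boundedness of $\{M_t\}$ reduces to showing that the function
$$G(x):=\Pi_x\!\int_0^\infty e_{\beta-2\lambda_\infty}(s)\,k(\xi_s)\,ds$$
is $\mu$-integrable. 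Hypothesis \eqref{<infty2} gives $G(x_0)<\infty$, and I expect the main technical step to be a gauge-type argument (in the spirit of the Schr\"odinger Green-operator theory of \cite{Chen:2002,Zhao:1992} on which the paper already relies) that upgrades finiteness of $G$ at a single point to local boundedness on $\R^d$; since $\mu\in M_c(\R^d)$ is compactly supported, this suffices.

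Given $L^2$-boundedness, the martingale convergence theorem provides a limit $M_\infty\ge 0$, existing $\P_\mu$-a.s.\ and in $L^2(\P_\mu)$, hence in $L^1(\P_\mu)$, and uniform integrability gives $\E_\mu M_\infty=\langle h,\mu\rangle>0$, so $\P_\mu(M_\infty>0)>0$. If $\|X_{t_0}\|=0$ for some $t_0>0$, then by the absorbing nature of the zero measure $\langle h,X_t\rangle=0$ for all $t\ge t_0$ and thus $M_\infty=0$; hence $\P_\mu(\|X_t\|>0,\ \forall t>0)\ge\P_\mu(M_\infty>0)>0$. Finally, under $\inf_x h(x)>0$, the sandwich
$$\|h\|_\infty^{-1}\langle h,X_t\rangle\le\|X_t\|\le(\inf h)^{-1}\langle h,X_t\rangle$$
gives $e^{-\lambda_\infty t}\|X_t\|\le(\inf h)^{-1}M_t$, and a.s.\ convergence of $M_t$ (whence a.s.\ boundedness) yields \eqref{limsup=infty}; while $\liminf_{t\to\infty}e^{-\lambda_\infty t}\|X_t\|\ge M_\infty/\|h\|_\infty$, strictly positive on the set $\{M_\infty>0\}$ of positive $\P_\mu$-measure, yields \eqref{limsup=0}. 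The principal obstacles I foresee are the gauge-type extension of $G(x_0)<\infty$ to $\mu$-integrability and the clean passage from the distributional eigenfunction equation for $h$ to the Feynman--Kac identity driving the martingale.
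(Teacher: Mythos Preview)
Your proposal is correct and essentially reproduces the paper's \emph{second} (``alternative'') proof of the theorem: define the martingale $M_t=e^{-\lambda_\infty t}\langle h,X_t\rangle$ (the paper's Lemma~\ref{martingale}, obtained from the invariance $\Pi_x[e_{\beta-\lambda_\infty}(t)h(\xi_t)]=h(x)$), compute its second moment via the variance formula, use the gauge-type Lemma~\ref{inequ-green} to pass from $G(x_0)<\infty$ to local boundedness of $G$ (hence $\mu$-integrability for $\mu\in M_c(\R^d)$), conclude $L^2$-convergence, and finally sandwich $\|X_t\|$ between multiples of $\langle h,X_t\rangle$ when $\inf h>0$. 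The two ``obstacles'' you flag are exactly the two lemmas the paper isolates for this purpose.

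The paper also presents a \emph{first} proof by the spine technique: one defines $\widetilde\P_\mu$ via $d\widetilde\P_\mu/d\P_\mu|_{{\cal F}_t}=M_t^h/\langle h,\mu\rangle$, uses the spine decomposition of \cite{Englander:2004} to write $\widetilde E_\mu M_t^h$ as $\langle h,\mu\rangle$ plus a term controlled by the same integral $\Pi_\mu\int_0^\infty e_{\beta-2\lambda_\infty}(s)(kh^2)(\xi_s)\,\mathrm{d}s$, and deduces $L^1(\P_\mu)$-convergence via Fatou and the supermartingale property under $\widetilde\P_\mu$. Your $L^2$ route is more elementary and yields the stronger $L^2$-convergence; the spine proof avoids the variance computation but requires the change-of-measure machinery. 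One small point: for the Feynman--Kac identity the paper does not appeal to uniqueness of the Cauchy problem but instead uses the stopped-martingale representation $h(x)=\Pi_x[e_{\beta-\lambda_\infty}(t\wedge\tau_n)h(\xi_{t\wedge\tau_n})]$ (see \cite{SoV}) and then lets $n\to\infty$ by bounded convergence; this is a cleaner way to handle your second flagged obstacle.
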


\begin{remark}[On condition (\ref{<infty2})]\rm
Assume that the coefficients are smooth and $h>0$ is a strong solution in Theorem \ref{th3}.
From the fact that the operator $(L+\beta-\lambda_{\infty})^h$  defined by
$$
(L+\beta-\lambda_{\infty})^hu(x)=\frac1{h(x)}(L+\beta-\lambda_{\infty})(uh)(x)
$$
has no potential (zeroth order)
part, it follows that $$\Pi_{x_{0}}e_{\beta}(s) h(\xi_s)\le e^{\lambda_{\infty}s}h(x_0).$$
Thus, if $k\le Ch$, then
\begin{equation*}\Pi_{x_0}\int^{\infty}_0e_{\beta-2\lambda_\infty
}(s)k(\xi_s)\mathrm{d}s\le C_1\int_0^{\infty} e^{-\lambda_{\infty}s}\, \mathrm{d}s.
\end{equation*}
Consequently if $\lambda_{\infty}>0$ and $k/h$ is bounded from above (in particular,
if $k\in C_c(\mathbb R^d)$), then condition (\ref{<infty2}) is automatically satisfied.

Similarly, if $f>0$ solves $(L-\lambda_2(0))f=0$ (such a positive harmonic
function always exists if $L$ has smooth coefficients), then
$$
\Pi_{x_{0}}f(\xi_s)\le e^{\lambda_{2}(0)s}f(x_0).
$$
Suppose now that $\beta\equiv B$, where $B$ is an arbitrary constant.
Since $\xi$ is conservative, $\lambda_{\infty}=B$. So,
if $k\le Cf$ (in particular, if $k\in C_c(\mathbb R^d)$), then
\begin{equation}\label{also.very.good}
\Pi_{x_0}\int^{\infty}_0 e_{\beta-2\lambda_\infty }(s)k(\xi_s)\mathrm{d}s\le
C_1\int_0^{\infty} e^{(-B+\lambda_2(0))s}\, \mathrm{d}s.
\end{equation}
If $B>0$, then the integral on the righthand side of (\ref{also.very.good})
is always finite (since $\lambda_2(0)\le 0$), and
so condition (\ref{<infty2}) is automatically satisfied.

If $B\le 0$, it is still satisfied as long as
$|B|<|\lambda_2(0)|$,
that is, when the motion is sufficiently transient. To give a concrete example,
consider an `outward' Ornstein-Uhlenbeck process, with parameter $\gamma>0$,
corresponding to the operator
$$L=\frac{1}{2}\Delta +\gamma x\cdot \nabla\quad \text{on}\  \R^d.$$ Since
$\lambda_2=-\gamma d$, what we need  is $0<B+\gamma d$.
\end{remark}

We now present  a partial converse to Theorem \ref{th3}.
To state this result, we need to introduce another function class.
We note that the Kato class $\mathbf{K}$ introduced in Definition \ref{Kato} was
defined by a local condition, while
the class $\mathbf{K}_{\infty}$ introduced below is defined by a {\it global} condition.

\begin{definition}[The class $\mathbf{K}_{\infty}(\xi)$]\label{Kinfty}\rm
Assume that $\xi$ is transient. A function $q\in {\bf K}(\xi)$ is {\it said to be in
the class ${\bf K}_\infty(\xi)$} if for any $\epsilon>0$
there exist a compact set $K$ and a constant $\delta>0$ such that
for any subset $A$ of $K$ with $m(A)<\delta$,
\begin{equation}
\sup_{x\in \R^d}\int_{(\R^d\setminus
K)\cup A}\widetilde{G}(x,y)|q(y)|m(y)\mathrm{d}y<\epsilon,
\end{equation}
where $m$ is the function defined in \eqref{e:m} and
$\widetilde{G}(x,y)$ is the Green function corresponding to $\xi$ with respect to
$m(x)\mathrm{d}x$ in $\R^d$.
\end{definition}

The class ${\bf K}_\infty(\xi)$ was first introduced in \cite{Song:2002,
Chen:2002}. When $\xi$ is transient and  $\beta\in {\bf K}_\infty(\xi)$,
we have $\lambda_\infty\ge 0$. In fact, it follows from
\cite[Proposition 2.1]{Song:2002} that
$\Pi_x\left(\int^{\infty}_0|\beta|(\xi_s)ds\right)$ is  bounded in
$\R^d$.  Let $M$ be the upper bound. By Jensen's inequality, we have
\begin{equation}\label{e:jensen}
\Pi_xe_\beta(t)\ge\exp\left(-\Pi_x\int^{\infty}_0|\beta|(\xi_s)\mathrm{d}s\right)\ge e^{-M},
\end{equation}
which implies that
$$
\frac{1}{t}\log\sup_{x\in\R^d}\Pi_xe_\beta(t)\ge -M/t.
$$
Thus by definition,
$$
\lambda_\infty=\lim_{t\to\infty}\sup_{x\in\R^d}\frac{1}{t}\log\Pi_xe_\beta(t)\ge 0.
$$
Note that \eqref{e:jensen} implies that $g_\beta\ge e^{-M}$.
It follows from the gauge theorem (see \cite[Theorem 2.2]{Song:2002}
or \cite[Theorem 2.6]{Chen:2002}) that, if $\xi$ is transient and
$\beta\in {\bf K}_\infty(\xi)$, then $g_\beta$ is either bounded or identically
infinite. It follows from \cite[Corollary 2.9]{Chen:2002} that the
boundedness of $g_\beta$ implies that $\sup_{x\in
\R^d}\Pi_x(\sup_{t\ge 0}e_\beta(t))<\infty$ for every $x\in \R^d$,
and hence $\lambda_\infty(\beta)=0$.

Recall that a function $f$ on $\R^d$ is said to be {\it radial} if there exists some function
$\widetilde{f}$ on $[0, \infty)$ such that $f(x)=\widetilde{f}(|x|)$ for all $x\in \R^d$.

\begin{theorem}[Weak extinction in the radial case]\label{main2}
Assume that $k$ and $\beta$ are radial functions,
and $L$ is radial (i.e., $a_{i,j}$, $i,j=1, 2,\cdots, d$, and $Q$
are radial functions). Assume that $\xi$ is transient, $\beta\in
{\bf K}_\infty(\xi)$, and that $g_{\beta}(x)$ is not
identically infinite (which implies that
$g_{\beta}$ is bounded and hence $\lambda_\infty=0$). If
\begin{equation}\label{bL2norm=infty}
\Pi_{x}\left[
\int^{\infty}_0e_{\beta}(s)k(\xi_s)\mathrm{d}s\right]=\infty\quad\mbox{ for all
}x\in \R^d,
\end{equation}
then for every $\mu\in M(\R^d)$,
\begin{equation}\label{limt=0'}
\P_{\mu}\left(\lim_{t\to\infty}\|X_t\|=0\right)= 1.\end{equation}
\end{theorem}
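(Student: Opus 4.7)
The plan is to use the gauge function $h:=g_\beta$ both to produce a nonnegative $\P_\mu$-martingale and, via a Doob $h$-transform, to reduce the problem to weak extinction of an auxiliary critical superprocess. Under the hypotheses of the theorem, $g_\beta$ is bounded above (by the gauge theorem cited before the statement) and bounded below by $e^{-M}$ (by \eqref{e:jensen}), so $h\in[e^{-M},G]$ for some positive $G$. The Markov property of $\xi$ applied to $e_\beta(\infty)$ gives $P^\beta_t h=h$, so by the first-moment formula for superprocesses $M_t:=\langle h,X_t\rangle$ is a nonnegative $\P_\mu$-martingale, convergent $\P_\mu$-a.s.\ to some $M_\infty\in[0,\infty)$. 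Since $\|X_t\|$ and $M_t$ are comparable, to prove \eqref{limt=0'} it is enough to show $\|X_t\|\to 0$ in $\P_\mu$-probability: along a subsequence $\|X_{t_n}\|\to 0$ a.s., and the lower bound $\|X_t\|\ge G^{-1}M_t$ then forces $M_\infty=0$ a.s.

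To obtain convergence in probability, I would show that the Laplace cumulant $u_\theta(t,x):=-\log\P_{\delta_x}\exp(-\theta\|X_t\|)$ tends to $0$ as $t\to\infty$ for every $\theta>0$ and $x\in\R^d$; by dominated convergence and the branching property this yields $\P_\mu\exp(-\theta\|X_t\|)\to 1$ for every $\mu,\theta$, hence $\|X_t\|\to 0$ in $\P_\mu$-distribution (equivalently, in probability). Writing \eqref{inhom-int} with $f\equiv\theta$ in Feynman--Kac form, and using that $e_\beta(t)h(\xi_t)/h(x)$ is a mean-one $\Pi_x$-martingale, I change measure to the conservative Doob $h$-transform $\widetilde\Pi_x$. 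Setting $w_\theta(t,x):=u_\theta(t,x)/h(x)$ and $\widetilde k:=k\,h$, a direct change-of-measure calculation transforms the cumulant equation into
\begin{equation*}
w_\theta(t,x)+\widetilde\Pi_x\int_0^t\widetilde k(\xi_s)\,w_\theta(t-s,\xi_s)^2\,\mathrm{d}s=\theta\,\widetilde\Pi_x[1/h(\xi_t)],
\end{equation*}
which is precisely the cumulant equation for a critical superprocess $\widetilde X$ with motion $(\xi,\widetilde\Pi)$, branching rate $\widetilde k$, no mass creation, and test function $\theta/h$. Because $h\in[e^{-M},G]$, the pointwise statement $u_\theta(t,x)\to 0$ is equivalent to weak extinction of $\widetilde X$ in the sense $\|\widetilde X_t\|\to 0$ $\widetilde\P_{\delta_x}$-a.s., and the same change of measure together with the bounds on $h$ turns the hypothesis \eqref{bL2norm=infty} into $\widetilde\Pi_x\int_0^\infty\widetilde k(\xi_s)\,\mathrm{d}s=\infty$ for every $x$.

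The main obstacle is the final step: proving weak extinction of the critical superprocess $\widetilde X$ under this divergence. Here the radial hypothesis is used essentially in two ways. First, since $L$, $\beta$, and $k$ are radial, the radial coordinate of $\xi$ under $\widetilde\Pi$ is a one-dimensional diffusion, and the event $\{\int_0^\infty\widetilde k(\xi_s)\,\mathrm{d}s=\infty\}$ lies in its tail $\sigma$-field, so a standard $0$-$1$ law for the tail of a one-dimensional diffusion upgrades the expectation divergence to $\widetilde\Pi_x$-a.s.\ divergence for every $x$. Second, a.s.\ divergence of this additive functional along the spatial motion is precisely what forces the nonnegative martingale $\|\widetilde X_t\|$ of the critical superprocess to degenerate to zero; this I would establish via a spine (size-biased) decomposition of $\|\widetilde X_t\|$, in which the expected size-biased immigration rate is $2\widetilde k(\widetilde\xi_s)\,\mathrm{d}s$ along the spine, and its $\widetilde\Pi$-a.s.\ infiniteness forces the martingale to tend to infinity under the size-biased measure and hence to zero under $\widetilde\P$. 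The remainder of the argument---the gauge bounds, the martingale construction for $X$, the $h$-transform calculation, and the passage from distributional to a.s.\ weak extinction of $X$---builds directly on the gauge-theoretic tools already developed in the earlier sections of the paper.
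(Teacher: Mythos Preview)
Your reduction via the $h$-transform with $h=g_\beta$ and the martingale/convergence-in-probability framework are correct, but the argument has a genuine gap at the ``0-1 law'' step. Even granting that the radial part of the $h$-transformed diffusion is a one-dimensional diffusion with trivial tail $\sigma$-field, so that $\widetilde\Pi_x\bigl(\int_0^\infty\widetilde k(\xi_s)\,\mathrm{d}s=\infty\bigr)\in\{0,1\}$, divergence of the \emph{expectation} does not force this probability to be $1$. A concrete counterexample: if $\xi$ is Brownian motion on $\R$ with drift $-c$ for some $c\in(0,1/2)$ (transient to $-\infty$) and $f(x)=e^x$, then $\int_0^\infty e^{\xi_s}\,\mathrm{d}s<\infty$ a.s., yet $E_0\int_0^\infty e^{\xi_s}\,\mathrm{d}s=\int_0^\infty e^{(1/2-c)s}\,\mathrm{d}s=\infty$. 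Since $k$ is only assumed locally bounded, $\widetilde k=kh$ may exhibit analogous growth at infinity, and the inference ``$\widetilde\Pi_x\bigl[\int\widetilde k\bigr]=\infty\Rightarrow\int\widetilde k=\infty$ a.s.'' is unjustified. The subsequent spine step---that a.s.\ divergence of the immigration functional along the spine forces $\|\widetilde X_t\|\to\infty$ under the size-biased law---is also not as immediate for superprocesses as you suggest: each immigrant is a critical mass that dies out, and divergence of the spine-conditional \emph{mean} of $\|\widetilde X_t\|$ does not by itself yield a.s.\ divergence of $\|\widetilde X_t\|$.

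The paper's route is quite different and sidesteps both issues. It argues by contrapositive: if weak extinction fails, a 0--1 lemma for $u_\infty$ shows the cumulant $u_{ch}(x):=-\log\P_{\delta_x}\exp\bigl(-c\lim_t\langle h,X_t\rangle\bigr)$ is strictly positive everywhere, and the integral equation for $u_{ch}(t,\cdot)$ (together with $u_{ch}(t,\cdot)\ge u_{ch}(\cdot)$) yields
\[
\Pi_x\int_0^\infty e_\beta(s)\,k(\xi_s)\,u_{ch}^2(\xi_s)\,\mathrm{d}s\le c\,h(x).
\]
The radial hypothesis enters not through a tail 0--1 law but through the ball-exit estimate $u_{ch}(\|x\|)\le u_{ch}(R)\,\Pi_x e_\beta(\tau_{B(0,R)})$ for $\|x\|<R$; letting $R\to\infty$ gives $u_{ch}(x)\le h(x)\liminf_{R}u_{ch}(R)$, whence $\liminf_{R}u_{ch}(R)>0$. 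With $u_{ch}$ thus bounded below at infinity, the displayed inequality forces $\Pi_x\int_0^\infty e_\beta(s)k(\xi_s)\,\mathrm{d}s<\infty$, contradicting \eqref{bL2norm=infty}.
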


\begin{remark}\label{r:1.4} \rm In particular, if $\xi$ is transient, $\beta\in
{\bf K}_\infty(\xi)$ and $g_{\beta}$ is not identically infinite, then
$g_\beta$ is a solution of $(L+\beta) u=0$ in the distribution sense, and is bounded between two
positive numbers (see the paragraphs after \eqref{e:jensen}). In this case,
Theorem \ref{th3} and Theorem
\ref{main2} imply that condition \eqref{bL2norm=infty} is a
necessary and sufficient condition for $X$ to exhibit weak extinction.
\end{remark}

In Section \ref{sec-example} we will give some examples for which
the conditions of our theorems are satisfied.
The assumption that $k, \beta, L$ are radial in Theorem \ref{main2} is rather restrictive.
We expect that an appropriate version of Theorem \ref{main2} will be
valid in the non-radial case too; we
plan to address this problem in an upcoming project.

\subsection{Outline}
The rest of the paper is organized as follows. In the next section we illustrate
our results with examples.
In the two sections following the examples, we provide the proofs. Those proofs
utilize some known results from Gauge Theory, as well as probabilistic
techniques. We presume that
the probabilistic audience likely to read this article would prefer to see the
(largely probabilistic) proofs of the results
without first being halted by a lengthy read about the technicalities of Gauge Theory.
Therefore, in order to make the material
presented easier to digest, we  relegate those technical lemmas into Appendix B.
In the same vein, to make the paper less overwhelmed by technical details at the
beginning, we defer the proof of path regularity to Appendix A.
The reader may consider, of course, to read the appendices right after reading the main results.

\section{Examples}\label{sec-example}

\subsection{Some super-diffusions  with $\lambda_\infty>\lambda_2$}
We start with an example in one dimension and with constant mass creation.
\begin{example}\rm
Consider the elliptic operator  $$L=\frac{1}{2}
\frac{\mathrm{d}^2}{\mathrm{d}x^2} -b_0\frac{\mathrm{d}}{\mathrm{d}x}$$ on $\R$,
where $b_0>0$ is a constant. Then the diffusion corresponding to $L$ is conservative and transient.
It is easy to see that the corresponding generalized principal
eigenvalue is $\lambda_2(0)=-b^2_0/2$. Let the potential  $\beta$  be a
nonnegative constant. We have
$\lambda_2(\beta)=\beta -b^2_0/2$
and $\lambda_\infty(\beta)=\beta$. The Green function of $\xi$  is
$G(x,y)=\frac{2\pi}{b_0}\exp\left(-2b_0(x-y)^+\right).$  Note that
$L-\beta+\lambda_\infty(\beta)=L$.

For the large time behavior of $X$ the following hold.

(i) According to   \cite[Theorem 7 and Example 1]{Pinsky:1996}, $X$ exhibits local
extinction if and only if $\beta \in[0,
b^2_0/2]$. Furthermore, when $\beta\in (b^2_0/2, \infty)$, $X$ does not exhibit
local extinction, and the exponential expected growth rate of the local mass
is $(\beta-b^2_0/2)$. More precisely, for any continuous function
$g$ on $\R$ with compact support and any  nonzero $\mu\in M_c(\R)$, one has
$$
\lim_{t\to\infty}e^{\rho t}\P_{\mu}\langle g,
X_t\rangle=\left\{\begin{array}{rl}0,\quad
&\varrho\le -(\beta-b^2_0/2),\\
+\infty,\quad &\varrho>-(\beta-b^2_0/2).\end{array}\right.
$$
In fact, by  \cite{Englander:2004}, the local mass grows exponentially with
positive probability, that is, not just in expectation.

 (ii) If $\beta>0$,
since $\Pi_xe_{\beta}(t)=e^{\beta t}$ for all $ x\in{\R}$
and $t\ge 0$, \eqref{uniform-bd} is satisfied. Thus by Theorem
\ref{th1}, we have that,  for any $\lambda> \beta$,
$$
\P_{\mu}\left(\liminf_{t\to\infty}e^{-\lambda t}\|X_t\|=0\right)
=1,
$$
and that if  $k$ is bounded, then, for any $\lambda<
\beta$,
$$
\P_{\mu}\left(\limsup_{t\to\infty}e^{-\lambda t}\|X_t\|=\infty\right) >0.
$$

(iii) Since  $u\equiv 1$ solves $Lu=0$, by Theorem
\ref{th3},  if  there exists an $x_0\in\R$ such that
\begin{equation}
\Pi_{x_0}\int^{\infty}_0e^{-\beta s}k(\xi_s)\, \mathrm{d}s<\infty,
\end{equation}
then for any nonzero $\mu\in{M_c}(\R^d)$, the limit
$\lim_{t\to \infty}\exp(-\beta t)\|X_t\|$ exists
$\P_\mu$-a.s. and in $L^2(\P_\mu)$, and
$$
0<\P_{\mu}\left(\left[\lim_{t\to \infty}\exp(-\beta t)\|X_t\|\right]^2\right)<\infty.
$$
Hence,
$$
\P_{\mu}\left(\lim_{t\to \infty}\exp(-\beta t)\|X_t\|=0\right)<1,
$$
and
$$
\P_{\mu}\left(\lim_{t\to \infty}\exp(-\beta t)\|X_t\|=\infty\right)=0.
$$

(iv) Since $L$ is radial, by Theorem \ref{main2} we have that in the case of critical branching
($\beta=0$),  if
\begin{equation}\label{iif-cond}
\int^{x}_{-\infty}\exp\left(-b_0(x-y)\right)k(y)\mathrm{d}y+\int^{\infty}_xk(y)
\mathrm{d}y=\infty,\quad
x\in\R,
\end{equation}
then
$$
\P_{\mu}\left(\lim_{t\to\infty}\|X_t\|=0\right)= 1.$$
In summary,
\begin{enumerate}
\item [(a)] If $\beta>0$,  the exponential growth rate of
the total mass is $\beta$.
\item [(b)] If $\beta=0$, weak extinction depends on
the branching rate function $k$: the superprocess exhibits weak
extinction if and only if \eqref{iif-cond} holds.
\end{enumerate}

\end{example}
In the next example the motion component is a multidimensional  `outward Ornstein-Uhlenbeck' process.
\begin{example}\rm
Consider the elliptic operator   $$L=\frac{1}{2}\Delta +\gamma x\cdot
\nabla\quad \text{on}\  \R^d,$$
where $d\ge 1$ and $\gamma>0$. Then the diffusion corresponding to $L$ is
conservative and transient, and $\lambda_2(0)=-\gamma d$.
Let the potential  $\beta$  be a positive constant. Then
$\lambda_2(\beta) =\beta-\gamma d$, and  $\lambda_\infty(\beta) =\beta$.

(i) $X$ exhibits local extinction if and only if $\beta \in[0,
\gamma d]$. If $\beta\in (\gamma d, \infty)$, then $X$ does not exhibit local
extinction, and the exponential growth rate of the local mass is
$\beta-\gamma d$. More precisely, for any continuous function $g$ on
$\R^d$ with compact support,
$$
\lim_{t\to\infty}e^{(\beta-\gamma d)t}\langle g, X_t\rangle=N_{\mu}\int_{\R^d}
g(x)\exp(-\gamma|x|^2/2)\mathrm{d}x,
\quad \mbox{ in }\P_{\mu}\mbox{--probability}
$$
for some random variable $N_{\mu}$
with mean $\int_{\R^d}\exp(-\gamma|x|^2/2)\mu(\mathrm{d}x)$
whenever there exists a $K>0$ such that
$$
k(x)\le K\exp(\gamma|x|^2/2),\quad \mbox{ for all } x\in \R^d,
$$
and the starting measure $\mu=X_0$ satisfies
$$
\int_{\R^d}\exp(-\gamma|x|^2/2)\mu(\mathrm{d}x)<\infty.
$$
See \cite[Theorem 1]{Englander:2006} and  \cite[Example 23]{Englander:2002}.

(ii) By Theorem \ref{th1}, we have that, for any $\lambda> \beta$,
$$
\P_{\mu}\left(\liminf_{t\to\infty}e^{-\lambda t}\|X_t\|=0\right)
=1,
$$
and that if  $k$ is bounded in $\R^d$, then, for any $\lambda<
\beta$,
$$
\P_{\mu}\left(\limsup_{t\to\infty}e^{-\lambda t}\|X_t\|=\infty\right) >0.
$$

(iii) Obviously, $u\equiv 1$ is a bounded solution to $Lu=0$, and
by  Theorem \ref{th3} and its proof, we have that if the branching rate
$k$ satisfies
$$
\Pi_x\int^{\infty}_0e^{-\beta s}k(\xi_s)\, \mathrm{d}s<\infty,\quad x\in\R^d,
$$
then for  any nonzero $\mu\in{M_c}(\R^d)$, there exists
$\lim_{t\to \infty}\exp(-\beta t)\|X_t\|$
$\P_\mu$-a.s., and
$$
\P_{\mu}\left[\lim_{t\to \infty}\exp(-\beta t)\|X_t\|\right]^2\in(0,\infty).
$$
Hence,
$$
\P_{\mu}\left(\lim_{t\to \infty}\exp(-\beta t)\|X_t\|=0\right)<1,
$$
and
$$
\P_{\mu}\left(\lim_{t\to \infty}\exp(-\beta t)\|X_t\|=\infty\right)=0.
$$
\end{example}

\subsection{Extinction and weak extinction}

Next is an example illustrating the difference between extinction and weak extinction.
The superprocess $X$ below exhibits local extinction and also weak extinction,
nevertheless it survives with positive probability.

\begin{example}[Weak and also local extinction, but survival]\label{ex:new}\rm
Let $B,\epsilon>$ and consider the super-Brownian motion in $\R$
with $\beta(x)\equiv -B$ and $k(x)=\exp\left[\mp\sqrt{2(B+\epsilon)}x\right],$ that is, let $X$
correspond to the semi-linear elliptic operator $\mathcal{A}$, where
$$
\mathcal{A}(u):=\frac{1}{2}\frac{\mathrm{d}^2 u}{\mathrm{d}x^2}-Bu-\exp\left[\mp\sqrt{2(B+\epsilon)}x\right]u^2.
$$
By Theorem \ref{th1}, $X$ suffers weak extinction: for any $\delta>0$,
$$
\lim_{t\to 0}e^{(B-\delta)t}\|X_t\|=0.
$$
Also, clearly, $\lambda_2=-B$, yielding that $X$ also exhibits local extinction.

Now we are going to show that, despite the above, the process $X$ survives
with positive probability, that is
$$\mathbb P_{\mu}(\|X_t\|>0,\ \forall\ t>0)>0,$$
for any nonzero $\mu\in M(\R^d)$. In order to do this, we will use the definition and
basic properties of
 $h$-transforms and weighted superprocesses. These can be found in Section 2 of \cite{Pinsky}.

The function
$h(x):=e^{\pm \sqrt{2(B+\epsilon)}x}$ transforms the operator
$\mathcal{A}$  into $\mathcal{A}^h$, where
$$
\mathcal{A}^h(u):=\frac{1}{h}\mathcal{A}(hu)=\frac{1}{2}
\frac{\mathrm{d}^2 u}{\mathrm{d}x^2}\pm \sqrt{2(B+\epsilon)}
\frac{\mathrm{d}u}{\mathrm{d}x}+\epsilon u-u^2.
$$
(Note that $h''/2-(B+\epsilon)h=0$). The superprocess $X^h$ corresponding to $\mathcal{A}^h$
is in fact the same as the original process $X$, weighted by the function $h$, and consequently,
survival (with positive probability) is invariant under $h$-transforms.  But  $X^h$ has a
conservative motion component and  constant branching mechanism, which is supercritical,
and therefore  $X^h$  survives with positive probability; the same is then true for $X$. 
\end{example}

\subsection{The super-Brownian motion case}\label{BM}

In this subsection we focus on the special case when the underlying motion process is a
Brownian motion, that is, when $L=\Delta/2$; in the remainder of this section we will
always assume that this is the case.
In this case $\beta\in {\bf K}(\xi)$ if and only if
$$
\lim_{r\to 0}\sup_{x\in \R^d}\int_{|y-x|<r}u(x-y)|\beta(y)|\, \mathrm{d}y=0,
$$
where $u$ is the function defined by
\begin{equation}\label{e:gfn4bm}
u(x):=\begin{cases}|x|^{2-d}, ~&d\ge3\\
\log |x|^{-1}, ~&d=2\\
|x|, ~ &d=1.
\end{cases}
\end{equation}
When $d\ge 3$, ${\bf K}_\infty(\xi)$
coincides with the class ${\bf K}^\infty_d$ defined in \cite{Zhao:1992}.
We recall the definition of the class ${\bf K}^\infty_d$ defined in
\cite{Gesztesy:1991, Gesztesy:1995} in the case $d\le 2$.

\begin{definition}[The classes  ${\bf K}^\infty_1(\xi)$ and
${\bf K}^\infty_2(\xi)$]\rm Let $L=\Delta/2$.
\begin{description}
\item{(1)} If  $d=1$, a function $q\in {\bf K}(\xi)$ {\it is said to
be in the class ${\bf K}^\infty_1(\xi)$} if
$$
\int_{|y|\ge 1}|yq(y)|\,\mathrm{d}y<\infty.
$$
\item{(2)} If  $d=2$, a function $q\in {\mathbf K}(\xi)$ is {\it said to
be in the class ${\bf K}^\infty_2(\xi)$} if
$$
\int_{|y|\ge 1}\ln(|y|)|q(y)|\,\mathrm{d}y<\infty.
$$
\end{description}
\end{definition}

\subsubsection{The $d\ge 3$ case}

Recall we have proved, in the paragraph below Definition 1.6,
that for any $\beta\in {\bf K}_\infty(\xi)$ we have $\lambda_\infty(\beta)\ge 0$.
The following definition is from \cite{Simon:1981}.

\begin{definition}[Criticality in terms of $\lambda_{\infty}$]\label{d:crit}\rm Let
$L=\Delta/2$ and $\beta\in {\bf K}_\infty(\xi)$. Then $\beta$ is said to be
\begin{description}
\item{(a)}
{\it supercritical} iff $\lambda_\infty(\beta)>0$,
\item{(b)}
{\it critical} iff $\lambda_\infty(\beta)= 0$ and for any
nontrivial nonnegative continuous function $q$ of compact support,
$\lambda_\infty(\beta+q)>0$.
\item{(c)}
{\it subcritical} iff it neither supercritical nor critical.
\end{description}
\end{definition}
{\bf Note:} The reader should not confuse the above properties of the function $\beta$
with the (local) criticality (or sub- or supercriticality) of the branching, which
simply refer to the sign of $\beta$ (in certain regions).

The following result relates the above definition to the solutions of
\begin{equation}\label{harmonicity}
(L+\beta)u=0,
\end{equation}
and is due to \cite{Zhao:1992}.
\begin{lemma}\label{Zhao92} Let  $L=\Delta/2$, $\beta\in {\bf K}_\infty(\xi)$ and
$d\ge 3$. Then the following conditions are equivalent:
\begin{description}
\item{(a)} $\beta$ is subcritical.
\item{(b)} $g_\beta(x)\equiv\Pi_xe_\beta({\infty})$ is bounded in $\R^d.$
\item{(c)} There exists a solution $u$ to (\ref{harmonicity}) with
$\inf_{x\in\R^d}u(x)>0$.
\item{(d)} There exists a solution $u$ to (\ref{harmonicity}) with
$0<\inf_{x\in\R^d}u(x)\le\sup_{x\in\R^d}u(x)<\infty$.
\end{description}
Moreover, if $\beta$ is  subcritical, then  (\ref{harmonicity})
has a unique (up to constant multiples) positive bounded
solution and the solution must be of the form $cg_{\beta}(x)$ for some $c>0$.
\end{lemma}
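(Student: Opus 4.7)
The plan is to first establish the cycle (b) $\Rightarrow$ (d) $\Rightarrow$ (c) $\Rightarrow$ (b) using tools already developed in the paper, and then handle (a) $\Leftrightarrow$ (b) by invoking the subcriticality--gaugeability machinery of \cite{Zhao:1992}. The uniqueness clause is addressed last via an $h$-transform.

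For the cycle: (b) $\Rightarrow$ (d) follows directly from Lemma~\ref{Gauge-positive}(2), which identifies $g_\beta$ as a continuous distributional solution of $(L+\beta)u=0$; the lower bound $g_\beta \ge e^{-M}$ with $M := \sup_x \Pi_x \int_0^\infty |\beta|(\xi_s)\,\mathrm{d}s < \infty$ (from $\beta \in \mathbf{K}_\infty(\xi)$ via \cite[Proposition~2.1]{Song:2002}, see~\eqref{e:jensen}), combined with the hypothesized upper bound, gives (d) with $u := g_\beta$. The step (d) $\Rightarrow$ (c) is immediate. For (c) $\Rightarrow$ (b), I would repeat the It\^o--Feynman--Kac calculation from the proof of Lemma~\ref{martingale}, localized at $\tau_n$ and then passed to the limit (using conservativeness of $\xi$ and monotone/dominated convergence), to obtain $u(x) = \Pi_x[e_\beta(t)u(\xi_t)] \ge (\inf u)\,\Pi_x e_\beta(t)$; Fatou's lemma then yields $g_\beta(x) \le u(x)/\inf u < \infty$, ruling out the ``identically infinite'' alternative of the gauge theorem and forcing $g_\beta$ to be bounded.

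The delicate step is (a) $\Leftrightarrow$ (b). For (a) $\Rightarrow$ (b), one uses Jensen's inequality $g_\beta(x) \le g_{(1+\epsilon)\beta}(x)^{1/(1+\epsilon)}$ together with the gauge-theorem dichotomy for $(1+\epsilon)\beta \in \mathbf{K}_\infty(\xi)$: subcriticality precludes the identically-infinite alternative and forces $g_{(1+\epsilon)\beta}$, hence $g_\beta$, bounded. For (b) $\Rightarrow$ (a), the Khasminskii moment bound is central: given $g_\beta$ bounded, \cite[Corollary~2.9]{Chen:2002} gives $\sup_x \Pi_x \sup_{t\ge 0} e_\beta(t) < \infty$, and a standard series-expansion argument then upgrades this to a uniform bound on $\sup_t \Pi_x e_{(1+\epsilon)\beta}(t)$ for some small $\epsilon > 0$, yielding $\lambda_\infty((1+\epsilon)\beta) = 0$. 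Finally, for uniqueness, let $v > 0$ be any bounded distributional solution of $(L+\beta)v = 0$; Harnack's inequality for $\mathbf{K}_\infty$-Schr\"odinger operators combined with the gauge-type lower bound gives $\inf v > 0$, so $w := v/g_\beta$ is bounded and positive. The Doob $h$-transform with $h = g_\beta$ converts $L+\beta$ into the generator of a conservative diffusion for which $w$ is bounded and harmonic, and a Liouville theorem for this transformed process forces $w \equiv c$, so $v = c g_\beta$.

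The main obstacle is (b) $\Rightarrow$ (a): producing the Khasminskii bound on $\sup_t \Pi_x e_{(1+\epsilon)\beta}(t)$ \emph{uniformly} in the starting point $x$ is where the full strength of the class $\mathbf{K}_\infty(\xi)$ and the deep estimates of \cite{Chen:2002,Song:2002,Zhao:1992} become indispensable. A secondary technical point is verifying $\inf v > 0$ for an arbitrary bounded positive distributional solution $v$ without \emph{a priori} assuming $v = c g_\beta$; this needs a global Harnack-type argument that takes full advantage of the translation-invariance of $L = \Delta/2$.
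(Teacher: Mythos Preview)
The paper does not prove this lemma at all: it is stated with the attribution ``is due to \cite{Zhao:1992}'' and no argument is given. So your proposal is not a comparison target against any proof in the paper; rather, you are attempting to reconstruct (a version of) Zhao's original argument using the gauge-theoretic machinery that the paper imports from \cite{Chen:2002, Song:2002}.

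Your cycle (b)$\Rightarrow$(d)$\Rightarrow$(c)$\Rightarrow$(b) is sound and uses exactly the right ingredients from the paper (Lemma~\ref{Gauge-positive}(2), the Jensen lower bound \eqref{e:jensen}, and the invariance $u(x)=\Pi_x[e_\beta(t)u(\xi_t)]$ as in Lemma~\ref{martingale}). The equivalence (a)$\Leftrightarrow$(b) is, as you note, where the real work lies; your sketch is in the right spirit but leans on results (the precise link between $\lambda_\infty((1+\epsilon)\beta)=0$ and boundedness of $g_{(1+\epsilon)\beta}$, and the Khas'minskii perturbation) that are essentially the content of \cite{Zhao:1992} and \cite{Chen:2002} rather than something one rederives from scratch.

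One genuine soft spot: your uniqueness argument via $h$-transform and a Liouville theorem is incomplete. After transforming by $h=g_\beta$, the resulting diffusion is conservative but there is no reason it should be recurrent or otherwise satisfy a Liouville property; bounded harmonic functions for a generic conservative diffusion on $\R^d$ need not be constant. Zhao's original proof of uniqueness proceeds differently (via the representation of bounded solutions through the gauge and a boundary/Martin-type argument specific to $L=\Delta/2$), and you would need to either invoke that or supply a separate argument for why the $g_\beta$-transformed process has trivial bounded harmonic functions.
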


However, if $\beta$ is critical, then there is no positive solution to \eqref{harmonicity} which is bounded away from zero. Pinchover
\cite{Pinchover:1995} proved the following result (see
\cite[Lemma 2.7]{Pinchover:1995}).

\begin{lemma}\label{exists-pb-solution} Let $L=\Delta/2$, $\beta\in {\bf K}_\infty(\xi)$ and
$d\ge 3$. If $\beta$ is critical, then there is an
$h>0$ satisfying  (\ref{harmonicity}) on $\R^d$ and such that
\begin{equation}\label{h-infty}
h\sim c_d|x|^{2-d}, \quad  \text{as}\ |x|\to\infty,
\end{equation}
 where $c_d$ is a positive
constant depending only on $d$.
\end{lemma}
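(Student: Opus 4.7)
The plan is to build $h$ as a suitably normalized limit of gauge functions attached to a family of subcritical perturbations of $\beta$, and then read off the asymptotic from the Newtonian Green function together with the $\mathbf{K}_\infty$ integrability of $\beta$.

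First I would choose a family $\beta_\epsilon\to\beta$ (for instance $\beta_\epsilon:=(1-\epsilon)\beta$) that is subcritical for every $\epsilon>0$; this is ensured by the monotonicity of $\lambda_\infty$ in the potential and the criticality assumption on $\beta$. By Lemma \ref{Zhao92} the gauge function $g_{\beta_\epsilon}(x)=\Pi_x[e_{\beta_\epsilon}(\infty)]$ is then a bounded positive distributional solution of $(\Delta/2+\beta_\epsilon)u=0$. Fix a reference point $x_0$ and normalize
\[
h_\epsilon(x):=\frac{g_{\beta_\epsilon}(x)}{g_{\beta_\epsilon}(x_0)}.
\]
Using the Harnack inequality provided by Lemma \ref{Gauge-positive} (whose constants I would verify to be stable as $\epsilon\downarrow 0$ using the $\mathbf{K}_\infty(\xi)$ convergence $\beta_\epsilon\to\beta$), together with standard elliptic regularity for Schr\"odinger operators with Kato-class potentials, I would extract a subsequence $h_{\epsilon_n}\to h$ locally uniformly, with $h(x_0)=1$. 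Passing to the limit in the local Poisson identities $h_\epsilon(x)=\Pi_x[e_{\beta_\epsilon}(\tau_{B_R})h_\epsilon(\xi_{\tau_{B_R}})]$ on each ball $B_R$ shows that $h$ is a distributional solution of $(\Delta/2+\beta)h=0$ on $\R^d$, and the Harnack inequality for $h$ promotes $h(x_0)=1$ to $h>0$ everywhere.

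The technical heart is the asymptotic $h(x)\sim c_d|x|^{2-d}$. Since criticality forces $g_{\beta_\epsilon}(x_0)\to\infty$ as $\epsilon\downarrow 0$, dividing the Green-function representation
\[
g_{\beta_\epsilon}(x)=1+\int G_{\Delta/2}(x,y)\beta_\epsilon(y)g_{\beta_\epsilon}(y)\,\mathrm{d}y
\]
by $g_{\beta_\epsilon}(x_0)$ erases the harmonic constant $1$ in the limit and yields the fixed-point identity
\[
h(x)=\int G_{\Delta/2}(x,y)\beta(y)h(y)\,\mathrm{d}y.
\]
The Newtonian asymptotic $G_{\Delta/2}(x,y)=c_d|x-y|^{2-d}$, combined with the $\mathbf{K}_\infty$ integrability of $\beta h$ against $G_{\Delta/2}$, then produces $h(x)\sim(c_d\int\beta h\,\mathrm{d}y)|x|^{2-d}$ as $|x|\to\infty$; the constant is brought to $c_d$ by rescaling $h$ (which is free since $h$ satisfies a linear equation).

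The main obstacle will be the uniform-in-$\epsilon$ control on $h_\epsilon$ needed to justify both the passage to the limit in the Green-function identity and the dominated-convergence step that extracts the precise $|x|^{2-d}$ tail. Specifically one must show $\int G_{\Delta/2}(x,y)|\beta(y)|h_\epsilon(y)\,\mathrm{d}y$ enjoys uniform $\mathbf{K}_\infty$-type decay outside large compacta. Pinchover's strategy develops a Martin boundary theory for critical Schr\"odinger operators and establishes uniqueness of the ground state (up to positive scalar multiples); this uniqueness is the essential ingredient for pinning down the asymptotic constant and forcing it to depend only on $d$.
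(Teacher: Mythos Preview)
The paper does not give its own proof of this lemma; it is quoted directly from \cite[Lemma 2.7]{Pinchover:1995}, so there is no in-paper argument to compare against.

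Your outline is broadly in the spirit of the cited literature (Pinchover, and Zhao \cite{Zhao:1992}): approximate by subcritical perturbations, take normalized gauges, pass to a limit via Harnack and local elliptic regularity, and then read off the decay from the Newtonian Green-function representation. You have also correctly flagged the uniform-in-$\epsilon$ control and the role of the uniqueness of the ground state as the technical crux.

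Two genuine gaps deserve mention. First, for sign-changing $\beta$ the subcriticality of $(1-\epsilon)\beta$ does not follow from ``monotonicity of $\lambda_\infty$ in the potential'' --- there is no pointwise inequality between $(1-\epsilon)\beta$ and $\beta$. The correct input is convexity of $t\mapsto\lambda_\infty(t\beta)$ (H\"older applied to $\Pi_x e_{t\beta}(s)$) together with the fact, noted after Definition~\ref{Kinfty}, that $\lambda_\infty(t\beta)\ge 0$ whenever $t\beta\in\mathbf{K}_\infty(\xi)$; these force $\lambda_\infty(t\beta)=0$ for all $t\in[0,1]$. Second, and more seriously, your final rescaling step tacitly assumes $\int_{\R^d}\beta(y)h(y)\,\mathrm{d}y>0$. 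If this integral vanished, the purported leading term $c_d|x|^{2-d}$ would drop out of the representation $h(x)=\int G_{\Delta/2}(x,y)\beta(y)h(y)\,\mathrm{d}y$ entirely, and the asymptotic would be of a different order. Establishing that criticality of $\beta\in\mathbf{K}_\infty(\xi)$ forces this integral to be strictly positive (and that $\beta h\in L^1(\R^d)$ in the first place) is a substantive structural fact about the limiting ground state $h$, separate from the uniform control on the approximants $h_\epsilon$ that you identify as the ``main obstacle''.
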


It is easy to check that, for any $p>d/2$, $\beta\in L^1(\R^d)\cap L^p(\R^d)$ implies
that $\beta\in {\bf K}_\infty(\xi)$. In this special case,
the following result shows that $h$ can be obtained as a large time
asymptotic limit of the Schr\"odinger semigroup (see \cite[Theorem 3.1]{Simon:1981}

\begin{lemma}\label{Simon}  Let  $L=\Delta/2$, $\beta\in L^1(\R^d)\cap
L^p(\R^d)$ and $d\ge 3$. If $\beta$ is critical, then
\begin{equation}\label{sup<infty}
\lim_{t\to\infty}f(t)^{-1}\sup_{x\in\R^d}\Pi_x[e_{\beta}(t)]=C,
\end{equation}
and
\begin{equation}\label{pointlimit}
\lim_{t\to\infty}f(t)^{-1}\Pi_x[e_{\beta}(t)]=h(x),\quad\forall x\in\R^d,
\end{equation}
where  $C$ is a positive constant, $h>0$  is  bounded and solves
(\ref{harmonicity}) (general theory implies, in the critical case, the existence of such a solution) and
\begin{equation}\label{def-f(t)}
f(t)=\left\{\begin{array}{lr}t,\quad &d\ge
5,\\t/(\ln t), \quad &d=4,\\t^{1/2}\quad &
d=3.\end{array}\right.
\end{equation}
\end{lemma}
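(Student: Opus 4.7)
The plan is to invoke standard Schr\"odinger-operator spectral theory. Write $H := -\tfrac12\Delta - \beta$ as the self-adjoint realization on $L^2(\R^d)$; by Feynman-Kac, $\Pi_x[e_\beta(t)] = (e^{-tH}1)(x)$. Under $\beta \in L^1 \cap L^p$ with $p>d/2$ and $d\ge 3$, $\beta$ is form-compact relative to $-\Delta$, so $\sigma_{\mathrm{ess}}(H) = [0,\infty)$. Criticality of $\beta$ in the sense of Definition \ref{d:crit} forces $\inf\sigma(H) = 0$, and the threshold is realized as a zero-energy resonance by the function $h$ from Lemma \ref{exists-pb-solution}, which satisfies $h(x)\sim c_d|x|^{2-d}$ at infinity.

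The strategy is to analyze $e^{-tH}1$ via its Laplace transform, $\int_0^\infty e^{-tE}(e^{-tH}1)(x)\,\mathrm{d}t = ((H+E)^{-1}1)(x)$, study the behavior as $E\downarrow 0$, and then extract the large-$t$ asymptotics by a Karamata-type Tauberian theorem. To compute $(H+E)^{-1}1$ near $E = 0$, I would use the Birman-Schwinger identity
\begin{equation*}
(H+E)^{-1} = R_0(E) - R_0(E)|\beta|^{1/2}(1 - K(E))^{-1}\mathrm{sgn}(\beta)|\beta|^{1/2}R_0(E),
\end{equation*}
with $R_0(E) := (-\tfrac12\Delta + E)^{-1}$ the free resolvent and $K(E) := |\beta|^{1/2}R_0(E)\mathrm{sgn}(\beta)|\beta|^{1/2}$. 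Criticality translates into $1$ being a simple eigenvalue of the compact operator $K(0)$, with eigenvector proportional to $|\beta|^{1/2}h$. A Grushin-Feshbach reduction then inverts $1 - K(E)$ via a rank-one correction whose scalar coefficient carries the singular part of the resolvent as $E\downarrow 0$.

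The dimension-dependent factor $f(t)$ emerges from the matrix element $\langle h, R_0(E)h\rangle$, whose $E\downarrow 0$ behavior governs the scalar factor in the Grushin reduction. Since $h \sim |x|^{2-d}$, when $d \ge 5$ one has $h\in L^2$ and $\langle h,h\rangle < \infty$, giving $(H+E)^{-1}1 \sim E^{-1}\,C\,h(x)$ and hence $e^{-tH}1(x) \sim C\,t\,h(x)$ by the Tauberian theorem; when $d=4$ the pairing $\langle h, R_0(E)h\rangle$ diverges like $\log(1/E)$, producing the $t/\log t$ rate; when $d=3$ it diverges like $E^{-1/2}$, producing the $\sqrt t$ rate. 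The pointwise limit \eqref{pointlimit} follows directly, while the uniform statement \eqref{sup<infty} follows by combining the pointwise asymptotics with positivity of $e^{-tH}$ on $1$, the boundedness of $h$, and a maximum-principle/subsolution comparison that controls $\sup_x \Pi_x[e_\beta(t)]$ by $f(t)\|h\|_\infty$ plus a negligible error.

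The main obstacle is the delicate threshold-resolvent expansion in the low dimensions $d=3,4$: the zero-energy resonance $h$ fails to be square-integrable, so $K(E)$ is only borderline well-behaved near threshold and the Grushin reduction must be carried out while keeping the inverse-square-root or logarithmic singularities of $R_0(E)h$ explicit. Once these expansions are in hand, both \eqref{sup<infty} and \eqref{pointlimit} follow from a single Tauberian argument applied to the scalar and operator-valued Laplace transforms, respectively.
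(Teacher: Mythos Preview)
The paper does not prove this lemma at all: it is simply quoted as \cite[Theorem 3.1]{Simon:1981}, with no argument given. So there is nothing to compare at the level of proof technique in the paper itself.

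That said, your sketch is a reasonable outline of how Simon's theorem is actually established. The threshold resolvent expansion (in the spirit of Jensen--Kato) combined with the Birman--Schwinger reduction and a Tauberian passage is exactly the machinery behind the $L^p$-asymptotics in \cite{Simon:1981}. Your identification of the dimension-dependent rates with the small-$E$ behavior of $\langle h, R_0(E) h\rangle$ is correct and is the heart of the matter. One technical point you glide over: the constant function $1$ is not in $L^2(\R^d)$, so writing $(H+E)^{-1}1$ and invoking the Birman--Schwinger identity as an $L^2$ statement needs justification; one typically works in weighted $L^2$ spaces (or equivalently passes through the integral-kernel representation of the resolvent and uses $\beta\in L^1$ to make the relevant pairings finite). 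Similarly, the ``Tauberian'' step in Simon's paper is carried out by a direct analysis of the semigroup kernel rather than by a black-box Tauberian theorem, since the latter would require monotonicity or positivity hypotheses that are not automatic here. These are refinements rather than gaps, but they are exactly where the work lies if you want a self-contained proof rather than a citation.
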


\begin{lemma} \label{limsup-infty}
Let $L=\Delta/2$ and  $d\ge 3$. If $\lambda_\infty(\beta)>0$ and
$\beta-\lambda_\infty\in L^1(\R^d)\cap L^p(\R^d)$, then conditions
\eqref{uniform-bd}  and \eqref{g(t)-to-infty} are satisfied.
\end{lemma}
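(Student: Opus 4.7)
The plan is to reduce both claims to a direct application of Lemma \ref{Simon} after a trivial shift of the potential. Setting $\tilde\beta:=\beta-\lambda_\infty$, the identity $e_\beta(t)=e^{\lambda_\infty t}e_{\tilde\beta}(t)$ causes the factor $e^{\lambda_\infty t}$ to cancel in the ratio on the left of \eqref{uniform-bd}, so that \eqref{uniform-bd} is equivalent to
\[
\liminf_{t\to\infty}\frac{\Pi_x e_{\tilde\beta}(t)}{\sup_{y\in\R^d}\Pi_y e_{\tilde\beta}(t)}>0,\qquad x\in\R^d,
\]
while \eqref{g(t)-to-infty} becomes simply $\Pi_x e_{\tilde\beta}(t)\to\infty$ for every $x$. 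Note that $\tilde\beta\in L^1\cap L^p$ with $p>d/2$ forces $\tilde\beta\in\mathbf{K}_\infty(\xi)$, and by construction $\lambda_\infty(\tilde\beta)=\lambda_\infty(\beta)-\lambda_\infty=0$, so both reduced statements concern a Schr\"odinger semigroup whose $L^\infty$-growth bound is exactly zero.

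Next I would verify that $\tilde\beta$ is critical in the sense of Definition \ref{d:crit}, which is what is required to invoke Lemma \ref{Simon}. Since $\lambda_\infty(\tilde\beta)=0$ and $\tilde\beta\in\mathbf{K}_\infty(\xi)$, the gauge theorem forces $g_{\tilde\beta}$ either to be bounded (the subcritical alternative, in which case $\Pi_x e_{\tilde\beta}(t)\to g_{\tilde\beta}(x)<\infty$ and \eqref{g(t)-to-infty} would fail) or to be identically $+\infty$ (the critical alternative, which is exactly what we need). Ruling out the subcritical branch under the standing hypothesis $\lambda_\infty(\beta)>0$ together with $\tilde\beta\in L^1\cap L^p$ is the genuinely delicate step; it should follow by combining Zhao's characterization in Lemma \ref{Zhao92} with the short-range structure of $\tilde\beta$ and the fact that $\lambda_\infty(\beta)$ is the true $L^\infty$-growth bound of the unshifted semigroup.

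Once criticality of $\tilde\beta$ is in hand, Lemma \ref{Simon} supplies a bounded positive $h$ solving $(L+\tilde\beta)h=0$ and a positive constant $C$ such that, with $f$ as in \eqref{def-f(t)},
\[
\lim_{t\to\infty}\frac{\Pi_x e_{\tilde\beta}(t)}{f(t)}=h(x)>0\ \text{for every}\ x,\qquad \lim_{t\to\infty}\frac{\sup_{y\in\R^d}\Pi_y e_{\tilde\beta}(t)}{f(t)}=C.
\]
The first relation, combined with $f(t)\to\infty$ in every dimension $d\ge 3$, yields \eqref{g(t)-to-infty} at once, and dividing the first relation by the second gives $\Pi_x e_{\tilde\beta}(t)/\sup_y\Pi_y e_{\tilde\beta}(t)\to h(x)/C>0$, which is \eqref{uniform-bd}. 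The main obstacle is therefore the criticality verification described above; the remainder of the argument is pure bookkeeping.
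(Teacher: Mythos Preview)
Your approach is precisely the paper's: shift to $\tilde\beta=\beta-\lambda_\infty$, apply Lemma~\ref{Simon} to $\tilde\beta$, and read off both conditions from the asymptotics $\Pi_xe_{\tilde\beta}(t)\sim h(x)f(t)$ and $\sup_y\Pi_ye_{\tilde\beta}(t)\sim Cf(t)$. The paper simply invokes Lemma~\ref{Simon} without pausing to check its criticality hypothesis; you rightly flag this as the nontrivial step.

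Your sketch for forcing criticality, however, cannot succeed from the stated hypotheses alone. Take $\beta\equiv c>0$ a positive constant: then $\lambda_\infty(\beta)=c>0$ and $\tilde\beta\equiv 0\in L^1(\R^d)\cap L^p(\R^d)$, yet $\tilde\beta=0$ is subcritical in the sense of Definition~\ref{d:crit} (since $(1+\epsilon)\cdot 0=0$ has $\lambda_\infty=0$ for every $\epsilon>0$), and $\Pi_xe_{\tilde\beta}(t)\equiv 1$ certainly does not tend to infinity, so \eqref{g(t)-to-infty} fails outright. Thus neither Lemma~\ref{Zhao92} nor the short-range structure of $\tilde\beta$ can rescue the argument; criticality of $\beta-\lambda_\infty$ is an additional hypothesis that the paper is tacitly assuming (note the surrounding discussion, where Lemmas~\ref{exists-pb-solution} and~\ref{Simon} are both stated under the explicit premise that the shifted potential is critical). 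Once that is granted, your final paragraph matches the paper's computation line for line.
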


\pf Note that
$$
g_{\beta}(t)=\sup_{x\in\R^d}\Pi_{x}e_{\beta}(t)=
e^{\lambda_\infty t}\sup_{x\in\R^d}\Pi_{x}e_{\beta-\lambda_\infty}(t).
$$
By Lemma \ref{Simon} we have
$$
 g_{\beta}(t)\sim
Ce^{\lambda_\infty t}f(t),\quad\mbox{ as } t\to\infty
$$
with $f(t)$ defined by \eqref{def-f(t)}, and
$$
\lim_{t\to\infty}g^{-1}_{\beta}(t)\Pi_xe_{\beta}(t)=\frac{1}{C}
\lim_{t\to\infty}f^{-1}(t)\Pi_xe_{\beta-\lambda_\infty}(t)>0,
$$
which means  that  conditions \eqref{uniform-bd} and
\eqref{g(t)-to-infty} are satisfied.\qed

This subsection shows that there are many examples of $\beta$ satisfying the conditions of
Theorems \ref{th1}--\ref{th2}(1).

\subsubsection{The $d\le 2$ case}

The purpose of this subsection is to show that the assumptions of
Theorem \ref{th2}(2) are satisfied for some super-Brownian motions in $\R^d$ with $d\le 2$.

The following lemma is due to
\cite{Gesztesy:1991, Gesztesy:1995}.

\begin{lemma}\label{Gesztesy and Zhao} Let $d\le 2$, $L=\Delta/2$ and
$\beta\in {\bf K}^\infty_d(\xi)$. The
following conditions are equivalent.
\begin{description}
\item{(a)}  $\beta$ is critical.
\item{(b)} There exists a positive bounded solution to (\ref{harmonicity}).
\end{description}
Moreover, if $\beta$ is critical, then the positive bounded solution $h$
to (\ref{harmonicity}) is unique (up to constant multiples), and $h$ possesses the
following representation:
$$
h(x)=\left\{\begin{array}{lr}h(0)\lim_{r\downarrow 0}\Pi_xe_\beta(T_{B(0, r)}),
&\quad d=2\\
 h(0)\Pi_xe_\beta(T_0), &\quad d=1,\end{array}\right.
$$
where for
every open set $B$, $T_B=\inf\{ t>0; \xi_t\in B\}$ denotes the first
hitting time of $B$, and $T_0=T_{\{0\}}$ denotes the first hitting
time of $\xi$ at the point $0$.  Moreover, $h$ is  bounded away from zero.
\end{lemma}

It follows from the previous lemma  that, in the case $d\le 2$, if
$\lambda_\infty(\beta)>0$, $\beta-\lambda_\infty(\beta)\in
{\bf K}^\infty_d$ and $\beta-\lambda_\infty(\beta)$ is critical, then the
assumption \eqref{bounded-h}  of Theorem \ref{th3} is satisfied.

\bigskip

\begin{remark}
\rm
Let $d\le 2$ and $L=\Delta/2$. Murata proved the following result (see  \cite[Theorem 4.1]{Murata:1984}): If $\beta\sim |x|^{-\rho}$ $(\rho>4)$ as
$|x|\to\infty$ (obviously  $\beta\in {\bf K}^\infty_d$) and $\beta$ is
subcritical,  then there exists a positive solution $h$
to (\ref{harmonicity}) such that
$$
h(x)=\left\{\begin{array}{lr}(2\pi)^{-1}\log\frac{|x|}{2}+
\mathcal{O}(1),&\quad \mathrm{for}\ d=2,\\
(2\pi^{1/4})^{-1}|x|+\mathcal{O}(1),&\quad  \mathrm{for}\ d=1,\end{array}\right.
$$ as
$|x|\to\infty$.
\end{remark}

Thus if $d\le 2$, $L=\Delta/2$,  $\beta-\lambda\in {\bf K}^\infty_d$ and  $\beta-\lambda$ is
subcritical, then there is no positive bounded solution to $(L +\beta
-\lambda )h=0$. In order to deal with the subcritical case, we
need to develop some results on Schr\"odinger semigroups. We believe, that these
results are also of independent interest.

\begin{lemma}\label{sup-sup} Let $d\le 2$, $L=\Delta/2$ and
$\beta\in {\bf K}^\infty_d$. If
$\lambda_\infty(\beta)=0$, then
\begin{equation}\label{sup-t-sup-x}
\sup_{t\ge 0}\sup_{x\in\R^d}\Pi_xe_\beta(t)<\infty.
\end{equation}
\end{lemma}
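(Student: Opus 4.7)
The plan is to exhibit a function $h \colon \R^d \to (0,\infty)$ bounded above and below by strictly positive constants that satisfies the Feynman--Kac harmonicity identity
\[
h(x) \;=\; \Pi_x\bigl[e_\beta(t) h(\xi_t)\bigr], \qquad x\in\R^d,\ t\ge 0,
\]
since any such $h$ immediately yields
\[
\sup_{t\ge 0}\sup_{x\in\R^d}\Pi_x e_\beta(t) \;\le\; \|h\|_\infty \Big/ \inf_{y\in\R^d} h(y) \;<\; \infty.
\]
I would dichotomize according to Definition \ref{d:crit} into the critical and subcritical cases.

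In the critical case, Lemma \ref{Gesztesy and Zhao} delivers a bounded positive $h$ solving $(L+\beta)h=0$ in the distributional sense with $\inf h > 0$; the harmonicity identity $h(x) = \Pi_x[e_\beta(t) h(\xi_t)]$ then follows from the distributional equation by the bounded-convergence argument already used in \eqref{harmonic2} inside the proof of Lemma \ref{martingale}, so the claim is immediate.

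In the subcritical case, Lemma \ref{Gesztesy and Zhao} rules out the existence of a bounded $h$ bounded away from zero, so I would instead pass to a larger multiple of $\beta$ that is critical. Introduce
\[
c^* := \sup\bigl\{c \ge 1 : \lambda_\infty(c\beta) \le 0 \bigr\},
\]
and note that $c^*\ge 1+\epsilon_0>1$ by the definition of subcriticality. The function $c \mapsto \lambda_\infty(c\beta)$ is convex (as the pointwise limit of the convex cumulants $c\mapsto\tfrac{1}{t}\log\sup_x\Pi_x e_{c\beta}(t)$) and hence continuous on the interior of its finiteness domain; thus $\lambda_\infty(c^*\beta) = 0$ while $\lambda_\infty(c\beta) > 0$ for $c > c^*$, i.e.\ $c^*\beta$ is critical. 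Applying the critical case to $c^*\beta$ yields $\sup_{t,x}\Pi_x e_{c^*\beta}(t) \le M$, and Jensen's inequality with the concave map $y\mapsto y^{1/c^*}$ gives
\[
\Pi_x e_\beta(t) \;=\; \Pi_x\!\left[\bigl(e_{c^*\beta}(t)\bigr)^{1/c^*}\right] \;\le\; \bigl(\Pi_x e_{c^*\beta}(t)\bigr)^{1/c^*} \;\le\; M^{1/c^*}.
\]
Degenerate configurations such as $\beta \le 0$ (where $e_\beta \le 1$ trivially) or the hypothetical $c^*=+\infty$ (which should be excluded by a compactly supported positive perturbation, invoking the $\mathbf{K}^\infty_d$ structure) are treated by direct inspection.

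The main obstacle is the subcritical case: justifying that the critical multiplier $c^*$ is finite and that $c^*\beta$ is indeed critical rather than merely satisfying $\lambda_\infty(c^*\beta)=0$. This step relies on the convexity/continuity of $c\mapsto \lambda_\infty(c\beta)$ together with rigidity coming from the structural properties of the class $\mathbf{K}^\infty_d$ in dimensions $d\le 2$ developed in \cite{Gesztesy:1991,Gesztesy:1995}, and is the step I expect to require the most care.
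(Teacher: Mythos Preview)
Your treatment of the critical case is identical to the paper's: both invoke Lemma~\ref{Gesztesy and Zhao} to produce a bounded positive solution $h$ with $\inf h>0$, and both deduce the uniform bound $\sup_{t,x}\Pi_x e_\beta(t)\le \sup h/\inf h$ from the Feynman--Kac invariance $h(x)=\Pi_x[e_\beta(t)h(\xi_t)]$.

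In the subcritical case your route diverges from the paper's. The paper does \emph{not} scale $\beta$ to a critical multiple; instead it defers to Lemma~\ref{sup-sup2}, which establishes the stronger bound $\sup_{x}\Pi_x\sup_{t\ge 0}e_\beta(t)<\infty$ directly by a hitting-time/spectral-radius argument drawn from \cite{Gesztesy:1991,Gesztesy:1995}: one introduces a transfer operator $A_S$ built from successive exits of a large ball and returns to a smaller one, shows its spectral radius is $<1$ when $\beta$ is subcritical, and sums the resulting geometric series. Your Jensen trick, \emph{if} it can be made to work, would be considerably shorter and more conceptual than this machinery.

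The genuine gap is the finiteness of $c^*$. Your proposal treats $c^*=+\infty$ as a degenerate case dispatchable ``by direct inspection'' via ``a compactly supported positive perturbation,'' but this is where the argument actually lives. If $\beta\le 0$ a.e.\ the bound $e_\beta\le 1$ is trivial and you say so; the problem is the mixed-sign case with $\beta^+\not\equiv 0$. You must then show $\lambda_\infty(c\beta)>0$ for some (hence all large) $c$, i.e.\ that $c\beta$ eventually becomes supercritical. This is plausible---for instance, one can try to exhibit a test function $f\in C_c^\infty$ with $\int f^2\beta>0$ and feed it into the variational formula \eqref{e:l2byf1} to force $\lambda_2(c\beta)>0$ for $c$ large, whence $\lambda_\infty(c\beta)\ge\lambda_2(c\beta)>0$---but for a general Kato-class $\beta$ whose positive part need not be continuous or supported on an open set, constructing such an $f$ requires care (Lebesgue density plus a local Kato estimate on $\beta^-$, say). ``Perturbing by a compactly supported positive bump'' changes $\beta$ and does not address the original potential. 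Until this step is written out, the subcritical half of your argument is incomplete.
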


\pf  Since
$\lambda_\infty(\beta)=0$, $\beta$ is either critical or subcritical. For the
subcritical case we will prove a  stronger result later,
see Lemma \ref{sup-sup2}  below.
Thus, we now assume that $\beta$ is critical. Then
Lemma \ref{Gesztesy and Zhao} asserts that there exists a bounded
solution $\psi$ to (\ref{harmonicity}) such that $\psi>0$ and
$\sup_{x\in\R^d}\psi^{-1}(x)<\infty$. We then have
$$\begin{array}{rl}\Pi_xe_\beta(t)=&\Pi_x(e_\beta(t)(\psi^{-1}\psi)(\xi_t))\\
\le&\left(\sup_{x\in\R^d}\psi^{-1}(x)\right)\Pi_x(e_\beta(t)\psi(\xi_t))\\
=&\left(\sup_{x\in\R^d}\psi^{-1}(x)\right)\psi(x)\\
\le&\sup_{x\in\R^d}\psi(x)/\inf_{x\in\R^d}\psi(x)<\infty.\end{array}
$$
This proves \eqref{sup-t-sup-x}.
\qed

\begin{remark}
\rm
Murata (see
\cite[Corollary 1.6]{Murata:1984}) proved the above result for  $d=2$ under the
condition that $\beta\sim |x|^{-\rho}$ $(\rho>4)$ as $|x|\to\infty$,
which implies that $\beta\in {\bf K}^\infty_2$. Our proof above goes along the
line given in the proof of \cite[ Corollary 1.6(ii)]{Murata:1984}.
\end{remark}

\bigskip

 If $\beta$ is subcritical,  we have the following stronger result.
\begin{lemma}\label{sup-sup2}
Let $d\le 2$, $L=\Delta/2$ and $\beta\in {\bf K}_d^\infty$. If  $\beta$ is subcritical, then
\begin{equation}\label{sup-x-t}
\sup_{x\in\R^d}\Pi_x\sup_{0\le t\le\infty}e_\beta(t)<\infty.
\end{equation}
\end{lemma}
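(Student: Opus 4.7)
The plan is to reduce the subcritical case to the critical one via a rescaling argument, and then apply a Doob-type maximal bound at a fractional exponent to a suitable nonnegative martingale. First, if $\beta\le 0$ almost everywhere, then $e_\beta(t)\le 1$ for every $t$ and the conclusion is trivial; so I may assume that $\{\beta>0\}$ has positive Lebesgue measure.

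The key preliminary is to locate a multiplier $p^*>1$ at which $p^*\beta$ is critical. From the log-convexity of the operator norm in the potential, the map $p\mapsto \lambda_\infty(p\beta)$ is convex on $[0,\infty)$; together with $\lambda_\infty(0)=\lambda_\infty(\beta)=0$, this forces $\lambda_\infty(p\beta)\ge 0$ on $[1,\infty)$, and combined with subcriticality ($\lambda_\infty((1+\epsilon)\beta)=0$ for some $\epsilon>0$) gives $\lambda_\infty(p\beta)=0$ on all of $[1,1+\epsilon]$. Conversely, plugging an $L^2$-normalized $f$ supported in a set $\{\beta\ge c\}$ with $c>0$ into the variational representation \eqref{e:l2byf1} yields $\lambda_\infty(p\beta)\ge \lambda_2(p\beta)\ge -\tfrac{1}{2}\|\nabla f\|_2^2+pc\to\infty$ as $p\to\infty$. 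Hence $p^*:=\sup\{p\ge 1:\lambda_\infty(p\beta)=0\}$ is finite, and by continuity of $\lambda_\infty$ under scaling together with the maximality of $p^*$, the potential $p^*\beta$ is critical in the sense of Definition \ref{d:crit}.

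Since $p^*\beta\in \mathbf{K}^\infty_d$ and is critical, Lemma \ref{Gesztesy and Zhao} provides a bounded positive $\psi$ with $(L+p^*\beta)\psi=0$, $c_1:=\inf_{x\in\R^d}\psi(x)>0$ and $c_2:=\sup_{x\in\R^d}\psi(x)<\infty$. As in the proof of Lemma \ref{martingale}, $N_t:=e_{p^*\beta}(t)\psi(\xi_t)$ is a nonnegative $\Pi_x$-martingale with $\Pi_x N_t=\psi(x)$. Setting $q:=1/p^*\in(0,1)$, the identity
$$e_\beta(t)^{p^*}=e_{p^*\beta}(t)=N_t/\psi(\xi_t)\le N_t/c_1$$
gives $\sup_{t\ge 0}e_\beta(t)\le c_1^{-q}\bigl(\sup_{t\ge 0}N_t\bigr)^q$. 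Doob's maximal inequality for the nonnegative martingale $N$ gives $\Pi_x(\sup_{t\le T}N_t\ge\lambda)\le \psi(x)/\lambda\le c_2/\lambda$, and the layer-cake formula yields
$$\Pi_x\bigl(\sup_{t\le T}N_t\bigr)^q=q\int_0^\infty \lambda^{q-1}\Pi_x\bigl(\sup_{t\le T}N_t\ge\lambda\bigr)\, \mathrm{d}\lambda\le \frac{c_2^q}{1-q},$$
uniformly in $x$ and $T$. Letting $T\uparrow\infty$ by monotone convergence concludes $\sup_{x\in\R^d}\Pi_x\sup_{t\ge 0}e_\beta(t)\le c_1^{-q}c_2^q/(1-q)<\infty$.

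The main obstacle is the first step: securing $p^*<\infty$ and the criticality of $p^*\beta$. This requires the convexity and the continuity of $p\mapsto \lambda_\infty(p\beta)$ in the $\mathbf{K}^\infty_d$ framework, plus the variational lower bound $\lambda_2(p\beta)\to\infty$ that rules out $p^*=\infty$. Once this is in place, the rest of the proof is a clean application of the weak-$(1,1)$ maximal inequality to the positive martingale $N$ at exponent $q<1$, which is precisely what subcriticality (as opposed to mere criticality) buys us.
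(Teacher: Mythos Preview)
Your proof is correct and takes a genuinely different route from the paper's. The paper argues directly in the subcritical regime via hitting-time operators: for $d=2$ it introduces stopping times $S_n$ built from alternately exiting a large ball $B_R$ and re-entering a small ball $B_r$, and invokes a Gesztesy--Zhao result that the transfer operator $(A_Sf)(x)=\Pi_x[e_\beta(S)f(\xi_S)]$ on $C(\partial B_r)$ has spectral radius strictly below $1$. Summing over $n$ yields $\sup_x\Pi_x\int_0^\infty e_\beta(t)\beta^+(\xi_t)\,\mathrm{d}t<\infty$, and the pathwise bound $\sup_t e_\beta(t)\le 1+\int_0^\infty e_\beta(s)\beta^+(\xi_s)\,\mathrm{d}s$ concludes; $d=1$ is handled analogously using $S_x=T_{x+1}+T_x\circ\theta_{T_{x+1}}$. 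Your approach instead exploits the definition of subcriticality: rescaling up to a critical $p^*\beta$ produces (via Lemma~\ref{Gesztesy and Zhao}) a ground state $\psi$ bounded between two positive constants, and the continuous nonnegative martingale $N_t=e_{p^*\beta}(t)\psi(\xi_t)$ controls $e_\beta(t)=(e_{p^*\beta}(t))^{1/p^*}$ through Doob's weak-type inequality at exponent $q=1/p^*<1$. This is shorter and more conceptual; the paper's route, on the other hand, delivers the stronger intermediate estimate on $\Pi_x\int_0^\infty e_\beta(t)\beta^+(\xi_t)\,\mathrm{d}t$.

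Two small repairs. First, in showing $\lambda_2(p\beta)\to\infty$ you ask for $f\in C_c^\infty$ \emph{supported inside} $\{\beta\ge c\}$, but for a merely measurable $\beta$ that level set need not have interior. It suffices, and is easy (since $\beta$ is locally integrable with $\beta^+\not\equiv 0$), to produce $f\in C_c^\infty$ with $\int f^2\beta>0$; then already $\lambda_2(p\beta)\ge -\tfrac12\|\nabla f\|_2^2+p\int f^2\beta\to\infty$. Second, in the layer-cake bound you must use $\Pi_x(\sup_{t\le T}N_t\ge\lambda)\le \min(1,c_2/\lambda)$, not just $c_2/\lambda$, to keep the integral finite near $0$; the stated constant $c_2^q/(1-q)$ is exactly what one gets after splitting at $\lambda=c_2$.
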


\pf We first prove the result for dimension $d=2$. For
$r>0$ we denote the open ball of radius $r$ with center at the
origin and its open exterior by
$$
B_r=\{x\in\R^d, \quad |x|<r\};\quad B^*_r=\{x\in\R^d, \quad
|x|>r\}.
$$
According to  \cite[Proposition 2.2]{Gesztesy:1995},
there exists an $r_0>0$ such that for all $r\ge r_0$ and $x\in
B^*_r$,
\begin{equation}\label{gauge-infty}
\Pi_xe_{\beta^+}(\tau_{B^*_r})\le
2, \quad e^{-1/2}\le\Pi_xe_{\beta}(\tau_{B^*_r})\le 2.
\end{equation}
Choose $r_0$ large enough such that $\mbox{supp}(\mu)\subset B_{r_0}$.
We fix two real numbers $r$ and $R$ with $R>r\ge r_0$. Since
$\beta$ is subcritical, by  \cite[Theorem 2.1]{Gesztesy:1991},
$$
\Pi_xe_{\beta}(\tau_{B_R})<\infty,\quad \forall x\in B_R.
$$
We define
$$S=
\tau_{B_R}+\tau_{B^*_r}\circ\theta_{\tau_{B_R}}.
$$
Put
$$
S_0=0;\quad S_n=S_{n-1}+S\circ\theta_{S_{n-1}},\quad n\ge 1.
$$
In particular, $S_1=S$.
For any $f\in C(\partial B_r)$, we define
$$
(A_Sf)(x)=\Pi_x(e_\beta(S)f(\xi_S)),\quad x\in\partial B_r.
$$
Note that
$$
A^n_Sf(x)=\Pi_x\left[e_\beta(S_n)f(S_n)\right], \quad x\in\partial B_r.
$$
The spectral radius of $A_S$ is defined by
$$
\~{\lambda}(\beta):=\lim_{n\to\infty}\|A^n_S\|^{1/n}.
$$
It follows from \cite[Theorem 2.4]{Gesztesy:1995} that $\~{\lambda}(\beta)<1.$
Thus there exists $\delta>0$ such that $\~{\lambda}(\beta)+\delta<1,$
and  sufficiently large $n$ such that,
$\|A^n_S\|\le (\~{\lambda}(q)+\delta)^n.$
Therefore we have
\begin{equation}\label{series-finit}
\sum^{\infty}_{n=0}\sup_{x\in\R^d}|A^n_S1(x)|=\sum^{\infty}_{n=0}
\sup_{x\in\R^d}\Pi_{x}e_\beta(S_n)<\infty.
\end{equation}
By the strong Markov property applied at $\tau_{B_{R}}$, along with the simple fact that $\int^t_0e_{\beta^+}(s)\beta^+(s)\, \mathrm{d}s=e_{\beta^+}(t)-1$,
and finally by\eqref{gauge-infty}, we have
$$\begin{array}{rl}
\Pi_x\displaystyle\int^{S}_0e_\beta(t)\beta^+(t)\, \mathrm{d}t=&
\Pi_x\displaystyle\int^{\tau_{B_R}}_0e_\beta(t)\beta^+(t)\, \mathrm{d}t+
\Pi_x\left[\Pi_{\xi_{\tau_{B_{R}}}}\displaystyle\int^{\tau_{B^*_r}}_0
e_{\beta}(t)\beta^+(t)\,
\mathrm{d}t\right]\\
\le&\displaystyle\Pi_x\int^{\tau_{B_R}}_0e_\beta(t)\beta^+(t)\, \mathrm{d}t+
\Pi_x\left[\Pi_{\xi_{\tau_{B_{R}}}}\displaystyle\int^{\tau_{B^*_r}}_0
e_{\beta^+}(t)\beta^+(t)\, \mathrm{d}t\right]\\
=&\Pi_x\displaystyle\int^{\tau_{B_R}}_0e_\beta(t)\beta^+(t)\, \mathrm{d}t+
\Pi_x\left[\Pi_{\xi_{\tau_{B_{R}}}}e_{\beta^+}(\tau_{B^*_r})\right]-1\\
\le&\Pi_x\displaystyle\int^{\tau_{B_R}}_0e_\beta(t)\beta^+(t)\, \mathrm{d}t+1.\end{array}
$$

Let $\xi^{B_R}$ denote the  Brownian motion killed upon exiting $B_R$.
Since $\beta$ is subcritical,
the function $x\to \Pi_x e_\beta(\tau_{B_R})$ is bounded on $B_R$.
It follows from \cite[Theorem 2.8]{Chen:2002} that
$$
\sup_{x\in B_R}\Pi_x\int^{\tau_{B_R}}_0e_\beta(t)\beta^+(t)\, \mathrm{d}t<\infty.
$$
Thus
\begin{equation}\label{strong-gauge}
C:=\sup_{x\in\partial B_r}\Pi_x\int^{S}_0e_\beta(t)\beta^+(t)\, \mathrm{d}t<\infty.
\end{equation}
By the strong Markov property, applied at $S_n$, and by \eqref{series-finit}, and
\eqref{strong-gauge}, we have
\begin{equation}
\begin{array}{rl}\label{strong-gauge2}
\displaystyle\sup_{x\in\R^d}\Pi_x\int^{\infty}_0e_\beta(t)\beta^+(t)\, \mathrm{d}t
\le&\sum^{\infty}_{n=0}\sup_{x\in\R^d}\Pi_x\left[\displaystyle
\int^{S_{n+1}}_{S_n}e_\beta(t)\beta^+(t)\, \mathrm{d}t\right]\\
=&\sum^{\infty}_{n=0}\sup_{x\in\R^d}\Pi_x\left[e_\beta(S_n)\Pi_{\xi_{S_n}}
\displaystyle\int^{S}_0e_\beta(t)\beta^+(t)\, \mathrm{d}t\right]\\
\le&C\sum^{\infty}_{n=0}\sup_{x\in\R^d}\Pi_xe_\beta(S_n)<\infty.
\end{array}
\end{equation}
Observe that
$$
e_\beta(t)=1+\int^t_0e_\beta(s)\beta(s)\, \mathrm{d}s\le1+\int^t_0
e_\beta(s)\beta^+(s)\, \mathrm{d}s,
$$
and so
$$
\sup_{0\le t\le \infty}e_\beta(t)\le 1+\int^{\infty}_0e_\beta(s)\beta^+(s)\, \mathrm{d}s.
$$
Using \eqref{strong-gauge2} we get \eqref{sup-x-t} and we finish the
proof for dimension $d=2$.

Now let $d=1$. Define
$$u(a, b):=\Pi_xe_\beta(T_b),\quad a, b\in \R^1,$$
where $T_b$ is the first hitting time of $\xi$ at the point $b$. By
\cite[Theorem 4.8]{Gesztesy:1991}, $u(a, b)u(b,a)<1$ for any $a,
b\in\R^1$. For any $x\in \R^1$, define
$$
S_x=T_{x+1}+T_x\circ\theta_{T_{x+1}}.
$$
Then
$$
\Pi_xe_\beta(S_x)=u(x, x+1)u(x+1,x)<1.
$$
Repeating the above proof for $d=2$ with $S$ replaced by $S_x$
we can similarly obtain \eqref{sup-x-t} for $d=1$. We omit the
details.
\qed

\begin{lemma}\label{zero gauge}
Let $d\le 2$, $L=\Delta/2$ and $\beta\in {\bf K}_d^\infty$. If $\beta$ is subcritical, then
\begin{equation}\label{gauge-0}
\lim_{t\to\infty}\Pi_xe_\beta(t)=\Pi_{x}e_\beta(\infty)\equiv
0\quad\mbox{ in } \R^d.
\end{equation}
\end{lemma}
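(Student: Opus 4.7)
The plan is to upgrade Lemma \ref{sup-sup2} by combining its global $L^1$ bound with the cycle decomposition already used in its proof. I will write the argument for $d=2$; the case $d=1$ is completely analogous, with the cycle time $S_x=T_{x+1}+T_x\circ\theta_{T_{x+1}}$ replacing $S$. Throughout, let $\{S_n\}_{n\ge 0}$ be the sequence of cycle times from the proof of Lemma \ref{sup-sup2} and set $C:=\sup_{y\in\R^d}\Pi_y\sup_{s\ge 0}e_\beta(s)$, which is finite by that lemma.

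First I would extract a.s. convergence along the cycle times. Estimate \eqref{series-finit} from the proof of Lemma \ref{sup-sup2} gives
\[
\sum_{n=0}^\infty\sup_{x\in\R^d}\Pi_x e_\beta(S_n)<\infty,
\]
so for every $x$, monotone convergence yields $\sum_n e_\beta(S_n)<\infty$ $\Pi_x$-a.s.; in particular $e_\beta(S_n)\to 0$ $\Pi_x$-a.s.

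Next I would upgrade this subsequential convergence to the full limit. For $t\ge S_n$, the multiplicative cocycle identity $e_\beta(t)=e_\beta(S_n)\cdot e_\beta(t-S_n)\circ\theta_{S_n}$ gives
\[
\sup_{t\ge S_n}e_\beta(t)\;\le\;e_\beta(S_n)\,V_n,\qquad V_n:=\sup_{s\ge 0}e_\beta(s)\circ\theta_{S_n}.
\]
The strong Markov property at the stopping time $S_n$, combined with Lemma \ref{sup-sup2}, yields
\[
\Pi_x\bigl[e_\beta(S_n)V_n\bigr]\;=\;\Pi_x\bigl[e_\beta(S_n)\,\Pi_{\xi_{S_n}}\!\sup_{s\ge 0}e_\beta(s)\bigr]\;\le\;C\,\Pi_x e_\beta(S_n).
\]
Summing in $n$ and using \eqref{series-finit} gives $\sum_n\Pi_x[e_\beta(S_n)V_n]<\infty$, so $e_\beta(S_n)V_n\to 0$ $\Pi_x$-a.s. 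Since $S_n\uparrow\infty$ $\Pi_x$-a.s. (each cycle restarted from $\partial B_r$ has uniformly positive expected length), it follows that $\lim_{t\to\infty}e_\beta(t)=0$ $\Pi_x$-a.s.

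Finally, since $e_\beta(t)\le\sup_{s\ge 0}e_\beta(s)\in L^1(\Pi_x)$ by Lemma \ref{sup-sup2}, the dominated convergence theorem yields simultaneously $\lim_{t\to\infty}\Pi_x e_\beta(t)=0$ and $\Pi_x e_\beta(\infty)=0$, which is \eqref{gauge-0}. The main technical point is the control of the tail supremum $V_n$ after each cycle: no pathwise bound is available, so this step relies crucially on the \emph{global} supremum bound of Lemma \ref{sup-sup2} rather than the weaker pointwise bound of Lemma \ref{sup-sup}.
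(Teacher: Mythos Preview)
Your proof is correct and takes a somewhat different route from the paper's. Both arguments rest on the cycle decomposition introduced in the proof of Lemma~\ref{sup-sup2}, but they exploit it differently. The paper works purely at the level of expectations: for $d=2$ it combines the spectral radius bound $\|A_S^n\|\to 0$ with Fatou's lemma to show $\Pi_0 e_\beta(\infty)=0$ at a single point, and then invokes the positivity dichotomy of Lemma~\ref{Gauge-positive}(3) to conclude $g_\beta\equiv 0$; for $d=1$ it uses the one-step identity $\Pi_x e_\beta(\infty)=\Pi_x e_\beta(S_x)\cdot\Pi_x e_\beta(\infty)$ together with $\Pi_x e_\beta(S_x)<1$. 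The paper never establishes pathwise convergence of $e_\beta(t)$. Your argument instead proves the stronger pathwise statement $e_\beta(t)\to 0$ $\Pi_x$-a.s., by controlling the post-cycle supremum $\sup_{t\ge S_n}e_\beta(t)=e_\beta(S_n)V_n$ via the strong Markov property and the global $L^1$ bound of Lemma~\ref{sup-sup2}; this yields the conclusion for every $x$ at once without appealing to Lemma~\ref{Gauge-positive}, and makes the existence of $e_\beta(\infty)=\lim_t e_\beta(t)$ unambiguous. The trade-off is a slightly longer argument and the need to verify $S_n\uparrow\infty$, which the paper's route avoids.
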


\pf
By \eqref{sup-x-t} and by dominated convergence, it suffices to
show
\begin{equation}\label{gauge0}
\Pi_{x}e_\beta(\infty)= 0,\quad\forall x\in \R^d.
\end{equation}
We continue to use the notations in the proof of  Lemma \ref{sup-sup2}.
We first prove \eqref{gauge0} for dimension $d=2$.
Using the strong Markov property of $\xi$, applied at $\tau_{B_{R}}$,
and Fatou's lemma, we get
$$
\begin{array}{rl}\Pi_{0}e_\beta(\infty)=&\Pi_0\left[e_\beta(\xi_{\tau_{B_r}})
\Pi_{\xi_{\tau_{B_r}}}e_\beta(\infty)\right]\\
\le&\Pi_{0}\left[e_\beta(\tau_{B_r})
\lim_{n\to\infty}|(A_S^n)1(\xi_{\tau_{B_r}})|\right]\\
\le&[\Pi_{0}e_\beta(\tau_{B_r})]\lim_{n\to\infty}\|A_S^n\|\\
\le&\left[\Pi_0e_\beta(\tau_{B_r})\right]\lim_{n\to\infty}
(\~{\lambda}(\beta)+\delta)^n=0.
\end{array}
$$
Thus by Lemma \ref{Gauge-positive} in the Appendix,
 $\Pi_{x}e_\beta(\infty)\equiv 0$ in
$\R^2.$

Now let $d=1$. For any $x\in\R$, let $S_x$ be defined as in
proof of Lemma \ref{sup-sup2}. By the strong Markov property of
$\xi$ applied at $S_x$, we have, for any $x\in\R$,
$$
\Pi_xe_\beta(\infty)=\Pi_xe_\beta(S_x)\Pi_xe_\beta(\infty).
$$
Since $\Pi_xe_\beta(S_x)=u(x, x+1)u(x+1,x)<1$, the above equality yields
$\Pi_xe_\beta(\infty)=0$ for every $x\in\R$.
\qed

It follows from the two results above that, if $d\le 2$, $L=\Delta/2$, $\lambda_\infty(\beta)>0$,
$\beta-\lambda_\infty(\beta)\in {\bf K}^\infty_d$ and $\beta-\lambda_\infty(\beta)$ is subcritical,
then the assumptions of Theorem \ref{th2}(2) are satisfied.

\subsection{Compactly supported mass annihilation}

We conclude
with two simple examples which satisfy the assumptions of Theorem \ref{th2}(2).
In both cases we consider compactly supported mass annihilation terms.

We start with a two-dimensional example.
\begin{example}[d=2; constant annihilation in a compact set]\label{planar.BM.example}\rm
Let  $\xi$  be planar Brownian motion, and $\beta(x):=-\alpha \mathbf{1}_{K}(x)$
with $\alpha>0$ being a constant and $K\subset\R^2$ a compact with non-empty interior.
\end{example}
\begin{proposition}\label{p:5.12}
In this case weak extinction holds, whatever $k$ is.
\end{proposition}
\begin{remark}\rm  The point is that our result is true for {\it any} $k$. Indeed, it is easy
to show that extinction holds when $k$ is bounded from below
(even with $\beta\equiv 0$).
\end{remark}
\begin{proof}
It is well known that  $\beta$ is
subcritical (see, e.g., \cite[Theorem 1.4]{Murata:1984}). By
\cite[Corollary 2]{Chan:1994}, as $t\to\infty$,
$$
\Pi_x\left[\exp\left(\int^t_0\beta(\xi_s)\, \mathrm{d}s\right)\right]\sim c(\log t)^{-1},
$$
where $0<c=c(x,K,\alpha)$. Therefore, for any $x\in \R^2$, $\lambda_\infty(\beta)\ge
\lim_{t\to\infty}\frac{1}{t}\log\Pi_x e_\beta(t)=0$. It is obvious
that $\lambda_\infty(\beta)\le 0$. Then $\lambda_\infty =0$ and
$g_{\beta-\lambda_\infty }(x)\equiv 0$. Clearly,
\eqref{upper-domi} holds since $\beta\le 0$. Using again that $\beta\le 0$, we are done
by part (2) of Theorem \ref{th2}. \qed
\end{proof}

Finally, we discuss an example in one-dimension.
\begin{example}[d=1; compactly supported mass annihilation]\rm  Let  $\xi$ be a
Brownian motion in $\R$, and $\beta\le 0$  a continuous
function on $\R$ with compact support.
\end{example}
\begin{proposition}
Again, weak extinction  holds, whatever $k$ is.
\end{proposition}
\begin{proof}
It is well known that
$\beta$ is subcritical (see \cite{Reed:1978}).
By
\cite{Yamada:1986},
\begin{equation}\label{limit-beta}
\lim_{t\to\infty}t^{-1/2}\int^t_0\beta(\xi_s)\, \mathrm{d}s=
\eta\int_{-\infty}^{\infty}\beta(x)\mathrm{d}x,
\end{equation}
in distribution, where $\eta$ is a random variable with $\eta< 0$ a.s. This, along with
Jensen's inequality, implies that, abbreviating
$a:=\int_{-\infty}^{\infty}\beta(x)\, \mathrm{d}x,$
$$
\liminf_{t\to\infty}\left[\Pi_x\exp\left(\int^t_0\beta(\xi_s)\, \mathrm{d}s\right)\right]^{t^{-1/2}}\ge\lim_{t\to\infty}\Pi_x
\exp\left(t^{-1/2}\int^t_0\beta(\xi_s)\, \mathrm{d}s\right)=\Pi_x\exp(a\eta).
$$
 Hence,
$$
\liminf_{t\to\infty}t^{-1/2}\log\left[\Pi_x\exp\left(\int^t_0\beta(\xi_s)\, \mathrm{d}s\right)\right]\ge\log\Pi_x\exp(a\eta).
$$
Thus, for $f(t):=t^{-1}\log\Pi_x\exp\left(\int^t_0\beta(\xi_s)\, \mathrm{d}s\right)$, we have
$\liminf_{t\to\infty}f(t)\ge0.$
But $\beta\le 0$ implies that $\limsup_{t\to\infty}f(t)\le 0$, and so
$
\lambda_\infty =\lim_{t\to\infty}f(t)=0.
$
By \eqref{limit-beta} (or, by the recurrence of $\xi$),
$
g_{\beta-\lambda_\infty }(x)=\Pi_x\exp\left(\int^{\infty}_0
\beta(\xi_s)\, \mathrm{d}s\right)\equiv 0.
$
Again, $\beta\le 0$ implies \eqref{upper-domi}, and we finish as in the proof of
Proposition \ref{p:5.12}.
\qed
\end{proof}

\section{Proofs of Theorem \ref{th1} and Theorem \ref{th2}}

For any nonzero $\mu\in {M}(\R^d)$, define
\begin{equation}
\Pi_{\mu}=\int_D\Pi_{x}\,\mu(\mathrm{d}x).
\end{equation}

The following result is \cite[Lemma 1.5]{Dynkin:1993}.

\begin{lemma}\label{rewrite-int}
The equation \eqref{inhom-int} is equivalent to
\begin{equation}\label{inhom-int2}
u(t,x)+ \Pi_{x}\int^{t}_0e_{\beta}(s)k(\xi_s)(u(t-s,
\xi_s))^2\mathrm{d}s=\Pi_{x}(e_{\beta}(t)f(\xi_{t})).
\end{equation}
Moreover, $u$ is the minimal non-negative solution to \eqref{inhom-int} if and only if $u$ is the minimal non-negative solution to \eqref{inhom-int2}.
\end{lemma}

Combining \eqref{inhom-fund} and \eqref{inhom-int2}, we get
the following expectation and variance formulae: for any bounded nonnegative function
$f$ on $\R^d$ and any nonzero $\mu\in M(\R^d)$,
\begin{equation}\label{Exp}
\P_{ \mu}\langle f, X_{t}\rangle
=\Pi_{\mu}(f(\xi_{t})e_{\beta}(t))
\end{equation}
and
\begin{equation}\label{variances}
\mbox{Var}_{ \mu}\langle f, X_{t}\rangle
=\Pi_{ \mu}\left(\int^{t}_{0}e_{\beta}(s)k(\xi_s)
2[\Pi_{\xi_s}e_{\beta}(t-s)f(\xi_{t-s})]^2\mathrm{d}s\right),
\end{equation}
where $\mbox{Var}_{\mu}$ stands for variance under $\P_{\mu}$.

\begin{lemma}\label{domi-g(t)} If $\lambda_\infty>0$, then
\begin{equation}
\liminf_{t\to\infty}\|P^\beta_t1\|_\infty^{-1}\int^t_0\|P^\beta_s1\|_\infty \, \mathrm{d}s<\infty.
\end{equation}
\end{lemma}

\begin{proof}
For convenience, we denote $\|P^\beta_t1\|_\infty$ by $h(t)$ in this proof.
Suppose that the statement is false. Then
$$
\lim_{t\to\infty}\frac{\int^t_0h(s) \, \mathrm{d}s}{h(t)}=\infty,
$$
and so for any $K>0$, there exists $T_K>0$ such that for $t>T_K$,
$$
\frac{\int^t_0h(s) \, \mathrm{d}s}{h(t)}>K,
$$
i.e.,
$$
h(t)<\frac{1}{K}\int^t_0h(s)\, \mathrm{d}s=\alpha+\frac{1}{K}\int^t_{T_K}h(s)\, \mathrm{d}s,
$$
where $\alpha=\frac{1}{K}\int^{T_K}_0h(s)\, \mathrm{d}s$.
By Gronwall's lemma, we get
$$
h(t)\le \alpha \left(e^{(t-T_2)/K}-1\right).
$$
However, if $\frac1{K}<\frac{\lambda_\infty}{2}$ ($K>\frac{2}{\lambda_\infty}$),
then this contradicts the following easy consequence of the definition \eqref{def-Lambda} of $\lambda_\infty$:
$$
\lim_{t\to\infty}\frac{\log h(t)}{t}\ge \frac{\lambda_\infty}{2}.
$$
This contradiction proves the lemma.
\qed
\end{proof}
\subsection{Proof of Theorem \ref{th1}}
For the proof of the theorem, we will need the following slight generalization of
Doob's maximal inequality for submartingales.

\begin{lemma}\label{modified.Doob}
Assume that $T\in (0,\infty)$, and that the
non-negative, right continuous,
adapted  process $(\{M_t\}_{0\le t\le T},\{\mathcal{F}_t\}_{0\le t\le T},\mathbf P)$
satisfies that there exists an $a>0$  such that
$$\mathbf P(M_t\mid \mathcal{F}_s)\ge a M_s,\  0\le s<t\le T.$$
Then, for every $\alpha\in(0,\infty)$ and $0\le S\le T$,
 $$
 \mathbf{P}\left(\sup_{t\in [0,S]}M_t\ge \alpha\right)\le (a\alpha)^{-1}\mathbf P[M_S].
 $$

\end{lemma}
\begin{proof} Looking at the proof of Doob's inequality
(see \cite[Theorems 5.2.1 and 7.1.9]{Stroockbook} and their proofs),
one can see that, when the submartingale property is replaced by our assumption,
the whole proof goes through, except that now one has to include a factor
 $a^{-1}$ on the right hand side.$\hfill\square$
\end{proof}

\bigskip
\noindent{\bf Proof of Theorem \ref{th1}:}
(1) By a standard Borel-Cantelli argument, it suffices to prove that with an appropriate
choice of $T>0$, it is true that for any given $\epsilon>0$,
\begin{equation}\label{B-C}
\sum_n \P_\mu\left(\sup_{s\in[0,T]}
e^{-\lambda (nT+s)}\|X_{nT+s}\|>\epsilon\right)<\infty.
\end{equation}
Pick
\begin{equation}
\gamma \ge  -\lambda.\label{gamma.is.large}
\end{equation}
Then
\begin{equation}\label{first.inequality}
\P_\mu\left(\sup_{s\in[0,T]}e^{-\lambda (nT+s)}\|X_{nT+s}\|>\epsilon\right)\le
\P_\mu\left(\sup_{s\in[0,T]}e^{\gamma (nT+s)}\|X_{nT+s}\|>\epsilon\cdot
e^{(\lambda+\gamma)nT}\right).
\end{equation}
Let $M^{(n)}_t:=e^{\gamma (nT+t)}\|X_{nT+t}\|$ for $t\in[0,T].$
Pick a number $0<a<1$ and fix it.
Let $\mathcal{F}^{(n)}_{s}:=\sigma(X_{nT+r}: r\in [0, s])$.
If we show that for a sufficiently small $T>0$ and all $n\ge 1$, the process
$\{M^{(n)}_t\}_{0\le t \le T}$
satisfies that for all $0<s<t<T$,
\begin{equation}\label{almost.submartingale}
\mathbb P_{\mu}(M^{(n)}_t\mid \mathcal{F}^{(n)}_s)\ge a M^{(n)}_s,
\end{equation}
then, by using  Lemma \ref{modified.Doob},  we can continue (\ref{first.inequality}) with
\begin{eqnarray*}
\P_\mu\left(\sup_{s\in[0,T]}e^{-\lambda (nT+s)}\|X_{nT+s}\|>\epsilon\right)
&\le&
\frac{1}{a\epsilon} e^{-(\lambda+\gamma)nT}\P_{\mu}
\left[e^{\gamma(n+1)T}\|X_{(n+1)T}\|\right]
\\
&=&\frac{1}{a\epsilon} e^{(\lambda+\gamma)T}e^{-\lambda(n+1)T}
\P_{\mu} \|X_{(n+1)T}\|\\
&\le&\frac{\|\mu\|}{a\epsilon} e^{(\lambda+\gamma)T}
e^{-\lambda(n+1)T}\|P^{\beta}_{(n+1)T}1\|_{\infty}.
\end{eqnarray*}
Since $\lambda>\lambda_{\infty}$ and
$\|P^{\beta}_{(n+1)T}1\|_{\infty}=\exp(\lambda_{\infty}(n+1)T+o(n))$ as $n\to\infty$,
therefore (\ref{B-C}) holds.

It remains to check (\ref{almost.submartingale}).  Let $0<s<t<T$. Using the Markov and
branching properties at
time $nT+s$,
\begin{eqnarray}\label{long}
\nonumber \P_{\mu}\left[M^{(n)}_t\mid \mathcal{F}^{(n)}_s\right]
&=&\P_{X_{nT+s}}e^{\gamma(nT+t)}\|X_{t-s}\|=
\left\langle \P_{\delta_{x}} e^{\gamma(nT+t)}\|X_{t-s}\|,\
X_{nT+s}(\mathrm{d}x)\right\rangle \\
&=&\left\langle \P_{\delta_{x}} e^{\gamma(t-s)}\|X_{t-s}\|,\
e^{\gamma(nT+s)}X_{nT+s}(\mathrm{d}x)\right\rangle.
\end{eqnarray}
At this point we are going to determine $T$ as follows.
According to the assumption  $\beta\in\mathbf{K}(\xi)$, $$\lim_{t\downarrow 0}
\sup_{x\in \R^d}\Pi_x\int_0^t |\beta| (\xi_s)\, \mathrm{d}s=0.$$  Pick $T>0$ such that
$$
\gamma t +\Pi_x \int_0^t \beta (\xi_s)\, \mathrm{d}s\ge \log a,
$$
for all $0<t< T$ and all $x\in \R^d$.
By Jensen's inequality,
$$
\gamma t+\log\Pi_x \exp\left(\int_0^t \beta (\xi_s)\, \mathrm{d}s\right)\ge \log a,
$$
and thus
$$
\mathbb P_{\delta_{x}}e^{\gamma t}\|X_t\|=e^{\gamma t}\Pi_x \exp\left(\int_0^t
\beta (\xi_s)\, \mathrm{d}s\right)\ge a
$$
holds too, for all $0<t< T$ and all $x\in \R^d$.
Returning to (\ref{long}), for $0<s<t<T$,
$$
\P_{\mu}[M^{(n)}_t\mid \mathcal{F}^{(n)}_s]\ge a \left \langle 1,\
 e^{\gamma(nT+s)}X_{nT+s}\right\rangle=aM^{(n)}_s,\ a.s.,
$$
yielding (\ref{almost.submartingale}).

(2)  First note that to prove \eqref{limsup=infty2} it suffices
to prove that there exists $c_0>0$ such that for all $K>0$,
\begin{equation}
\label{limsup>K}
\P_{\mu}\left(\limsup_{t\to\infty}e^{-\lambda t}
\|X_t\|\ge K\right)\ge c_0.
\end{equation}
Since
$$
\left\{\limsup_{t\to\infty}e^{-\lambda t}\|X_t\|\ge K\right\}
\supseteq\limsup_{t\to\infty}\{e^{-\lambda t}\|X_t\|\ge K\},
$$
we have by the reverse Fatou lemma,
\begin{equation}\label{domi-below1}\begin{array}{rl}
\P_{\mu}\left(\displaystyle\limsup_{t\to\infty}e^{-\lambda t}\|X_t\|\ge
K\right)&\ge\displaystyle\limsup_{t\to\infty}\P_{\mu}(e^{-\lambda t}\|X_t\|\ge
K)\\&=\displaystyle\limsup_{t\to\infty}\P_{\mu}(e^{-\lambda t}\|X_t\|-K\ge 0).\end{array}
\end{equation}
The assumption
$\lambda<\lambda_\infty$ implies that
 \begin{equation}\label{limit0}
\lim_{t\to\infty}\P_{\mu}(e^{-\lambda t}\|X_t\|)=\lim_{t\to\infty}e^{-\lambda
 t}\Pi_{\mu}e_{\beta}(t)=\infty.
 \end{equation}
Thus $\P_{\mu}e^{-\lambda t}\|X_t\|>K$ for large $t$.
It  follows easily from the Cauchy-Schwarz inequality (see, for instance,
\cite[Chap. 1, Ex. 3.8]{Durrett:1996}) that
for any nonnegative random variable $Y$ with finite second moment, on a probability
space $(\Omega, {\cal G}, P)$, and for any $a>0$,
$$
P(Y-a\ge 0)\ge \frac{(PY -a)^2}{P(Y^2)}.
$$
Applying the above inequality (`Paley-Zygmund inequality') with
$Y=e^{-\lambda t}\|X_t\|$ and $a=K$, we get
\begin{equation}\label{domi-below2}
\P_{\mu}(e^{-\lambda t}\|X_t\|-K\ge 0)\ge\frac{\left(\P_{\mu}
e^{-\lambda t}\|X_t\|-K\right)^2}{\P_{\mu}(e^{-\lambda t}\|X_t\|)^2}.
\end{equation}
By  \eqref{Exp} and \eqref{variances}, \eqref{domi-below1} and
\eqref{domi-below2} yield
\begin{equation}\label{domi-below3}
\begin{array}{rl}&\P_{\mu}
(\displaystyle\limsup_{t\to\infty}e^{-\lambda t}\|X_t\|\ge K)\\\ge&
\displaystyle\limsup_{t\to\infty}\displaystyle
\frac{(\Pi_{\mu}e^{-\lambda t}e_{\beta}(t)-K)^2}
{(\Pi_{\mu}e^{-\lambda t}e_{\beta}(t))^2+
2e^{-2\lambda t}\Pi_{\mu}\displaystyle\int^t_0e_{\beta}(s)k(\xi_s)
[\Pi_{\xi_s}e_{\beta}(t-s)]^2\, \mathrm{d}s}\\
=&\displaystyle\limsup_{t\to\infty}\left(1-K\frac{e^{\lambda
t}}{\Pi_{\mu}e_{\beta}(t)}\right)^{2}\left(1+2\displaystyle
\frac{\Pi_{\mu}\left(e_{\beta}(t)\int^t_0k(\xi_s)\Pi_{
\xi_s}e_{\beta}(t-s)\, \mathrm{d}s\right)}{ (\Pi_{\mu}e_{\beta}(t))^2}
\right)^{-1}\\
=& \displaystyle\limsup_{t\to\infty}\left(1+2\displaystyle
\frac{\Pi_{\mu}\left(e_{\beta}(t)\int^t_0k(\xi_s)\Pi_{
\xi_s}e_{\beta}(t-s)\, \mathrm{d}s\right)}{ (\Pi_{\mu}e_{\beta}(t))^2}
\right)^{-1}.\end{array}
\end{equation}
Note that
$$
\Pi_{\xi_s}e_{\beta}(t-s)\le \|P^{\beta}_{(t-s)}1\|_\infty.
$$
Thus we have
$$
\begin{array}{rl}\Pi_{\mu}\left(e_{\beta}(t)\displaystyle\int^t_0k(\xi_s)\Pi_{
\xi_s}e_{\beta}(t-s)\, \mathrm{d}s\right) \le&
\|k\|_\infty\Pi_{\mu}e_{\beta}(t)\left[\displaystyle\int^t_0
\|P^{\beta}_{t-s}1\|_\infty \, \mathrm{d}s\right]\\
=&\|k\|_\infty\Pi_{\mu}e_{\beta}(t)\left[\displaystyle\int^t_0
\|P^{\beta}_{s}1\|_\infty \, \mathrm{d}s\right]
.\end{array}
$$
So, we have for every $K>0$,
\begin{equation}\label{d3}
\P_{\mu}
(\displaystyle\limsup_{t\to\infty}e^{-\lambda t}\|X_t\|\ge K)\ge\left(1+2\liminf_{t\to\infty}
\frac{\|k\|_\infty\|P^{\beta}_t1\|_\infty^{-1}\int^t_0\|P^{\beta}_s1\|_\infty \, \mathrm{d}s}{
\|P^{\beta}_t1\|_\infty^{-1}\Pi_{\mu}e_{\beta}(t)}\right)^{-1}.
\end{equation}

We now consider the numerator and denominator of the right-hand side of \eqref{d3} separately.
$$
\liminf_{t\to\infty}\|P^{\beta}_t1\|_\infty^{-1}\int^t_0\|P^{\beta}_s1\|_\infty \, \mathrm{d}s<\infty.
$$
By Fatou's lemma and  \eqref{uniform-bd},
$$
\liminf_{t\to\infty}\|P^{\beta}_t1\|_\infty^{-1}\Pi_{\mu}e_{\beta}(t)\ge\left\langle
\mu, \liminf_{t\to\infty}\|P^{\beta}_t1\|_\infty^{-1}\Pi_{\cdot}e_{\beta}(t)\right\rangle>0.
$$
Now combining \eqref{d3} and Lemma \ref{domi-g(t)}, we arrive at
\eqref{limsup>K}.
\qed
\subsection{Proof of Theorem \ref{th2}}

(1)
Using Fatou's lemma and \eqref{g(t)-to-infty},
we get
$$\liminf_{t\to\infty}e^{-\lambda_\infty
t}\Pi_{\mu}e_{\beta}(t)=\liminf_{t\to\infty}\Pi_{\mu}
e_{\beta-\lambda_\infty}(t)\ge\left\langle
\liminf_{t\to\infty}\Pi_{\cdot}e_{\beta-\lambda_\infty}(t),
\mu\right\rangle=\infty,$$
which means that \eqref{limit0} holds with $\lambda$ replaced by
$\lambda_\infty$. So the proof of Theorem \ref{th1}(2) works with $\lambda$
replaced by $\lambda_\infty$.

(2)
By \eqref{Exp}, we have
\begin{equation}\label{expection}
\P_{\mu}[\exp(-\lambda_\infty t)\|X_t\|]=
\Pi_{\mu}e_{\beta-\lambda_\infty}(t).
\end{equation}
Letting $t\to\infty$ and using Fatou's lemma, we get
\begin{equation}\label{liminf-expec}
\P_{\mu}(\liminf_{t\to\infty}\exp(-\lambda_\infty t)\|X_t\|)\le \liminf_{t\to\infty}
\Pi_{\mu}e_{\beta-\lambda_\infty}(t).
\end{equation}
Note that $\Pi_{\mu}e_{\beta-\lambda_\infty}(t)=\langle
\Pi_{\cdot}e_{\beta-\lambda_\infty}(t), \mu\rangle$.
Using \eqref{upper-domi} and the assumption that $g_{\beta-\lambda_\infty}\equiv 0$
in $\R^d$, we get
$$
\lim_{t\to \infty}\Pi_{\mu}e_{\beta-\lambda_\infty}(t)=\langle
\lim_{t\to\infty}\Pi_{\cdot}e_{\beta-\lambda_\infty}(t), \mu\rangle=\langle
g_{\beta-\lambda_\infty}, \mu\rangle=0,
$$
where in the first equality we used the fact
$\Pi_{\cdot}e_{\beta-\lambda_\infty}(t)\le
\sup_{x\in\R^d}\Pi_{x}(\sup_{t\ge 0}e_{\beta-\lambda_\infty }(t))<\infty$,
which follows from \eqref{upper-domi}, and the fact that $\mu$ is finite measure,
and in the second equality we used the fact $e_{\beta-\lambda_\infty}(t)
\le\sup_{t\ge 0}e_{\beta-\lambda_\infty}(t)<\infty$ $\Pi_x$-a.s.
for any $x\in\R^d$.
Hence by
\eqref{liminf-expec} we get
$$
\P_{\mu}\left(\liminf_{t\to\infty}\exp(-\lambda_\infty t)\|X_t\|=0\right)=1,
$$
which implies  \eqref{liminf=0}.

Finally, when $\beta\le 0$, trivially $\lambda_{\infty}\le 0$; hence
$
\P_{\mu}(\liminf_{t\to\infty}\|X_t\|=0)=1.
$
On the other hand, $\|X\|$ is a supermartingale by the expectation formula and
the branching Markov property, and thus, $\lim_{t\to\infty}\|X_t\|$ exists $\P_{\mu}$-a.s.
Hence, we can improve the liminf to a limit. \qed

\section {\bf Proofs of  Theorems \ref{th3} and  \ref{main2}}

\subsection{Proof of Theorem \ref{th3}}
We start with a lemma.
\begin{lemma}\label{martingale}  Assume that $\beta\in {\bf K}(\xi)$ and
that $h>0$ is a bounded solution to
$$
(L+\beta-\lambda_\infty)h=0\ \text{in}\
\R^d
$$
in the sense of distributions.
Let  $\mu\in {M}(\R^d)$ be nonzero and ${\cal F}_t
:=\sigma\{X_r, r\le t\}$. Then  the process
$(\{e_{-\lambda_\infty}(t)\langle h, X_t\rangle \}_{t\ge 0},
 \{  \mathcal{F}_{t} \}_{t\ge 0}
 ,\P_{\mu})$
is a positive martingale.
\end{lemma}

\pf
Recall that $D_n=B(0, n)$ and $\tau_n$ is the first exit time of $\xi$ from $D_n$.
Since $h$ is harmonic with respect to the operator $L+\beta-\lambda_\infty$, we have
\begin{equation}\label{harmonic}
h(x)=\Pi_{x}\left[e_{\beta-\lambda_\infty}(t\wedge \tau_n)
h(\xi_{t\wedge \tau_n})\right],\quad\mbox{ for
every } n\ge 1 \mbox{ and } t\ge 0,
\end{equation}
see the proof of \cite[Lemma 2.1]{SoV}. Since $h$ is bounded,  bounded convergence yields \begin{equation}\label{harmonic2}
h(x)=\Pi_{x}\left[e_{\beta-\lambda_\infty}(t)h(\xi_t)\right],\quad\mbox{ for
every }t\ge 0.
\end{equation}
By the branching and Markov properties, for $r\le s<t$,
we have
\begin{equation}
\begin{array}{rl}&\P_{\mu}(e_{-\lambda_\infty}(t)\langle h, X_t\rangle|{\cal
F}_s)\\=& e_{-\lambda_\infty}(t)\P_{ X_s}\langle h, X_{t-s}\rangle\\
=&e_{-\lambda_\infty}(t)\langle\Pi_{
\cdot}\left(e_{\beta}(t-s)h(\xi_{t-s})\right),
X_s\rangle\\
=&e_{-\lambda_\infty}(t)\langle\Pi_{\cdot}\left(e_{\beta}(
t-s)h(\xi_{t-s})\right),
X_s\rangle\\
=&e_{-\lambda_\infty}(s)\langle h,
X_s\rangle,\end{array}
\end{equation}
proving the assertion. \qed

\noindent{\bf Proof of Theorem \ref{th3}:} Suppose $\mu\in M_c(\R^d)$. Since $M^h$ defined by $$M^h_t:=\exp(-\lambda_\infty t)\langle h, X_t\rangle$$  is a nonnegative $\P_\mu$-martingale,
 $\lim_{t\to\infty}M^h_t$ exists and is also finite $\P_{\mu}$-a.s.
By the martingale property, we have
$$
\P_{\mu}M_t^h=\exp(-\lambda_\infty
t)\Pi_{\mu}[e_{\beta}(t)h(\xi_t)] =\langle h,
\mu\rangle.
$$
It follows from \eqref{<infty2} and Lemma \ref{inequ-green} in  Appendix B that
$$
\Pi_{\mu}\left[\displaystyle\int^\infty_0e_{\beta-2\lambda_\infty}(s)k(\xi_s)
h^2(\xi_{s}))\, \mathrm{d}s \right]\le C^2 \Pi_{\mu}\left[\displaystyle
\int^\infty_0e_{\beta-2\lambda_\infty}(s)k(\xi_s)
)\, \mathrm{d}s \right]<\infty,
$$ where $C$ is a positive constant such that $h(x)\le C$ for all $x\in \R^d$.
Thus by the variance formula \eqref{variances} and by
\eqref{harmonic}, we have
$$
\begin{array}{rl}&\P_{\mu}\left[M_t^h\right]^2\\
=&\langle h, \mu\rangle^2+\exp(-2\lambda_\infty
t)\Pi_{\mu}\left[\displaystyle\int^t_0
e_{\beta}(s)k(\xi_s)[\Pi_{\xi_s}(e_{\beta}(t-s)h(\xi_{t-s}))]^2\, \mathrm{d}s\right]\\
=&\langle h,
\mu\rangle^2+\Pi_{\mu}\left[\displaystyle\int^t_0
e_{\beta}(s)\exp(-2\lambda_\infty
s)k(\xi_s)[\Pi_{\xi_s}(e_{\beta-\lambda_\infty}(t-s)h(\xi_{t-s}))]^2\, \mathrm{d}s\right]
\\=&\langle h,
\mu\rangle^2+\Pi_{\mu}\left[\displaystyle\int^t_0e_{\beta-2\lambda_\infty}(s)k(\xi_s)
h^2(\xi_{s}))\, \mathrm{d}s \right].\end{array}
$$
By  the $L^2$-convergence theorem,
$M_t^h$ converges to some
$\eta$  in $L^2(\P_{\mu})$. In particular,
$$
0<\P_{\mu}\eta^2=\langle h,
\mu\rangle^2+\Pi_{\mu}\displaystyle\int^{\infty}_0e_{\beta-2\lambda_\infty}(s)k(\xi_s)
h^2(\xi_{s}))\, \mathrm{d}s<\infty,
$$
and therefore,
\begin{equation}\label{eta}
\P_{\mu}\left(\eta<\infty\right)=1,
\quad
\text{and}
\quad
\P_{\mu}\left(\eta=0\right)<1.
\end{equation}
It is obvious that $\P_{\mu}\left(\eta=0\right)<1$ implies that $\P_\mu(\|X_t\|>0,\ \forall t>0)>0$.

If $h$ satisfies \eqref{bounded-h}, then \eqref{eta} implies \eqref{limsup=infty} and \eqref{limsup=0}.
\qed

\begin{remark}\rm
Theorem \ref{th3} says that, under condition  \eqref{<infty2}, not only the Kesten-Stigum
Theorem holds ({\it i.e}., the martingale $M^h_t=
e^{-\lambda_\infty t}\langle h, X_t\rangle$ converges  in $L^1(\P_{\mu})$
as $t\to\infty$), but it can be upgraded to convergence in $L^2(\P_{\mu})$.
We plan to find a  necessary and sufficient condition in an upcoming paper.

Using the `spine' method developed in  Engl\"ander and Kyprianou \cite{Englander:2004},
we can  give an alternative proof of Theorem \ref{th3},
but with the weaker conclusion that  the martingale $M^h_t=e^{-\lambda_\infty t}\langle h, X_t\rangle$ converges
in $L^1(\P_{\mu})$ as $t\to\infty$.

\end{remark}

\subsection{Preparation for the proof of Theorem \ref{main2}}
In the remainder of this section, we suppose $\lambda_\infty=0$ and that $h>0$
is a bounded solution to $(L+\beta)u=0$ in $\R^d$ in the sense of distributions. For $c>0$, put
\begin{equation}
u_{ch}(t,x):=-\log
\P_{\delta_x}\exp(-c\langle h, X_t\rangle),
\end{equation}
then $u_{ch}(t,x)$ is a solution of the following integral equation:
\begin{equation}
u_{ch}(t,x)+
\Pi_{x}\int^{t}_0\left[k(\xi_r)\left(u_{ch}(t-r,
\xi_r)\right)^2-\beta(\xi_r)u_{ch}(t-r, \xi_r)\right]\mathrm{d}r=c\Pi_{x}
h(\xi_t).
\end{equation}
By Lemma \ref{rewrite-int}, the above integral equation is equivalent to
\begin{equation}\label{u_ch-equi2}
u_{ch}(t,x)+
\Pi_{x}\int^{t}_0e_{\beta}(r)k(\xi_r)\left[u_{ch}(t-r,
\xi_r)\right]^2\mathrm{d}r=c\Pi_{x}\left[
e_{\beta}(t)h(\xi_t)\right].
\end{equation}
Since $h$ is a bounded positive solution
to $(L+\beta)u=0$, we have
$$
\Pi_{x}\left[e_{\beta}(t) h(\xi_t)\right]
=h(x).
$$
Thus \eqref{u_ch-equi2} can be rewritten as
\begin{equation}\label{u_ch-equi3}
u_{ch}(t,x)+
\Pi_{x}\left[\int^{t}_0e_{\beta}(r)k(\xi_r)\left[u_{ch}(t-r,
\xi_r)\right]^2\mathrm{d}r\right]=ch(x).
\end{equation}
In particular,
\begin{equation}\label{domi}
u_{ch}(t,x)\le ch(x).
\end{equation}
Put
\begin{equation}\label{defu_ch2}
u_{ch}(x):=-\log
\P_{\delta_x}\exp(-c\lim_{t\to\infty}\langle h,
X_t\rangle).
\end{equation}
By Lemma \ref{martingale}, under $\P_{\mu}$,
$\exp(-c\langle h, X_t\rangle)$, $t\ge 0$ is a bounded
submartingale. Thus $u_{ch}(t, x)$ is non-increasing in $t$. Hence,
by the dominated convergence theorem, for every $x\in \R^d$,
$$
u_{ch}(t,x)\downarrow u_{ch}(x)\quad\mbox{ as }t\uparrow\infty.
$$
Note that if $k$ and $\beta$ are radial functions, and  if $L$ is
radial, then $u_{ch}(\cdot)$ is a radial function, i.e.,
$$
u_{ch}(x)=u_{ch}(\|x\|).
$$

\begin{lemma}(1) For any $x\in \R^d$ and $r>0$,
$$
u_{ch}( x)\le\Pi_{x}(u_{ch}(\xi_{\tau_{B(x, r)}})e_{\beta}(
\tau_{B(x, r)})).
$$

(2) If $L$, $k$ and $\beta$ are radial, then
\begin{equation}\label{D-u_ch-above2}
u_{ch}(x)=u_{ch}(\|x\|)\le u_{ch}(R) \Pi_{x}(e_{\beta}(\tau_{B(0,R)})),\quad \|x\|<R.
\end{equation}
\end{lemma}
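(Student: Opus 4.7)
The plan is to derive a stopped version of the cumulant integral equation \eqref{u_ch-equi2}, drop a nonnegative term to obtain a sub-mean-value inequality for $u_{ch}(t,\cdot)$, and then pass to the limit $t\to\infty$. More precisely, for any bounded stopping time $\sigma$, I would apply \eqref{u_ch-equi2} at the point $(t-\sigma\wedge t,\xi_{\sigma\wedge t})$, multiply by $e_\beta(\sigma\wedge t)$, take $\Pi_x$-expectation, and invoke the strong Markov property of $\xi$ together with the multiplicative property $e_\beta(t)=e_\beta(\sigma\wedge t)\cdot (e_\beta(t-\sigma\wedge t)\circ\theta_{\sigma\wedge t})$ and the splitting $\int_0^t=\int_0^{\sigma\wedge t}+\int_{\sigma\wedge t}^{t}$ in the nonlinear term of \eqref{u_ch-equi2}. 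Subtracting the resulting identity from \eqref{u_ch-equi2} itself produces
\begin{equation*}
u_{ch}(t,x)+\Pi_x\int_0^{\sigma\wedge t}e_\beta(r)k(\xi_r)[u_{ch}(t-r,\xi_r)]^2\,\mathrm{d}r=\Pi_x\bigl[e_\beta(\sigma\wedge t)\,u_{ch}(t-\sigma\wedge t,\xi_{\sigma\wedge t})\bigr].
\end{equation*}
Since $k\ge 0$, the integral on the left is nonnegative and can be dropped to give $u_{ch}(t,x)\le\Pi_x\bigl[e_\beta(\sigma\wedge t)\,u_{ch}(t-\sigma\wedge t,\xi_{\sigma\wedge t})\bigr]$.

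For part (1), I would take $\sigma=\tau_{B(x,r)}$, which is $\Pi_x$-a.s.\ finite because the uniformly elliptic diffusion $\xi$ leaves any bounded open set in finite time. Letting $t\to\infty$, the left-hand side decreases monotonically to $u_{ch}(x)$. On the right, $\sigma\wedge t\to\sigma$ and, using the monotonicity $u_{ch}(s,\cdot)\downarrow u_{ch}(\cdot)$ as $s\to\infty$ established just before the lemma, we get $u_{ch}(t-\sigma\wedge t,\xi_{\sigma\wedge t})\to u_{ch}(\xi_\sigma)$ pointwise. The bound \eqref{domi} together with boundedness of $h$ gives $u_{ch}(s,y)\le c\|h\|_\infty$, so the integrand is dominated by $c\|h\|_\infty\,e_{|\beta|}(\tau_{B(x,r)})$; by Khasminski's inequality for $\beta\in\mathbf{K}(\xi)$ applied to the exit time from the bounded set $B(x,r)$, this majorant is $\Pi_x$-integrable. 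Dominated convergence then delivers (1).

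For part (2), the radial assumptions on $L$, $k$ and $\beta$ make $u_{ch}$ a radial function, as already observed. I would repeat the argument of part (1) verbatim but with $\sigma=\tau_{B(0,R)}$ (finite $\Pi_x$-a.s.\ when $\|x\|<R$), yielding
\begin{equation*}
u_{ch}(x)\le\Pi_x\bigl[e_\beta(\tau_{B(0,R)})\,u_{ch}(\xi_{\tau_{B(0,R)}})\bigr].
\end{equation*}
Path-continuity of $\xi$ forces $\|\xi_{\tau_{B(0,R)}}\|=R$ $\Pi_x$-a.s., and radial symmetry of $u_{ch}$ then gives $u_{ch}(\xi_{\tau_{B(0,R)}})=u_{ch}(R)$, a deterministic constant that can be pulled out of the expectation, yielding \eqref{D-u_ch-above2}.

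The main obstacle is the careful derivation of the stopped identity, which requires combining the strong Markov property with the multiplicative structure of $e_\beta$ for a Kato-class potential; once this is in hand, both parts are routine. A minor technicality is the handling of the event $\{\sigma>t\}$, on which $t-\sigma\wedge t=0$ and $u_{ch}(0,\cdot)=ch(\cdot)$ is bounded, so the same uniform majorant $c\|h\|_\infty\,e_{|\beta|}(\sigma)$ controls the integrand and the dominated-convergence argument goes through without modification.
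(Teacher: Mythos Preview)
Your route via the stopped cumulant equation is genuinely different from the paper's. The paper never passes through $u_{ch}(t,\cdot)$ and the integral equation at all: it applies the \emph{special Markov property of the superprocess} at the first exit from the ball to write
\[
\exp(-u_{ch}(x))=\P_{\delta_x}\exp\langle -u_{ch},\,X_{\tau_{B(x,r)}}\rangle,
\]
and then a single application of Jensen's inequality to the convex map $z\mapsto e^{-z}$ gives $u_{ch}(x)\le\P_{\delta_x}\langle u_{ch},X_{\tau_{B(x,r)}}\rangle=\Pi_x[u_{ch}(\xi_{\tau_{B(x,r)}})e_\beta(\tau_{B(x,r)}))]$. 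Part~(2) is then literally the same computation with $B(0,R)$. So the paper's proof is a two-line argument that uses a heavier structural tool (the special Markov property for exit measures), whereas your plan stays entirely at the level of the underlying diffusion and the mild equation. That is a reasonable and instructive alternative, and your stopped identity is derived correctly.

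There is, however, a real gap in your limiting step. You dominate the integrand by $c\|h\|_\infty\,e_{|\beta|}(\tau_{B(x,r)})$ and assert that Khasminski's inequality makes this $\Pi_x$-integrable. Khasminski's lemma only yields $\sup_y\Pi_y e_{|\beta|}(\tau_B)<\infty$ when $\sup_{y\in B}\Pi_y\int_0^{\tau_B}|\beta(\xi_s)|\,\mathrm{d}s<1$, which is \emph{not} implied by $\beta\in\mathbf{K}(\xi)$ for an arbitrary ball: take $\beta\equiv C$ with $C$ larger than the principal Dirichlet eigenvalue of $B(x,r)$ to see that $\Pi_x[e^{C\tau_{B(x,r)}}]=\infty$. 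Since Part~(2) is eventually used with $R\to\infty$, you cannot restrict to small balls. One can repair the argument, but it takes more than you indicate: use instead the pointwise bound $u_{ch}(s,y)\le ch(y)$ from \eqref{domi}, the martingale identity $h(x)=\Pi_x[e_\beta(t\wedge\sigma)h(\xi_{t\wedge\sigma})]$ from \eqref{harmonic}, and the positivity of $h$ on the compact sphere $\partial B$ to get $\Pi_x[e_\beta(\sigma)h(\xi_\sigma)]\le h(x)<\infty$; then split $\{\sigma\le t\}$ versus $\{\sigma>t\}$ and show separately that the contribution from $\{\sigma>t\}$ vanishes (this last point needs that the stopped $h$-martingale is closed, which is an extra argument). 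The paper's Jensen-based proof sidesteps all of this because it works directly with the limiting quantity $u_{ch}$ and never needs a dominating function.
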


\pf  (1) By the special Markov
property, for every fixed $x\in\R^d$, one has
$$
\begin{array}{rl}\exp(-u_{ch}(x))=&\P_{\delta_x}
\exp(-c\lim_{t\to\infty}\langle h, X_t\rangle)\\
=&\P_{\delta_x}\left(P_{X_{\tau_{B(x, r)}}}\exp(-c\lim_{t\to\infty}\langle
h, X_t\rangle)\right)\\
=&\P_{\delta_x}\exp\langle -u_{ch},
X_{\tau_{B(x, r)}}\rangle.\end{array}
$$
By Jensen's inequality,
$$
\exp(-u_{ch}(x))\ge \exp(-\P_{\delta_x}\langle u_{ch},
X_{\tau_{B(x, r)}}\rangle)=\exp[-\Pi_{x}(u_{ch}(\xi_{\tau_{B(x, r)}})e_{\beta}(\tau_{B(x, r)}))],
$$
which implies $u_{ch}( x)\le\Pi_{x}(u_{ch}(\xi_{\tau_{B(x, r)}})e_{\beta}(
\tau_{B(x, r)}))$.

(2) Similarly we have, for $x\in B(0, R)$, that
$$
\begin{array}{rl}u_{ch}(
x)\le u_{ch}(R)\Pi_{x}(e_{\beta}(\tau_{B(0,R)})).\end{array}
$$
\qed

Note that $u_{ch}(x)$ is increasing in $c$. Let
\begin{equation}
u_{ch}(x)\uparrow u_{\infty}(x)=
-\log \P_{\delta_x}(\lim_{t\to\infty}\langle h,
X_t\rangle=0).
\end{equation}

\begin{lemma}\label{u-positive}
Either $u_{\infty}(x)\equiv 0$ or $u_{\infty}\in (0,\infty]$ in $\R^d$. That is, if
$$E_h:=\left\{\lim_{t\to\infty}\langle h, X_t\rangle=0\right\},$$
then either
$
\P_{\delta_x}(E_h)= 1,  \forall x\in\R^d,
$
or
$
\P_{\delta_x}(E_h)<1, \forall x\in\R^d.
$
\end{lemma}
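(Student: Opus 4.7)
The plan is to combine the branching--Markov structure of $X$ with absolute continuity of the one-dimensional distributions of the $L$-diffusion $\xi$. The key preliminary is the identity
$$
e^{-u_\infty(x)} = \P_{\delta_x}\bigl[\exp\langle -u_\infty, X_t\rangle\bigr], \qquad x\in\R^d,\ t>0, \qquad(\star)
$$
which I would establish as follows. Pointwise, $\mathbf{1}_{E_h} = \lim_{c\to\infty}\exp(-c\lim_{s\to\infty}\langle h, X_s\rangle)$, since on $E_h$ the inner limit equals $0$ while on $E_h^c$ it is strictly positive. Monotone convergence in $c$, applied to the Laplace functional identity $\P_\mu[\exp(-c\lim_{s\to\infty}\langle h, X_s\rangle)]=\exp(-\langle u_{ch},\mu\rangle)$ (itself obtained by letting $t\to\infty$ in the superprocess Laplace functional and using $u_{ch}(t,\cdot)\downarrow u_{ch}$), yields $\P_\mu(E_h)=\exp(-\langle u_\infty,\mu\rangle)$ for every $\mu\in M(\R^d)$. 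Conditioning on $\mathcal{F}_t$ and applying the Markov property together with this identity, $\P_{\delta_x}(E_h\mid \mathcal{F}_t)=\P_{X_t}(E_h)=\exp\langle -u_\infty, X_t\rangle$, and taking expectations delivers $(\star)$.

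Now suppose $u_\infty(x_0)=0$ for some $x_0\in\R^d$. Evaluating $(\star)$ at $x=x_0$, the left-hand side equals $1$, which forces $\exp\langle -u_\infty, X_t\rangle = 1$ $\P_{\delta_{x_0}}$-a.s.\ and hence $\langle u_\infty, X_t\rangle = 0$ $\P_{\delta_{x_0}}$-a.s.\ for every $t>0$. Taking expectations and invoking the first-moment formula \eqref{Exp} with $f=\mathbf{1}_{\{u_\infty>0\}}$,
$$
0 = \P_{\delta_{x_0}}[X_t(\{u_\infty>0\})] = \Pi_{x_0}\bigl[e_\beta(t)\mathbf{1}_{\{u_\infty>0\}}(\xi_t)\bigr].
$$
Since $\beta\in\mathbf{K}(\xi)$ gives $e_\beta(t)>0$ $\Pi_{x_0}$-a.s., we deduce $\xi_t\in\{u_\infty=0\}$ $\Pi_{x_0}$-a.s. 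The uniform ellipticity and H\"older regularity of the coefficients of $L$ assumed in Section~1 ensure that the transition density of $\xi$ is strictly positive on $\R^d\times\R^d$, so the law of $\xi_t$ under $\Pi_{x_0}$ is equivalent to Lebesgue measure. Thus $\{u_\infty>0\}$ has Lebesgue measure zero.

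To upgrade this to vanishing everywhere, I would apply Jensen's inequality to $(\star)$: for arbitrary $x\in\R^d$,
$$
e^{-u_\infty(x)} \ge \exp\bigl(-\P_{\delta_x}\langle u_\infty, X_t\rangle\bigr) = \exp\bigl(-\Pi_x[u_\infty(\xi_t)e_\beta(t)]\bigr),
$$
hence $u_\infty(x)\le\Pi_x[u_\infty(\xi_t)e_\beta(t)]$. Since $\xi_t$'s law under $\Pi_x$ is absolutely continuous and $u_\infty=0$ Lebesgue-a.e., we have $u_\infty(\xi_t)=0$ $\Pi_x$-a.s., so $u_\infty(x)=0$. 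This gives $u_\infty\equiv 0$ whenever $u_\infty$ vanishes at a single point, which is the asserted dichotomy. The main technical subtlety is the justification of $(\star)$, which demands the double passage to the limit ($t\to\infty$ to obtain $u_{ch}$ from $u_{ch}(t,\cdot)$, then $c\to\infty$ to reach the tail event $E_h$) together with the branching-property form of the Laplace functional at a general initial measure; once $(\star)$ is available, the remainder is a clean absolute-continuity argument that bypasses any need for semicontinuity of $u_\infty$ or radial structure of the coefficients.
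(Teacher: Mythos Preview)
Your proof is correct and follows essentially the same approach as the paper: both hinge on the identity $e^{-u_\infty(x)}=\P_{\delta_x}[\exp\langle -u_\infty,X_t\rangle]$ (the paper takes $t=1$), the first-moment formula, and the absolute continuity of the law of $\xi_t$ with respect to Lebesgue measure. The only differences are cosmetic: the paper argues in two separate cases (positive on a set of positive measure $\Rightarrow$ everywhere positive; zero a.e.\ $\Rightarrow$ identically zero), whereas you run the contrapositive in one pass starting from a single zero $x_0$; and in the final step you invoke Jensen's inequality where the paper appeals directly to $(\star)$, but both routes are immediate once $\langle u_\infty,X_t\rangle=0$ a.s.\ is known.
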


\pf We first prove that if there exists a measurable
set $A\subset \R^d$  with positive Lebesgue measure such that
$u_{\infty}>0$ on $A$, then $u_{\infty}(x)>0$ for every $x\in\R^d$.
Indeed, for every $x\in\R^d$,
\begin{equation}\label{P-<h X_t>=02}\begin{array}{rl}
&\P_{\delta_x}(\lim_{t\to\infty}
\langle h, X_t\rangle=0)\\
=&\P_{\delta_x}(\P_{X(1)}(\lim_{t\to\infty}\langle h,
X_t\rangle=0))\\=&\P_{\delta_x}\exp\langle - u_{\infty},
X(1)\rangle.
\end{array}\end{equation}
 Note that
\begin{equation}\label{E-u-X(1)2}
\P_{\delta_x}\langle u_{\infty}, X(1)\rangle=
\Pi_{x}(u_{\infty}(\xi_1)e_{\beta}(1))>0.
\end{equation}
\eqref{P-<h X_t>=02} implies that
$\P_{\delta_x}(\lim_{t\to\infty}\langle h, X_t\rangle=0)<1$. Thus we have
$u_{\infty}(x)>0$.

Now we prove that if $u_{\infty}=0$ almost everywhere, then
$u_{\infty}\equiv 0$. By \eqref{E-u-X(1)2}, we know that $\P_{
\delta_x}\langle u_{\infty}, X(1)\rangle=0$, and thus $\langle
u_{\infty}, X(1)\rangle=0$, $\P_{\delta_x}$-a.s. By \eqref{P-<h
X_t>=02},
$$
\P_{\delta_x}(\lim_{t\to\infty}\langle
h, X_t\rangle=0)=1.
$$
Hence $u_{\infty}(x)=0$ for every $x\in\R^d$.
\qed
\subsection{Proof of Theorem \ref{main2}}
Since $\beta\in {\bf K}_\infty(\xi)$,
by the Gauge Theorem (see \cite[Theorem 2.2]{Song:2002} or \cite[Theorem 2.6]{Chen:2002}),
the assumption that $g_{\beta}$ is not identically infinite
implies that $g_{\beta}$ is bounded between two
positive numbers. By  \cite[Corollary 2.16]{Chen:2002}, we have
$$
\Pi_x\left[\sup_{0\le t\le\infty}e_{\beta}(t)\right]<\infty,\quad\forall x\in\R^d.
$$
By dominated convergence,
$$
g_{\beta}(x)=\lim_{R\to\infty}
\Pi_{x}(e_{\beta}(\tau_{B(0,R)})),\quad x\in\R^d.
$$
Take $h=g_{\beta}$. We know that $h$ is a bounded solution of
$(L+\beta) u=0$ and satisfies \eqref{bounded-h};
by Lemma \ref{u-positive} we only need to prove that if for every $x\in\R^d$,
$
\P_{\delta_{x}}\left(\lim_{t\to\infty}\|X_t\|=0\right)<1,$ then
\begin{equation}\label{*<infty2}\Pi_x\int^{\infty}_0e_{\beta}(s)k(\xi_s)\, \mathrm{d}s<
\infty,\quad x\in\R^d.\end{equation}

First note that the assumption
that $\P_{\delta_{x}}(\lim_{t\to\infty}\|X_t\|=0)<1,
x\in\R^d$ implies that $u_{ch}(x)=-\log
\P_{\delta_x}\exp(-c\lim_{t\to\infty}\langle h, X_t\rangle)>0$ for
every $x\in \R^d$.

Since  $u_{ch}(s, x)\ge u_{ch}(x)$ for every $s\in[0,t]$ and $x\in \R^d$,
by \eqref{u_ch-equi3}, we have
$$\Pi_x\int^t_0e_{\beta}(s)k(\xi_s)u^2_{ch}(\xi_s)\, \mathrm{d}s\le ch(x),\quad x\in\R^d.$$
Letting $t\to\infty$, we get
$$\Pi_x\int^{\infty}_0e_{\beta}(s)k(\xi_s)
u^2_{ch}(\xi_s)\, \mathrm{d}s\le ch(x),\quad x\in\R^d,$$ which can be rewritten
as
\begin{equation}
\label{D-int-u_c2}\int_{\R^d}G_{\beta}(x,y)k(y) u^2_{ch}(y)m(\mathrm{d}y)\le
ch(x),\quad x\in\R^d.
\end{equation}
Letting $R\to\infty$ in
\eqref{D-u_ch-above2}, one gets
$$
u_{ch}(x)\le h(x)\liminf_{R\to\infty}u_{ch}( R).
$$ Since $u_{ch}(x)>0$ and
$0<h(x)<\infty$, we have $\liminf_{R\to\infty}u_{ch}(R)>0$. Then
\eqref{D-int-u_c2} implies \eqref{*<infty2}. \qed

\section{Appendix A: Construction and path regularity}

{\bf Proof of Theorem \ref{existence}} \quad
Let $D_n, n\ge 1$, be a sequence of smooth bounded domains such that $D_n\uparrow \R^d.$
According to Dynkin [7], for each $n$,
the $(L|_{D_n}-\beta^{-},\beta^{+}\wedge n, k)$-superdiffusion
$(X^n_t, t\ge 0)$ exists, where $L|_{D_n}$ is the generator of the process $\xi$
 killed upon leaving $D_n$, and  $\beta^+$ and $\beta^-$ are the positive and
negative parts of $\beta$, respectively. Also note that $(X^n_t, t\ge 0)$ can be
regarded as an $(L|_{D_n},\beta\wedge n, k)$-superdiffusion.

Let $f$ be a positive bounded measurable function on $\R^d$. According to
Dynkin \cite{Dynkin:1993}, for each $n$, there exists a unique bounded solution $u_n$
to the following integral equation:
$$u_n(t,x)+\Pi_x\int^{t\wedge\tau_{n}}_0[-(\beta(\xi_s)\wedge n)
u_n(t-s, \xi_s)+k(\xi_s)u^2( t-s, \xi_s)]\mathrm{d}s=\Pi_x[f(\xi_t), t<\tau_{n}],$$
where $\tau_n$ is the first exit time of the diffusion $\xi$ from $D_n$. We rewrite
the above equation in the following form (according to a result similar to our Lemma 3.1):
\begin{equation}\label{int0}
u_n(t,x)+\Pi_x\int^{t\wedge\tau_{n}}_0e_{\beta^+\wedge n}(s)
[\beta^-(\xi_s)u_n(\xi_s, t-s)+k(\xi_s)u^2(\xi_s, t-s)]\mathrm{d}s=
\Pi_x[e_{\beta^+\wedge n}(t)f(\xi_t), t<\tau_{n}].
\end{equation}
 By the (weak) parabolic maximum
principle (see \cite[p. 128]{Lie} for example),  $u_n$ is increasing.
Let $u_n(t, x)\uparrow u(t, x)$ as $n\uparrow\infty$. Letting $n\to\infty$ in
the above integral equation,
we get
\begin{equation}\label{int}u(t,x)+\Pi_x\int^{t}_0e_{\beta^+}(s)[\beta^-(\xi_s)
u( t-s, \xi_s)+k(\xi_s)u^2( t-s, \xi_s)]\mathrm{d}s=\Pi_x[e_{\beta^+}(t)f(\xi_t)]
\end{equation}
The assumption that $\beta$ is in the Kato class implies that
$u(t, x)\le \Pi_x[e_{\beta^+}(t)f(\xi_t)]\le e^{c_1+c_2t}$ for some positive constants.

To see the minimality of $u$, let $v$ be an arbitrary nonnegative measurable solution
to (\ref{int}). By the (weak) parabolic maximum
principle, $v|_{D_{n}}\ge u_n$ for all $n\ge 1$, and thus $v\ge u$ on $\R^d$.

Equation \eqref{int} can be rewritten as
\begin{equation}\label{int2}u(t,x)+\Pi_x\int^{t}_0[-\beta(\xi_s)u(t-s,\xi_s )+k(\xi_s)u^2( t-s, \xi_s)]\mathrm{d}s=\Pi_x[f(\xi_t)].\end{equation}
Then following the arguments in Appendix A of Engl\"ander and Pinsky [9],
we can get the existence of our superdiffusion.$\hfill\square$

\begin{remark}\label{uniqueness}\rm If $k\in{\bf K}(\xi)$ as well, then using  Gronwall's lemma,
$u$ is the {\it unique}  solution (bounded on any finite interval) of the  integral equation \eqref{int2}.
\end{remark}

Before turning to the proof Theorem \ref{cadlag}, we remark  that
\cite[Appendix A]{Li:2011} explains some important concepts
(e.g. Ray cone, Ray topology) we will be working with, and that \cite[Chap. 5]{Li:2011}
discusses regularity properties of superdiffusions, using similar methods, albeit under different
assumptions on the nonlinear operator.

For the proof we first need a lemma.
The function $f$ is called\footnote{In \cite{Li:2011} a slightly different terminology is followed.}
{\it $\alpha$-supermedian} relative to $P^0_t$ for  $\alpha>0$, if  $e^{-\alpha t}P^0_tf\le f$
for $t\ge 0$.
\begin{lemma}
Assume that $\beta\in {\bf K}(\xi)$ satisfies $\beta\le B$  for some constant $B>0$, and $f$ is
$\alpha$-supermedian relative to $P^0_t$ for some $\alpha>0$. Then
for every $\mu\in M(\mathbb R^d)$,

(i)
$M_t:=e^{-(B+\alpha) t}\langle f,X_t\rangle$ is a $\mathbb P_{\mu}$-supermartingale.

(ii) $\mathbb P_\mu\left(\sup_{0\le r\le t, r\in \mathbb{Q}}\langle 1, X_t\rangle<\infty
\mbox{ for all }t>0\right)=1$.
\end{lemma}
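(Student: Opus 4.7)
For part (i), the plan is to use the conditional version of the expectation formula~\eqref{Exp}. By the Markov property at time $s$ together with the branching property,
\[
\mathbb{P}_\mu\bigl[\langle f,X_t\rangle \mid \mathcal{F}_s\bigr]
= \bigl\langle \Pi_{\cdot}\bigl[e_\beta(t-s)\,f(\xi_{t-s})\bigr],\,X_s \bigr\rangle.
\]
Since $\beta\le B$, one bounds $e_\beta(t-s)\le e^{B(t-s)}$ pointwise on paths, so the inner expectation is at most $e^{B(t-s)}P^0_{t-s}f(x)$; the $\alpha$-supermedian property then gives $P^0_{t-s}f\le e^{\alpha(t-s)}f$. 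Combining these two estimates inside the integral against $X_s$ and multiplying through by $e^{-(B+\alpha)t}$ yields $\mathbb{P}_\mu[M_t\mid\mathcal{F}_s]\le M_s$, which is (i).

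For part (ii), the key observation is that because $\xi$ is conservative (Remark~\ref{conserv}), $P^0_t 1\equiv 1$, so the constant function $f\equiv 1$ is $\alpha$-supermedian for every $\alpha>0$. Applying part (i) with this choice, $M_t:=e^{-(B+\alpha)t}\|X_t\|$ is a nonnegative $\mathbb{P}_\mu$-supermartingale with $\mathbb{E}_\mu M_0=\|\mu\|<\infty$. Doob's maximal inequality for nonnegative supermartingales, applied over the countable index set $[0,n]\cap\mathbb{Q}$ for each integer $n\ge 1$, gives
\[
\mathbb{P}_\mu\Bigl(\sup_{r\in[0,n]\cap\mathbb{Q}} M_r>\lambda\Bigr)\le \frac{\|\mu\|}{\lambda},
\]
so $\sup_{r\in[0,n]\cap\mathbb{Q}}M_r<\infty$ almost surely. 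Since $\|X_r\|=e^{(B+\alpha)r}M_r\le e^{(B+\alpha)n}M_r$ on $[0,n]$, this forces $\sup_{r\in[0,n]\cap\mathbb{Q}}\|X_r\|<\infty$ a.s.; intersecting the corresponding full-measure events over $n\in\mathbb{N}$ gives the conclusion stated.

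The only mildly delicate issue is justifying the conditional expectation identity at the start of (i): the formula~\eqref{Exp} is stated for bounded nonnegative $f$, so strictly speaking one should first apply it to $f\wedge N$, use monotone convergence in $N$, and invoke the Markov property of the constructed superdiffusion. Everything else is routine: the supermedian hypothesis feeds directly into the Feynman--Kac expectation, conservativeness of $\xi$ removes any need to exhibit a nontrivial supermedian function for (ii), and Doob's inequality handles the countable sup.
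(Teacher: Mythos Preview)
Your proof of part (i) is correct and essentially the same as the paper's: both bound $e_\beta(t-s)\le e^{B(t-s)}$ and then invoke the $\alpha$-supermedian property, with only cosmetic differences in presentation (the paper first reduces to checking $\mathbb E_\nu M_t\le\langle f,\nu\rangle$ for arbitrary $\nu$, whereas you work directly with the conditional expectation).

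For part (ii), your argument is correct but takes a genuinely different route from the paper. The paper does \emph{not} deduce (ii) from (i); instead it borrows the ``almost submartingale'' estimate \eqref{almost.submartingale} from the proof of Theorem~\ref{th1}, namely that for some $a\in(0,1)$, $\gamma>0$ and sufficiently small $T>0$ one has $\mathbb P_\mu[e^{\gamma r}\|X_r\|\mid\mathcal F_s]\ge a\,e^{\gamma s}\|X_s\|$ for $0\le s\le r\le T$, and then applies the modified Doob inequality (Lemma~\ref{modified.Doob}) over rationals in $[0,T]$, finally splitting $[0,\infty)$ into intervals of length $T$. Your approach is more economical: you observe that conservativeness of $\xi$ makes $f\equiv 1$ supermedian, so part (i) already hands you a nonnegative supermartingale $e^{-(B+\alpha)t}\|X_t\|$, and the standard supermartingale maximal inequality over the countable set $[0,n]\cap\mathbb Q$ finishes the job without any appeal to the Kato-class short-time estimate or the modified Doob lemma. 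This is cleaner and more self-contained; the paper's detour through Theorem~\ref{th1} buys nothing extra here.
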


\pf
(i) It is easy to see that it suffices to check
\begin{equation}\label{need.this}
\mathbb P_{\nu}(M_t)\le M_0=\langle f,\nu\rangle,\ t>0,\ \forall\nu\in M(\mathbb R^d).
\end{equation}
This is because for $0\le s<t$, by the Markov property at time $s$,
$$
\mathbb P_{\mu}\left( e^{-B t}\langle f,X_t\rangle\mid \mathcal{F}_s\right)=
\mathbb P_{X_{s}} M_{t-s}e^{-(B+\alpha) s}\le\langle f,X_s
\rangle e^{-(B+\alpha)s}=M_s,
$$
where in the last inequality above we used (\ref{need.this}) with $\nu=X_s$.
Using the assumption that $f$ is $\alpha$-supermedian, we obtain
 $$\mathbb P_{\delta_{x}}M_t=e^{-(B+\alpha) t}(P^\beta_t f)(x)\le
e^{-\alpha t}P^0_tf(x)\le f(x).$$
Therefore (\ref{need.this}) holds.

(ii) By the proof of Theorem \ref {th1},  there are $a,\gamma>0$ and
a sufficiently small $T>0$
such that $M_r:=e^{\gamma t}\langle 1, X_r\rangle$ satisfies
$$
\mathbb P_{\mu}[M_r|{\cal F}_s]\ge a M_s, \quad 0\le s\le r\le T
\mbox{ with } r,s \in\mathbb{Q}.
$$ Then by Doob's inequality (Lemma \ref{modified.Doob} in discrete time),
$$
\mathbb P_\mu\left(\sup_{0\le r\le T,\, r\in \mathbb{Q}}\langle 1, X_r\rangle>K\right)
\le (aK)^{-1}\P_\mu M_t\le (aK)^{-1}e^{(\gamma+B)T}.
$$
Letting $K\uparrow 0$, we see that for any fixed $t>0$, $\mathbb P_\mu(\sup_{0\le r
\le T, \,r\in \mathbb{Q}}\langle 1, X_r\rangle=\infty)=0$. Since we can split $[0,\infty)$
to intervals of length $T$,  the result of (ii) holds.
\qed

{\bf Proof of Theorem \ref{cadlag}} \quad Let $(\overline{\mathbb R}^d,
\overline{{\cal B}(\mathbb R^d)})$ be the Ray-Knight compactification of
$(\mathbb R^d, {\cal B}(\mathbb R^d))$ associated with the semigroup
$\{P^0_t: t\ge 0\}$
and a suitably chosen countable Ray cone (see the last paragraph on \cite[p. 342]{Fitz}),
and let $M_r(\overline{\mathbb R}^d)$ be the space of finite measures on
$\overline{\mathbb R}^d$ with the weak Ray topology. Suppose $W$ is the
space of right continuous paths from $[0,\infty)$ to $M_r(\mathbb R^d)$
with left limits in $M_r(\overline {\mathbb R}^d)$, where $M_r(\mathbb R^d)$
carries the relative topology inherited from $M_r(\overline{\mathbb R}^d)$. We
write $\tilde X=(\tilde X_t, t\ge 0)$ for the coordinate process on $W$ and put
${\cal G}=\sigma\{\tilde X_t; t\ge 0)$.  Using the above lemma, the argument
in the proof of
\cite[Theorem 2.11]{Fitz} is applicable to our setup, so for any given
$\mu\in M(\mathbb R^d)$ there exists a unique probability measure
${\bf P}_\mu$ on $(W, {\cal G})$ such that ${\bf P}_\mu(\tilde X_0=\mu)=1$ and
$(\tilde X_t, t\ge 0)$ under ${\bf  P}_\mu$ has the same law as the
superprocess $X$ under $\mathbb P_\mu$.

As before, let $M(\mathbb R^d)$ denote the space of finite measures on $\mathbb R^d$ with the weak topology, induced by the mappings $\langle f, \tilde X_t\rangle$
as $f$ runs through the bounded continuous functions on $\mathbb R^d$.
(The Borel $\sigma$-algebras on $M_r(\mathbb R^d)$ and on $M(\mathbb R^d)$
both coincide with ${\cal M}$.) Since the diffusion process $\xi$ is continuous,
using the arguments of \cite[Section 3]{Fitz}, we have that if $f$ is a bounded
continuous function on $\mathbb R^d$, then $\langle f, \tilde X_{\cdot}\rangle$
is right continuous on $[0,\infty)$ almost surely; and if $f(\xi_{\cdot})$ has left
limits on $[0,\infty)$ almost surely, then so does $\langle f, \tilde X_{\cdot}\rangle$.
That is to say, $\tilde X$ is a $c\grave{a}dl\grave{a}g$ process on
the state space $M(\mathbb R^d)$.
\qed

\section{Appendix B: Review on Feynman-Kac semigroups and Gauge Theory}\label{sec-FK}

Recall that $\beta$ is in the Kato class ${\bf K}(\xi)$. In this appendix we present
some background material on the Feynman-Kac semigroup. Recall from Section 1 that
$$
P^\beta_tf(x):=\Pi_x[e_\beta(t)f(\xi_t)],
$$
and that $\{P^\beta_t, t\ge 0\}$ is a strongly
continuous semigroup on $L^p(\R^d, m)$ for  $1\le p<\infty$.

For any domain $D\subset\R^d$ and $x\in D$, we will use
$\delta_D(x)$ to denote the distance from $x$ to $D^c$:
$\delta_D(x):=\inf\{|x-y|: y\in D^c\}$.
Let $\xi^D$ be
the subprocess of $\xi$ killed upon exiting $D$.
It is well known that $\xi^D$ has a transition density $p_D(t, x, y)$
with respect to the Lebesgue measure.
We will use
$\{P^{\beta, D}_t, t\ge 0\}$ to denote the semigroup of $\xi^{D}$:
$$
P^{\beta, D}_tf(x):=\Pi_x[e_\beta(t)f(\xi_t), t<\tau_D],
$$
where
$$
\tau_D=\inf\{t>0: \xi_t\notin D\}.
$$

When $D^c$ is non-polar, that is, when $\Pi_x(\tau_D<\infty)$ is not
identically zero, $\xi^D$ is transient.
In this case, the function $G_D(x, y):=\int^\infty_0p_D(t, x, y)\, \mathrm{d}t$
is  well defined and is called  the Green's function of $\xi^{D}$ with respect to
the Lebesgue measure.
Then $\widetilde{G}_D(x, y):=G_{D}(x, y)/m(y)$
is the Green's function of $\xi^{D}$ with respect to $m(y)\mathrm{d}y$.

For any $n\ge 1$, put $D_n=B(0, n)$. We will use the shorthand $\xi^{(n)}$ to denote
$\xi^{D_n}$ and $G_{n}$ to denote $G_{D_n}$.
It follows from \cite{Hub-Sie, kim-son2} that $G_n$  is comparable to the
Green's function of the killed Brownian motion in $D_n$.
Therefore we have the following result.
\begin{proposition}\label{p:new1} There exists $c_1=c_1(n, d)>1$ such that when $d\ge 3$,
\begin{equation}\label{e:ge32}
c^{-1}_1\left(1\wedge\frac{\delta_B(x)\delta_B(y)}{|x-y|^2}\right)\le
G_{B}(x, y)\le c_1\frac1{|x-y|^{d-2}}\left(1\wedge
\frac{\delta_B(x)\delta_B(y)}{|x-y|^2}\right), \quad x, y\in B
\end{equation}
for any ball $B\subset D_n$; when $d=2$
\begin{equation}\label{e:ge22}
c^{-1}_1\log\left(1+\frac{\delta_B(x)\delta_B(y)}{|x-y|^2}\right)\le
G_{B}(x, y)\le
c_1\log\left(1+\frac{\delta_B(x)\delta_B(y)}{|x-y|^2}\right), \quad
x, y\in B
\end{equation}
for any ball $B\subset D_n$; and when $d=1$
\begin{equation}\label{e:ge12}
c^{-1}_1(\delta_B(x)\wedge \delta_B(y))\le G_B(x, y)\le
c_1(\delta_B(x)\wedge \delta_B(y)), \quad x, y\in B
\end{equation}
for any ball $B\subset D_n$.
\end{proposition}
\subsection{The 3G inequalities and the Martin kernel}

Recall that $u$ is defined by \eqref{e:gfn4bm}.
Using \eqref{e:ge32}--\eqref{e:ge12}, we can easily get the following.

\begin{proposition}[The 3G inequalities]\label{p:new2}
There exists $c=c(d,n)$ such that, when $d\ge 3$,
\begin{equation}\label{e:3g32}
\frac{G_{B}(x, y)G_{B}(y, z)}{G_{B}(x, z)}\le c(u(x-y)+u(y-z)),
\quad x, y, z\in B
\end{equation}
for any ball $B\subset D_n$; when $d=2$,
\begin{equation}\label{e:3g22}
\frac{G_B(x, y)G_B(y, z)}{G_B(x, z)}\le c[(1\vee u(x-y))+(1 \vee
u(y-z))], \quad x, y, z\in B
\end{equation}
for any ball $B\subset D_n$; and when $d=1$,
\begin{equation}\label{e:3g12}
\frac{G_B(x, y)G_B(y, z)}{G_B(x, z)}\le c, \quad x, y, z\in B
\end{equation}
for any ball $B\subset D_n$.
\end{proposition}

\pf The $d\ge 3$ case follows from \cite[Theorem 6.5]{Chung:1995}, the
$d=2$ case follows from \cite[Theorem 6.15]{Chung:1995}, while $d=1$ follows
from direct calculation. \qed

The three inequalities in Proposition \ref{p:new2}
are called {\it 3G
inequalities}.
For any ball $B$ and $x_0\in B$, the Martin kernel $M_B(x, z), (x, z)\in B\times
\partial B,$ based at $x_0$ is defined by
$$
M_B(x, z):=\lim_{B\ni y\to z\in\partial B}\frac{G_B(x, y)}{G_B(x_0,
y)}.
$$
The base $x_0$ plays no essential role here.
One then can easily deduce the following result from the 3G inequalities above.
\begin{proposition}\label{p:new3}
There exists $c=c(d, n)>0$ such that, when
$d\ge 3$,
\begin{equation}\label{e:3g3k}
\frac{G_{B}(x, y)M_{B}(y, z)}{M_{B}(x, z)}\le c(u(x-y)+u(y-z)), \quad x, y\in B, z\in \partial B
\end{equation}
for every ball $B\subset D_n$; when $d=2$,
\begin{equation}\label{e:3g2k}
\frac{G_{B}(x, y)M_{B}(y, z)}{M_{B}(x, z)}\le c[(1\vee u(x-y))+(1 \vee u(y-z))],
\quad x, y\in B, z\in \partial B
\end{equation}
for every ball $B\subset D_n$; when $d=1$,
\begin{equation}\label{e:3g1k}
\frac{G_{B}(x, y)M_{B}(y, z)}{M_{B}(x, z)}\le c, \quad x, y\in B, z\in \partial B
\end{equation}
for every ball $B\subset D_n$.
\end{proposition}

The following result is proved in \cite{kim-son1, kim-son2}.

\begin{proposition}\label{p:new4} For any $n\ge 1$, there exist $c_i=c_i(n)>1$, $i=1, 2$,
such that the transition density $p^{(n)}_t$ of $\xi^{(n)}$ with respect to the Lebesgue measure
satisfies
\begin{eqnarray}\label{e:dhke}
&&c^{-1}_1t^{-d/2}\left(1\wedge\frac{\delta_n(x)}{\sqrt{t}}\right)\left(1\wedge
\frac{\delta_n(x)}{\sqrt{t}}\right)
e^{-\frac{c_2|x-y|^2}{t}}\le p^{(n)}_t(x, y)\nonumber\\
&&\le c_1t^{-d/2}\left(1\wedge\frac{\delta_n(x)}{\sqrt{t}}\right)\left(1\wedge
\frac{\delta_n(x)}{\sqrt{t}}\right)
e^{-\frac{|x-y|^2}{c_2t}}
\end{eqnarray}
for all $(t, x, y)\in (0, 1]\times D_n\times D_n$.
\end{proposition}

We then have the following result.
\begin{proposition}\label{p:kato2kato}
If $\beta\in {\bf K}(\xi)$, then for any $n\ge 1$,
$$
\lim_{r\to 0}\sup_{x\in D_n}\int_{|y-x|<r}u(x-y)|\beta(y)|\mathrm{d}y=0.
$$
\end{proposition}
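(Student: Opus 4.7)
The plan is to reduce the assertion to the probabilistic Kato condition $\beta\in{\bf K}(\xi)$ by exploiting the two-sided heat-kernel estimate \eqref{e:dhke}. The key point is that integrating the Gaussian factor $s^{-d/2}e^{-c|x-y|^2/s}$ in $s$ from $0$ to $r$ produces, up to a multiplicative constant, the Newtonian singularity $u(x-y)$ whenever $|y-x|<r$ is small.

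First I would localize to the killed diffusion $\xi^{(n+1)}$. For $x\in D_n$ and $y$ with $|y-x|<r\le 1/4$, one has $\delta_{n+1}(x)\ge 1$ and $\delta_{n+1}(y)\ge 1/2$, so for every $s\in(0,r]$ the two boundary factors in the lower bound of \eqref{e:dhke} are both equal to $1$. That inequality then collapses to the pure Gaussian bound
\begin{equation*}
p^{(n+1)}_s(x,y)\;\ge\; c_1^{-1}\, s^{-d/2}\exp\!\bigl(-c_2|x-y|^2/s\bigr),\qquad s\in(0,r].
\end{equation*}

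Next I would establish, via the substitution $v=c_2|x-y|^2/s$, the pointwise lower bound
\begin{equation*}
\int_0^r s^{-d/2}\exp\!\bigl(-c_2|x-y|^2/s\bigr)\,\mathrm{d}s \;\ge\; c_d\, u(x-y)
\end{equation*}
for $|y-x|<r$ and all sufficiently small $r$. For $d\ge 3$ the remaining incomplete gamma $\int_{c_2|x-y|^2/r}^\infty v^{d/2-2}e^{-v}\,\mathrm{d}v$ is bounded below by a positive constant, which supplies the factor $|x-y|^{2-d}$. For $d=2$ the exponential integral $\int_{c_2|x-y|^2/r}^\infty v^{-1}e^{-v}\,\mathrm{d}v$ behaves like $\log(r/(c_2|x-y|^2))$, and the constraint $|x-y|<r$ absorbs the $\log r$ contribution, leaving at least a definite fraction of $\log(1/|x-y|)$. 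For $d=1$ the integral is at least a constant times $\sqrt{r}$, which dominates $|x-y|$ since $r\le 1$.

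Combining these two displays with $p_s(x,y)\ge p^{(n+1)}_s(x,y)$ and Tonelli's theorem yields
\begin{equation*}
c_d c_1^{-1}\int_{|y-x|<r} u(x-y)|\beta(y)|\,\mathrm{d}y \;\le\; \int_0^r\!\int_{\R^d} p_s(x,y)|\beta(y)|\,\mathrm{d}y\,\mathrm{d}s \;=\; \Pi_x\int_0^r|\beta(\xi_s)|\,\mathrm{d}s
\end{equation*}
for every $x\in D_n$. Taking the supremum over $x\in D_n$ and letting $r\downarrow 0$, the right-hand side tends to $0$ by the definition of ${\bf K}(\xi)$, and the proposition follows. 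The main technical obstacle is the dimension-by-dimension verification of the pointwise lower bound on the Gaussian integral; once that is in place, the rest is a quick application of heat-kernel domination and the Kato condition.
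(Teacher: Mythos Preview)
Your argument is correct and follows essentially the same route as the paper: both use the Gaussian lower bound on $p^{(n+1)}_s(x,y)$ coming from \eqref{e:dhke} (with the boundary factors trivialized for $x\in D_n$, $|y-x|<r$ small) and then convert the time-integrated Gaussian into the Newtonian kernel $u(x-y)$ to pass from the probabilistic Kato condition to the potential-theoretic one. The paper simply cites Chung--Zhao (Lemma~3.5 and Theorem~3.6) for this conversion, whereas you carry out the substitution $v=c_2|x-y|^2/s$ and the dimension-by-dimension case analysis explicitly; your write-up is therefore a fleshed-out version of the same proof. One cosmetic point: you do not actually need a transition density $p_s$ for the full diffusion $\xi$ in your final display---it is cleaner to bound $\Pi_x\int_0^r|\beta(\xi_s)|\,\mathrm{d}s\ge \int_0^r\int_{D_{n+1}}p^{(n+1)}_s(x,y)|\beta(y)|\,\mathrm{d}y\,\mathrm{d}s$ directly, as the paper does.
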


\pf It follows from \eqref{e:dhke} that there exist constants $c_1,
c_2>1$ such that for any $(t, x, y)\in (0, 1]\times D_{n}\times
D_{n}$,
$$
p^{(n+1)}_t(x, y)\ge c_1^{-1}\exp\left\{-\frac{c_2|x-y|^2}{t}\right\}.
$$
Since
$$
\int^t_0\Pi_x[|\beta(\xi_s)|]\mathrm{d}s\ge \int^t_0\int_{D_{n}}
p^{(n+1)}_s(x, y) |\beta(y)|\mathrm{d}y\mathrm{d}s,
$$
we can apply the arguments in the proof of \cite[Lemma 3.5]{Chung:1995} and the first
part of the proof of  \cite[Theorem 3.6]{Chung:1995} to get the conclusion of our proposition.
\qed
\subsection{Probabilistic representation of $\lambda_2$}
The following result is a generalization of \cite[Theorem
4.4.4]{Pinsky:1995} and it implies that \eqref{prob-lambda} is valid
when $\beta\in {\bf K}(\xi)$.

\begin{proposition}[Probabilistic representation of $\lambda_2$]\label{p:prob-lambda}
Let $\{D_n\}_{n\ge 1}$ be an increasing sequence of bounded domains with
$D_n\uparrow \mathbb R^d$ as $n\to\infty.$ If
$\tau_n:=\inf_{t\ge 0}\{t:\xi_t\not\in D_n\},\ n\ge 1,$ then
$$
\lambda_2(\beta)=\sup_n\lim_{t\to\infty}\frac1t\log\sup_{x\in D_n}
\Pi_x(e_\beta(t); t< \tau_n).
$$
\end{proposition}

\pf
Let $P^{\beta, n}_t$ stand for $P^{\beta, D_n}_t$ and let
$$
\lambda_2^n:=\lim_{t\to\infty}\frac1{t}\log \|P^{\beta, n}_t\|_{2, 2},
$$
where $\|P^{\beta, n}_t\|_{2, 2}$ stands for the operator norm of $P^{\beta, n}_t$ from $L^2(D_n, m)$
to $L^2(D_n, m)$.
It is well known (see, for instance, \cite{Chen:2011}) that
\begin{equation}\label{e:l2byf1}
-\lambda_2(\beta)=\inf\left\{\frac12\int_{\R^d}(\nabla f a\nabla f)
e^{2Q}\mathrm{d}x-\int_{\R^d}f^2\beta e^{2Q}\mathrm{d}x:
f\in C^\infty_c(\R^d), \|f\|_2=1\right\}
\end{equation}
and
\begin{equation}\label{e:l2byf2}
-\lambda_2^n(\beta)=\inf\left\{\frac12\int_{\R^d}
(\nabla f a\nabla f)e^{2Q}\mathrm{d}x-\int_{\R^d}f^2\beta e^{2Q}\mathrm{d}x:
f\in C^\infty_c(D_n), \|f\|_2=1\right\}.
\end{equation}

For any $n\ge 1$, by using \eqref{e:ge32}--\eqref{e:ge12} and
Proposition \ref{p:kato2kato} we can easily see that $\beta\in{\bf K}_\infty(\xi^{(n)})$
(The definition of the Kato class ${\bf K}_\infty(\xi^{(n)})$ is similar
to Definition \ref{Kinfty}; see \cite{Song:2002} for details.). Thus
it follows from \cite[Theorem 2.3]{Chen:2011} that for any $n\ge 1$,
$$
-\lambda_2^n(\beta)=\lim_{t\to\infty}\frac{1}{t}\log\sup_{x\in D_n}P^{\beta, n}_t1(x).
$$
Since $\lambda_2^n(\beta)\to \lambda_2(\beta)$, combining the above with
\eqref{e:l2byf1}--\eqref{e:l2byf2} yields
the conclusion of our proposition. \qed
\subsection{Properties of the gauge function}

Recall that the gauge function $g_\beta$  is defined in Definition \ref{def.of.Gauge.function}.
For any open set
$D\subset\R^d$ and nonnegative measurable function $f$ on $\partial D$, we define
$$
g^D_{\beta,
f}(x):=\Pi_x[e_{\beta}(\tau_D)f(\xi_{\tau_D})1_{\{\tau_D<\infty\}}], \quad x\in D.
$$
The Harnack-type inequalities in the following result will be used later.

\begin{lemma}\label{Gauge-positive} (1) For any open set
$D\subset\R^d$ and nonnegative measurable function $f$ on $\partial D$, if the function
$g^D_{\beta, f}$
is not identically infinite on $D$, then for any compact set $K$,
$g^D_{\beta, f}$ is bounded on $K$ and  there
exists $A=A(D, K, \beta)>1$, independent of $f$, such that
\begin{equation}\label{e:HI}
\sup_{x\in K}g^D_{\beta, f}(x)\le A \inf_{x\in K}g^D_{\beta, f}(x).
\end{equation}
Furthermore, $g^D_{\beta, f}$ is a continuous solution of
$(L+\beta) h=0$ in $D$ in the sense of distributions.

(2) If $g_\beta$ is not identically infinite in $\R^d$, then for
any compact set $K\subset \mathbb R^d$,  $g_\beta(x)$ is bounded on $K$
and there exists an $A=A(K, \beta)>1$ such that
\begin{equation}\label{e:HI2}
\sup_{x\in K}g_\beta(x)\le A \inf_{x\in K}g_\beta(x).
\end{equation}
Furthermore, $g_{\beta}$ is a continuous solution of
$(L+\beta) h=0$ in $\R^d$ in the sense of distributions.

(3) If $g_\beta$ is not identically zero in $\R^d$,
then $g_\beta(x) >0$  for all $x\in \R^d$.
\end{lemma}

\pf (1) The proof follows the same line
of arguments as that of \cite[Theorem 5.18]{Chung:1995}.
Without loss of generality, we may and do assume
that $K\subset B(0, n)$ and that there exists $x_1\in K$ such
that $g^D_{\beta, f}(x_1)<\infty$. Then, by the definition of $g^D_{\beta, f}$
and the strong Markov property, for any ball $B=B(x_1, r)\subset \overline{B(x_1, r)}\subset D$, we have
$$
g^D_{\beta, f}(x_1)=\Pi_{x_1}[e_\beta(\tau_B)g^D_{\beta,
f}(\xi_{\tau_B})].
$$
By \eqref{e:3g3k}--\eqref{e:3g1k} and Proposition \ref{p:kato2kato}, for any $\epsilon>0$,
we can choose $r_0=r_0(n, \beta)\in (0, 1]$ such that for any $r\in (0, r_0)$ and
any $(x, z)\in B\times \partial B$:
$$
\Pi^z_x\int^{\tau_B}_0e_{|\beta|}(t)\, \mathrm{d}t\le \frac12,
$$
where $\Pi^z_x$ stands for the law of the $M_B(\cdot,
z)$-conditioned diffusion, i.e., the process such that for all
bounded Borel function on $B$ and $t>0$,
$$
\Pi^z_x[f(\xi_t)]= \frac1{M_B(x, z)}\Pi_x[f(\xi_t)M_B(\xi_t, z);
t<\tau_B].
$$
Repeating the argument of \cite[Theorem 5.17]{Chung:1995}, we get
that
$$
\frac12\le \Pi^z_xe_\beta(\tau_B)\le 2.
$$
Put $v(x, z):=\Pi^z_xe_\beta(\tau_B)$, then  by \cite[Proposition
5.12]{Chung:1995} (which is also valid for $\xi$ by the same
arguments contained in \cite[Section 5.2]{Chung:1995}) we have
$$
g^D_{\beta, f}(x_1)=\int_{\partial B}v(x_1, z)K_B(x_1, z)g^D_{\beta,
f}(z)\,\sigma(\mathrm{d}z)
$$
where $\sigma$ stands for the surface measure on $\partial B$ and
$K_B$ is the Poisson kernel of $B$ with respect to $\xi$. It follows
from the Harnack inequality (applied to the harmonic functions of $\xi$)  that
there exists  some $c>1$ such that
$$
\sup_{x\in B(x_1, r/2)}K_B(x, z))\le c \inf_{x\in B(x_1, r/2)}K_B(x,
z),\qquad \forall z\in \partial B.
$$
Since, for $x\in B$,
$$
g^D_{\beta, f}(x)=\int_{\partial B}v(x, z)K_B(x, z)g^D_{\beta,
f}(z)\sigma(\mathrm{d}z),
$$
therefore we have
\begin{equation}\label{e: newHarnack}
\sup_{x\in B(x_1, r/2)}g^D_{\beta, f}(x)\le c \inf_{x\in B(x_1,
r/2)}g^D_{\beta, f}(x).
\end{equation}
Now \eqref{e:HI} follows from a standard chain argument.
In fact, for any compact subset $K$ of $D$, there exist $r\in (0, 1]$ and
an integer $N>1$ such that, for any $x, x'\in K$, there exists a subset
$\{y_i: i=1, \dots, l\}$, $1\le l\le N$,  with $\overline{B(y_i, r)}\subset D$, $i=1, \dots, l$, and
$$
|x-y_1|<\frac{r}2, \quad |y_i-y_{i+1}|<\frac{r}2, \quad i=1, \dots, l-1, \quad |x'-y_l|<\frac{r}2.
$$
Applying \eqref{e: newHarnack} repeatedly, we arrive at \eqref{e:HI}.
The last
assertion of (1) can be proved by repeating the argument of the
Corollary to \cite[Theorem 5.18]{Chung:1995} and we omit the
details.

(2) The proof of (2) is similar to that of (1).

(3) The proof of this part is similar to
that of \cite[Proposition
8.10]{Chung:1995} and we omit the details. \qed
\subsection{The operator $G^{\beta}$}
 For any  $f\ge 0$ on
$\R^d$, set
\begin{equation}\label{G-qf}
G^\beta f(x):=\Pi_{x}\int^{\infty}_0e_{\beta}(s)f(\xi_s)\, \mathrm{d}s.
\end{equation}
$G^0f$ will be denoted as $Gf$. The following result will be needed later.

\begin{lemma}\label{inequ-green} Suppose that $f\ge 0$  is locally bounded on $\R^d$.
If there exists an $x_1\in\R^d$ such that $G^\beta f(x_1)<\infty$, then $G^\beta f$
is locally bounded on $\R^d$.
\end{lemma}

\pf The proof is similar to that of the first part of Lemma
\ref{Gauge-positive}.
For convenience, we put $\widetilde f:=G^\beta f$ in this proof.
Without the loss of generality, we may and do assume
that the compact set $K$ satisfies $K\subset B(0, n)$, and furthermore, that
there exists an $x_1\in K$ such that $\widetilde f(x_1)<\infty$. Let $v(x,
z):=\Pi^z_xe_\beta(\tau_B)$. By the strong Markov property, for
any $B=B(x_1, r)$, we have
\begin{equation}\begin{array}{rl}
\widetilde
f(x_1)&=\Pi_{x_1}\displaystyle\int^{\tau_B}_0e_{\beta}(s)f(\xi_s)\mathrm{d}s+
\Pi_{x_1}\left[e_\beta (\tau_B)\Pi_{\xi_{\tau_B}}
\displaystyle\int^{\infty}_0e_{\beta}(s)f(\xi_s)\mathrm{d}s\right]\\
&=\Pi_{x_1}\displaystyle\int^{\tau_B}_0e_{\beta}(s)f(\xi_s)\mathrm{d}s
+\displaystyle\int_{\partial B}v(x_1,z)K_B(x_1, z)\widetilde
f(z)\sigma(\mathrm{d}z).\end{array}
\end{equation}
By \eqref{e:3g3k}--\eqref{e:3g1k}, Proposition \ref{p:kato2kato} and
the argument of \cite[Theorem 5.17]{Chung:1995}, for any
$\epsilon>0$, we can choose $r_0=r_0(n, \beta)\in (0, 1]$ such that
for any $r\in (0, r_0)$ and any $(x, z)\in B\times \partial B$:
$$
\frac{1}{2}\le \Pi^z_x[e_{\beta}(\tau_B)]\le
\Pi^z_x[e_{|\beta|}(\tau_B)]\le 2;\quad \Pi_{x}\tau_B^2\le 2;\quad
\Pi_{x}[e_{2|\beta|}(\tau_B)]\le 2.
$$
We then have
$$
\widetilde  f(x_1)\ge \frac{1}{2}\int_{\partial B}K_B(x_1,z)\widetilde
f(z)\sigma(\mathrm{d}z)
$$
and
$$\begin{array}{rl}
\widetilde  f(x)&=\Pi_{x}\displaystyle\int^{\tau_B}_0e_{\beta}(s)f(\xi_s)\mathrm{d}s
+\displaystyle\int_{\partial B}v(x,z)K_B(x,
z)\widetilde  f(z)\sigma(\mathrm{d}z)\\
&\le C\Pi_{x}(\tau_B e_{|\beta|}(\tau_B))+\displaystyle\int_{\partial
B}v(x,z)K_B(x, z)\widetilde  f(z)\sigma(\mathrm{d}z)\\
&\le
C[\Pi_{x}\tau_B^2]^{1/2}[\Pi_{x}[e_{2|\beta|}(\tau_B)]]^{1/2}+\displaystyle\int_{\partial
B}v(x,z)K_B(x, z)\widetilde  f(z)\sigma(\mathrm{d}z),\end{array}
$$
where $C$ is the upper bound of $f$ on $B$. It follows from the
Harnack inequality (for harmonic functions of $\xi$) that there exists some
$c>1$ such that
$$
\sup_{x\in B(x_1, r/2)}K_B(x, z))\le c \inf_{x\in B(x_1, r/2)}K_B(x,
z).
$$
Thus
$$
\sup_{x\in B(x_1, r/2)}\widetilde  f(x)\le 2C +4c \widetilde  f(x_1).
$$
Now the assertion of the lemma follows from
a standard chain argument, as was done in the proof of Lemma \ref{Gauge-positive}(1).
\qed

{\bf Acknowledgement}
The first author owes thanks to Zenghu Li for valuable discussions about path regularity
questions, and to Peking University for their  hospitality when visiting Y. Ren.
We also thank the two referees for several helpful comments and suggestions on the first version
of this paper.

\vskip 0.3truein

 \noindent {\bf J\'anos Engl\"ander:} Department of Mathematics,
University of Colorado,
Boulder, CO 80309-0395, U.S.A.
Email: {\texttt janos.englander@colorado.edu},

\noindent{\tt http://euclid.colorado.edu/~englandj/MyBoulderPage.html}

 \bigskip
\noindent{\bf Yan-Xia Ren:} LMAM School of Mathematical Sciences \& Center for
Statistical Science, Peking
University,  Beijing, 100871, P.R. China. Email: {\texttt
yxren@math.pku.edu.cn},

\noindent{\tt http://www.math.pku.edu.cn/teachers/renyx/indexE.htm}

\bigskip
\noindent {\bf Renming Song:} Department of Mathematics,
University of Illinois,
Urbana-Champaign, IL 61801, U.S.A.
Email: {\texttt rsong@math.uiuc.edu}, {\tt http://www.math.uiuc.edu/~rsong/}

\end{document}